\newtheorem{thm}{Theorem}[section]
\newtheorem{prop}[thm]{Proposition}
\newtheorem{lemma}[thm]{Lemma}
\newtheorem{cor}[thm]{Corollary}
\newtheorem{defn}[thm]{Definition}
\newtheorem{example}[thm]{Example}
\newtheorem{prop-defn}[thm]{Proposition/Definition}
\newtheorem{remark}[thm]{Remark}
\newtheorem{qus}[thm]{Question}
\newtheorem{conjecture}[thm]{Conjecture}
\begin{document}{\allowdisplaybreaks[4]

\title{$W$-translated Schubert divisors and transversal intersections}

\author{DongSeon Hwang }
\address{Department of Mathematics,
           Ajou University,  San 5, Woncheon-dong, Yeongtong-gu, Suwon 443-749, Korea}
\email{dshwang@ajou.ac.kr}
\thanks{ 
 }

\author{Hwayoung Lee }
\address{Department of Mathematics, University of California, Riverside, CA92521, USA}

\email{hwayoung.lee@ucr.edu}
\thanks{ 
 }

\author{Jae-Hyouk Lee }
\address{Department of Mathematics, Ewha Womans University, 11-1, Daehyun-Dong, Seodaemun-Gu, Seoul   120-750, Korea}
\email{jaehyoukl@ewha.ac.kr}
\thanks{ 
 }

\author{Changzheng Li}
 \address{School of Mathematics, Sun Yat-sen University, Guangzhou 510275, P.R. China;}
\email{lichangzh@mail.sysu.edu.cn}

\date{
      }




\begin{abstract}
We study the toric degeneration of Weyl group translated Schubert divisors of a partial flag variety $F\ell_{n_1, n_2, \cdots, n_k; n}$ via Gelfand-Cetlin polytopes. We propose a conjecture  that  Schubert varieties  of appropriate dimensions intersect transversally up to translation by Weyl group elements, and verify it in various cases, including complex Grassmannian $Gr(2, n)$ and complete flag variety $F\ell_{1,2,3; 4}$.
  \end{abstract}

\maketitle
 \tableofcontents

\section{Introduction}
 The partial flag variety $X=F\ell_{n_1, \cdots, n_k; n}$, parameterizing the space of partial flags $\{V_{n_1}\subseteq\cdots \subseteq V_{n_k}\subseteq \mathbb{C}^n\}$, is a smooth projective  $SL(n, \mathbb{C})$-homogeneous variety. The cohomology ring $H^*(X, \mathbb{Z})$ is torsion-free, and has a $\mathbb{Z}$-additive basis of Schubert classes $\sigma^u$ labelled  by a subset of the permutation group $S_n$ of $n$ objects.
The structure constants $N_{u_1,\cdots, u_m, u_{m+1}}^w$ in the cup product
   $$\sigma^{u_1}\cup\cdots\cup  \sigma^{u_{m+1}}=\sum_{ \ell(w)=\ell(u_1)+\cdots+\ell(u_{m+1})} N_{u_1,\cdots, u_{m+1}}^w\sigma^w$$
 are nonnegative integers, where $\ell: S_n\to \mathbb{Z}_{\geq 0}$ denotes the standard length function.
 One of the most central problems in Schubert calculus is finding a manifestly positive combinatorial formula of $N_{u_1, u_2}^w$, which determines a manifestly positive  formula for general  $N_{u_1,\cdots, u_{m+1}}^w$. However, this problem is widely open except for   two-step flag varieties $F\ell_{n_1, n_2; n}$ \cite{Coskun-twostep, BKPT,Buch-equivTwostep}, beyond the famous Littlewood-Ricardson rules for complex Grassmannians $Gr(m, n)=F\ell_{m;n}$. In addition, there is  a recent work \cite{KnZJ} on three-step flag varieties, and there has been a nice algorithm for the Schubert structure constants \cite{Duan} (with sign cancelation involved) even for flag varieties of general Lie type.    In the present paper, we propose a geometric approach towards
   a manifestly positive formula of $N_{u, v}^w$ for $F\ell_{n_1, \cdots, n_k; n}$.

 Let us start with the geometrical  meaning of the Schubert structure constants $N_{u, v}^w$. Inside $X$, there are   Schubert varieties $X_w$ of  complex  dimension $\ell(w)$, which are closed subvarieties of $X$ satisfying
 appropriate dimension conditions on  the  intersections of partial flags with an a priori fixed  complete flag in $\mathbb{C}^n$. There are opposite Schubert varieties $X^w$ of complex codimension $\ell(w)$ defined by imposing dimension conditions with respect to the complete flag opposite to the fixed one.
 Kleiman's Transversality Theorem \cite{Kleiman} says that  for generic $g, g'\in SL(n, \mathbb{C})$,   the varieties
    $g\cdot X^u, g'\cdot X^v$ and $ X_w$ intersect  transversally along a Zariski dense open subset of every component of their intersection. Moreover,       the transversal intersection  $g\cdot X^u\cap g'\cdot X^v\cap   X_w$ is a finite set of cardinality $N_{u, v}^w$, provided $\ell(w)=\ell(u)+\ell(v)$.
Along this direction, one may ask the following   question.
  \begin{qus}\label{question1}
   How to specify    a pair $(g, g')$ of elements in $SL(n, \mathbb{C})$ that satisfies the Kleiman's transversality for given $u, v, w$?
  \end{qus}
\noindent  It would be great if one could   characterize   the pair $(g, g')$ of generic elements and give a combinatorial description of the transversal intersection, which would result in a  manifestly positive   formula of $N_{u, v}^w$.
 The desired pairs $(g, g')$ exist almost everywhere. However, it is still like looking for a needle in a haystack if one really wants to specify one such $(g, g')$.
In this paper, we consider the   finitely many subvarieties $uX^v$ where $u\in S_n$, and call them   \textit{$W$-translated Schubert varieties}.
 We will study the transversality of intersections among them.  We remark that varieties of the form $g_1X^{u_1}\bigcap\cdots \bigcap g_mX^{u_m}$ for $g_1,\cdots, g_m\in SL(n, \mathbb{C})$ are called \textit{intersection varieties}
in \cite{BiCo}, whose singular loci were studied therein.

Our study of transversal intersections of $W$-translated Schubert varieties uses toric degenerations and Gelfand-Cetlin polytopes. This  was   inspired by  the  works \cite{Kiri}, \cite{KST} of Kiritchenko,  Smirnov and   Timorin, which     related Schubert calculus with Gelfand-Cetlin polytopes closely in an algebraic/representation-theoretic way, in addition to the early study in \cite{Kogan}.
 Algebraically,    the polytope ring \cite{PuKh,Timo} associated to the Gelfand-Cetlin polytope for a complete flag variety  $F\ell_{1,2,\cdots, n-1; n}$ is isomorphic to  $H^*(F\ell_{1,2,\cdots, n-1; n}, \mathbb{Z})$ \cite{Kaveh};
  descriptions of the Schubert classes modelled by such Gelfand-Cetlin polytope  through the associated polytope ring are given in    \cite{Kiri, KST}.
  Geometrically,
there are  toric degenerations of partial flag varieties to the toric varieties $X_\Delta$ of the Gelfand-Cetlin polytopes $\Delta$ \cite{GoLa,BCFKS,KoMi,KnMi, NNU}.
The standard (resp. opposite) Schubert varieties of the partial flag variety $X$ behave well with respect to the toric degeneration, in the sense that they degenerate to reduced unions of toric subvarieties of $X_\Delta$ \cite[Theorem 8 and Remark 10]{KoMi}.   The $W$-translated Schubert varieties  are  also expected to enjoy similar   nice degeneration properties. This leads to the strategy   that  \textit{ intersection problems in the generic fiber are likely transferred to  intersection problems of toric subvarieties of $X_\Delta$, and hence to that of faces of the Gelfand-Cetlin polytope $\Delta$}.  Indeed, as we will show in {Theorem  \ref{thmToricDeg}}, every $W$-translated Schubert divisor, which is defined by a positive path,
degenerates to the union of toric divisors, defined by effective edges that lie on the positive path.
 Each $W$-translated Schubert variety $uX^v$ is a scheme-theoretical intersections of $X$ with appropriate   $W$-translated Schubert divisors. Therefore it corresponds to a set-theoretic  intersection $\Delta(u, v)$ of unions of faces of $\Delta$ of codimension one. The above strategy for $X=F\ell_n$ can be achieved in the following way,
 which  builds a bridge   connecting geometry and combinatorics  with the help of toric degeneration.
 \begin{thm}\label{thmTransInterviatoric00}
    Let $v_1, \cdots, v_{m+1}, w\in S_n$ with $\ell(w)=\sum_{i=1}^{m+1}\ell(v_i)$, where $m\geq 1$. Suppose that there exist
    $u_1,\cdots, u_{m+1}\in S_n$ with $\mathcal{S}:=\bigcap_{i=1}^{m+1} \Delta(u_i, v_i) \bigcap \Delta(w_0, w_0w)$  consisting of part of the regular vertices of  $\Delta$ for $F\ell_n$.   Then  $u_1 X^{v_1},\cdots, u_{m+1}X^{v_{m+1}}$,  $X_w$ intersect transversally, and  $N_{v_1,\cdots, v_{m+1}}^w=\sharp \mathcal{S}$.
\end{thm}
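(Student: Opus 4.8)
The plan is to realize all of the relevant varieties as flat limits inside a single toric degeneration of $F\ell_n$ and then to reduce both the transversality and the enumeration to a purely local computation at regular vertices. First I would observe that the standard Schubert variety is itself $W$-translated: since $\ell(w_0w)=\binom{n}{2}-\ell(w)$ we have $\dim(w_0 X^{w_0w})=\ell(w)=\dim X_w$, and in fact $X_w=w_0 X^{w_0w}$, so $\Delta(w_0,w_0w)$ is precisely the face-datum attached to $X_w$. It therefore suffices to treat the $m+2$ varieties $u_1X^{v_1},\ldots,u_{m+1}X^{v_{m+1}},\,w_0 X^{w_0w}$ on equal footing. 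Next I would fix a toric degeneration $\pi\colon\mathcal{X}\to\mathbb{A}^1$ of $F\ell_n$ with special fibre $\mathcal{X}_0=X_\Delta$ and $\mathcal{X}_t\cong X$ for $t\neq 0$, arising from a $\mathbb{G}_m$-action so that $\pi$ is trivial over $\mathbb{A}^1\setminus\{0\}$. By Theorem~\ref{thmToricDeg} each $W$-translated Schubert divisor degenerates to the reduced union of the toric divisors attached to the effective edges on its positive path; writing each $u_iX^{v_i}$ as the scheme-theoretic intersection of $X$ with the appropriate $W$-translated Schubert divisors, I obtain a flat family $\mathcal{Y}_i\to\mathbb{A}^1$ with $\mathcal{Y}_{i,t}=u_iX^{v_i}$ and special fibre a reduced union of codimension-$\ell(v_i)$ faces supported on $\Delta(u_i,v_i)$, and likewise a family $\mathcal{Y}_{m+2}$ for $w_0X^{w_0w}$.

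The dimension bookkeeping is the engine of the argument: the codimensions add up correctly, since $\sum_i\ell(v_i)+\big(\binom{n}{2}-\ell(w)\big)=\binom{n}{2}=\dim X$ by the hypothesis $\ell(w)=\sum_i\ell(v_i)$, so the expected dimension of every fibre of $\mathcal{Y}:=\mathcal{Y}_1\cap\cdots\cap\mathcal{Y}_{m+2}$ is $0$. The set-theoretic special fibre of $\mathcal{Y}$ is contained in $\bigcap_i\Delta(u_i,v_i)\cap\Delta(w_0,w_0w)=\mathcal{S}$, which by hypothesis is a finite set of regular vertices, whence $\dim\mathcal{Y}_0=0$; upper semicontinuity of fibre dimension then forces $\dim\mathcal{Y}_t\le 0$, i.e. the honest intersection $Y:=\mathcal{Y}_t=u_1X^{v_1}\cap\cdots\cap X_w$ is finite.

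The central point is a local transversality analysis at the regular vertices. At such a vertex $p$ the toric variety $X_\Delta$ is smooth, so \'etale-locally $X_\Delta\cong\mathbb{C}^N$ with the toric divisors through $p$ becoming the coordinate hyperplanes; each special fibre $\mathcal{Y}_{i,0}$ is then, near $p$, a union of coordinate subspaces, exactly one of which (of codimension $\ell(v_i)$) passes through $p$. Since $\mathcal{S}$ is $0$-dimensional at $p$, these coordinate subspaces meet only at $p$, which for coordinate subspaces is equivalent to their defining index sets being pairwise disjoint and exhausting all $N$ coordinates; hence they meet transversally and the scheme-theoretic intersection $\mathcal{Y}_0$ is reduced of length $1$ at $p$. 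I would conclude $\mathcal{Y}_0=\mathcal{S}$ as reduced schemes of length $\sharp\mathcal{S}$; in particular $\mathcal{Y}_0$ has no embedded points and no component of $\mathcal{Y}$ lies in $\mathcal{X}_0$, so $\mathcal{Y}\to\mathbb{A}^1$ is finite flat near $0$. Flatness gives $\mathrm{length}(Y)=\mathrm{length}(\mathcal{Y}_0)=\sharp\mathcal{S}$, while openness of the reduced locus in a flat family propagates the reducedness of $\mathcal{Y}_0$ to the generic fibre, and since $\pi$ is trivial over $\mathbb{A}^1\setminus\{0\}$ every fibre $\mathcal{Y}_t\cong Y$ is then reduced — equivalently, $u_1X^{v_1},\ldots,u_{m+1}X^{v_{m+1}},X_w$ meet transversally in the smooth variety $X$. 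Finally, as $W$-translation does not change cohomology classes we have $[u_iX^{v_i}]=\sigma^{v_i}$, so the degree of the $0$-cycle $Y$ computes $\int_X\sigma^{v_1}\cup\cdots\cup\sigma^{v_{m+1}}\cap[X_w]=N_{v_1,\ldots,v_{m+1}}^w$; reducedness of $Y$ makes this degree equal to $\sharp Y=\mathrm{length}(Y)=\sharp\mathcal{S}$, yielding $N_{v_1,\ldots,v_{m+1}}^w=\sharp\mathcal{S}$.

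The main obstacle I anticipate is precisely the identification of the \emph{scheme-theoretic} special fibre $\mathcal{Y}_0$ with the reduced set $\mathcal{S}$: one must know both that all $W$-translated Schubert varieties degenerate to \emph{reduced} unions of faces (the extension of Theorem~\ref{thmToricDeg} and \cite{KoMi} beyond the standard and opposite cases) and that forming the scheme-theoretic intersection commutes with passage to the special fibre, i.e. that no excess or embedded intersection is concentrated over $t=0$. The regular-vertex hypothesis is exactly what rescues this: it confines any possible excess to isolated smooth points, where the degenerate pieces are coordinate subspaces and where transversality — hence the vanishing of the higher $\mathrm{Tor}$'s and the flatness of $\mathcal{Y}$ — can be verified directly. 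Everything downstream (semicontinuity, conservation of length in a flat family, openness of the reduced locus, and the cohomological degree computation) is then routine.
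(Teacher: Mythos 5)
Your overall strategy coincides with the paper's: replace $X_w$ by $w_0X^{w_0w}$, cut all $m+2$ varieties out of the degenerating family $\mathcal{X}$ by Pl\"ucker coordinate hyperplanes, observe that the special fibre of the total intersection $\mathcal{Y}$ is supported on the points $X_0(z)$, $z\in\mathcal{S}$, bound dimensions by semicontinuity, and conclude via multiplicity one at those points. The genuine gap is the step ``$\mathcal{Y}_0$ is a finite reduced set of points, hence no component of $\mathcal{Y}$ lies in $\mathcal{X}_0$ and $\mathcal{Y}\to\mathbb{A}^1$ is finite flat near $0$, so $\mathrm{length}(Y)=\mathrm{length}(\mathcal{Y}_0)=\sharp\mathcal{S}$.'' This inference is invalid: a component of $\mathcal{Y}$ can be an isolated point concentrated over $t=0$, with local ring of the form $\mathbb{C}[t]/(t^N)$; its fibre is still a \emph{reduced} point, yet it is invisible in $\mathcal{Y}|_{\mathbb{C}^*}$, so reducedness of $\mathcal{Y}_0$ does not exclude such vertical components. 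A toy model: for $\mathcal{X}=V(xy-t)\subset\mathbb{C}^2\times\mathbb{C}$ (a conic degenerating to two lines), the scheme $\mathcal{Y}=\mathcal{X}\cap\{x=0\}\cap\{y=0\}=V(x,y,t)$ has reduced, $0$-dimensional special fibre, but $\mathcal{Y}|_{t\neq 0}=\emptyset$, so the generic fibre has length $0$, not $1$. Nor can vertical components be excluded by dimension counting: the number of hyperplane equations cutting out $\mathcal{Y}$ is much larger than $\mathrm{codim}\,Y$, so Krull's theorem gives no lower bound on component dimensions. What the flat-limit argument (Proposition \ref{propHart}) honestly yields is only the one-sided inequality $\mathrm{length}(Y)\leq\mathrm{length}(\mathcal{Y}_0)=\sharp\mathcal{S}$.

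The paper closes this gap with an ingredient your proof never uses: for $F\ell_n$ the regular vertices are exactly the set $V^X$, i.e.\ the vertices $z$ for which the coordinate point $X_0(z)$ satisfies the straightening relations and therefore lies on $X$ itself, not merely on $X_0$ (this is the paper's characterization of $V^X$, together with Corollary \ref{coreqnvertices}). Since each $u_iX^{v_i}$ and $X_w$ is cut out of $X$ by the very same coordinate hyperplanes $\{p_I=0\}$, which are preserved by the trivialization $p_I\mapsto t^{N_I}p_I$, every $X_0(z)$ with $z\in\mathcal{S}$ is an honest point of $Y=\bigcap_i u_iX^{v_i}\cap X_w$, giving the reverse bound $\sharp Y\geq\sharp\mathcal{S}$. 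The two inequalities together force finiteness, reducedness (transversality) of $Y$, and $N_{v_1,\dots,v_{m+1}}^w=\sharp\mathcal{S}$; note that in the toy model above it is precisely the membership $X_0(z)\in X$ that fails. In your write-up, regularity of the vertices is instead spent on the claim that $X_\Delta$ is smooth at $p$ --- itself asserted without proof, and avoided by the paper, which works with the ambient equations ($\{p_{I_j}=0\}$ and $\{p_{J_i}p_{K_i}=0\}$ with $J_i\neq K_i$) at the coordinate points. Finally, your assertion that each $\mathcal{Y}_i$ is flat with reduced special fibre a union of codimension-$\ell(v_i)$ faces is exactly the ``expected'' degeneration property for general $W$-translated Schubert varieties that the paper explicitly does not prove (Theorem \ref{thmToricDeg} covers only divisors); fortunately your argument needs only the set-theoretic containment of special fibres, which does follow from the divisor case.
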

\noindent Here $w_0$ denotes the longest element of $S_n$, and we refer to  \textbf{Theorem \ref{thmTransInterviatoric}} for more precise statements for general $F\ell_{n_1, \cdots, n_k; n}$.
  We notice that all $W$-translated Schubert varieties $\{wX^u\}_{w\in W}$ represent the same Schubert class $\sigma^u$.
  In \cite{Kiri}, Kiritchenko assigned  to $\sigma^u$ different choices of faces of the Gelfand-Cetlin polytopes $\Delta$ for complete flag varieties in a  representation-theoretic way.
  Instead of the one-to-many correspondence between Schubert classes and faces of the polytope $\Delta$, we believe that it is more natural  to make a one-to-one correspondence between $W$-translated Schubert varieties and combinations of faces of the polytope, arising from the toric degeneration. This will probably fill the gap between geometry and the algebraic model in \cite{KST}. For instance, Remark 2.5 therein can be explained in this way (namely the toric divisor of the  combination of the two faces in the remark is the degeneration of the corresponding $W$-Schubert divisor, which should be treated as one single object since the very beginning from the viewpoint of geometry).
For the case of Schubert divisor classes, we will describe the expected  relationship precisely   in Remark \ref{geomexp}.

To explore an answer to Question \ref{question1}, we will construct a partition $\mathcal{U}$ of the set  $\{({u, v},w)\mid u, v, w\in S_n, \ell(w)=\ell(u)+\ell(v)\}$  in section \ref{covering}. Each triple $(u, v, w)$ represents a Schubert structure constant $N_{u, v}^w$. Therein we will also modify the partition a little bit to obtain $\hat {\mathcal{U}}$,  by adding elements of the form   $(u_1',\cdots, u_{m+1}', {w'})$ to each class $[({u, v},{w})]\in \mathcal{U}$. As we will see, the way of constructing $\hat{\mathcal{U}}$ ensures that Schubert structure constants represented by elements in a same class   are equal to each other, namely we have $N_{u, v}^w=N_{u_1',\cdots, u_{m+1}'}^{w'}$.
We make the following conjecture.
\begin{conjecture}\label{mainconj00}
   For any  $[({u, v},{w})]\in \hat{\mathcal{U}}$,  there exists $(u_1',\cdots, u_{m+1}',{w'})\in [({u, v},{w})]$ such that
     $\tilde u_1\cdot X^{u'_1}\cap\cdots\cap\tilde u_{m+1} \cdot  X^{u'_{m+1}}\cap X_{w'}$ is a  transversal intersection   for some
    $\tilde u_1,\cdots, \tilde u_{m+1}\in S_n$.

\end{conjecture}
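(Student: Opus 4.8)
The plan is to reduce the transversality assertion to the purely combinatorial criterion of Theorem \ref{thmTransInterviatoric00}, and then to solve that combinatorial problem by exploiting the freedom of representatives inside the class $[(u,v,w)]\in\hat{\mathcal U}$. Matching the notation, a representative $(u_1',\dots,u_{m+1}',w')$ together with Weyl elements $\tilde u_1,\dots,\tilde u_{m+1}$ (playing the role of the $u_i$ in Theorem \ref{thmTransInterviatoric00}) yields the combinatorial intersection
\[
\mathcal S=\bigcap_{i=1}^{m+1}\Delta(\tilde u_i,u_i')\ \cap\ \Delta(w_0,w_0w')
\]
of unions of codimension-one faces of the Gelfand-Cetlin polytope $\Delta$. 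By Theorem \ref{thmTransInterviatoric00} it suffices to exhibit one such choice for which $\mathcal S$ consists only of regular vertices of $\Delta$; transversality of $\tilde u_1X^{u_1'}\cap\cdots\cap\tilde u_{m+1}X^{u_{m+1}'}\cap X_{w'}$ and the equality $N_{u_1',\dots,u_{m+1}'}^{w'}=\sharp\mathcal S$ are then automatic, and since the class is built so that $N_{u,v}^w=N_{u_1',\dots,u_{m+1}'}^{w'}$, the resulting count is the desired one.

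First I would choose the representative in $[(u,v,w)]$ so that the factors carry small length, so that each $\Delta(\tilde u_i,u_i')$ is an intersection of only a few facet-unions and the positive paths (in the sense of Theorem \ref{thmToricDeg}) defining them overlap as little as possible; this \emph{spreading out} is exactly what the passage from $\mathcal U$ to $\hat{\mathcal U}$ is meant to provide. Second, I would use the translations $\tilde u_i$ to put these positive paths in general position: translating $X^{u_i'}$ permutes the coordinate directions of the Gelfand-Cetlin pattern and thereby slides the associated effective edges across $\Delta$. One then checks, face by face, that the iterated intersection never acquires a positive-dimensional face and never meets a singular vertex. For the cases at hand this is carried out by direct inspection: for $Gr(2,n)$ the polytope is combinatorially simple enough that the effective edges on each path and their common meeting points can be enumerated explicitly, while for $F\ell_{1,2,3;4}$ one lists the finitely many vertices of $\Delta$, separates the regular from the singular ones, and verifies that a suitable choice of $\tilde u_i$ avoids the singular locus. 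I would organize the verification by first cataloguing the singular vertices of $\Delta$ and then showing that, for each class, some representative has all of its paths missing them simultaneously.

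The hard part, and the reason the statement remains conjectural rather than a theorem, is the uniform control of regularity demanded by the last step. As $n$ and the codimensions grow, $\Delta$ acquires many singular vertices, and there is no a priori reason that finitely many Weyl translations, as opposed to a genuinely generic pair in $SL(n,\mathbb C)$, should suffice to push every point of $\mathcal S$ onto the regular locus while keeping $\mathcal S$ zero-dimensional and of the correct cardinality. Turning the case-by-case inspection into a structural statement, namely that spreading into small-length factors always buys enough flexibility to steer the positive paths clear of the singular vertices, is the central obstacle I expect to confront, and is presumably why only the cases $Gr(2,n)$ and $F\ell_{1,2,3;4}$ are settled here.
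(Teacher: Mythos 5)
Your proposal cannot be a complete proof, and you rightly say so: the statement is a conjecture, and the paper itself only verifies it in special cases. At the level of skeleton you and the paper agree exactly --- reduce transversality to the combinatorial criterion of Theorem \ref{thmTransInterviatoric00} (in its general form, Theorem \ref{thmTransInterviatoric}), exploit the freedom of representatives inside $[(u,v,w)]\in\hat{\mathcal U}$, and use $W$-translations $\tilde u_i$ to position the positive paths. The genuine divergence is in how the settled cases are verified. You propose to catalogue the singular vertices of $\Delta$ and steer all paths clear of them; the paper never does this. Instead it funnels everything algebraically into Grassmannian model cases: Proposition \ref{propreduction}, the identities \eqref{eqnid}, and property $(\star)$ reduce an arbitrary constant for $Gr(2,n)$ to a special constant $N_{\lambda',\mu'}^{\eta'}$ in some $Gr(d+2,n)$ (Theorem \ref{thmGr2n}), which is then handled by the explicit face computations of Propositions \ref{propsp1Grk}, \ref{propspeciala0Gr} and \ref{propPierGr2n}. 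Crucially, Lemma \ref{lemverticesGr} shows that for a Grassmannian \emph{every} vertex of the Gelfand-Cetlin polytope lies in $V^X$, so the regularity issue you place at the center of your plan is vacuous in those cases. Likewise $F\ell_4$ is not treated by enumerating the vertices of its six-dimensional polytope: the paper reduces every nonzero class, via the same identities, to constants for $Gr(1,4)$ and $Gr(2,4)$ that are already covered. So your plan is the right frame, but the paper's mechanism --- algebraic reduction to Chevalley/special constants where vertex-admissibility is automatic --- is more concrete and deliberately sidesteps the ``uniform control of regularity'' obstacle you identify; that obstacle only truly appears for flag varieties that do not reduce to Grassmannian cases, which is precisely where the conjecture remains open (and where the paper, e.g.\ in its $Gr(3,6)$ and $F\ell_6$ example, needs $m>1$ and regular vertices in an essential way).
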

\noindent We remark that $m=1$ is usually taken,  while $m>1$ is sometimes   necessary as we will see in the example for $Gr(3, 6)$  in section 4.4.
 If  Conjecture \ref{mainconj00} is true, then we will obtain a Littlewood-Richardson rule by  {Theorem \ref{thmTransInterviatoric}}. On one hand, we could reduce the searching of $(g, g')$ in a Zariski open dense subset of $SL(n, \mathbb{C})^2$ in Question \ref{question1} to a small pool of finitely many possibilities, provided the conjecture held. On the other hand, the number of $W$-translated Schubert divisors that define  a $W$-translated Schubert variety   is strictly greater than the codimension of such variety in general. As a price, the study of the corresponding  set-theoretic intersections of faces of the Gelfand-Cetlin polytope is simple in principle, but complicated in practice.
 We will study some concrete cases in section 4, and obtain the following.
\begin{thm}
   Conjecture \ref{mainconj00} holds if the Schubert structure constant $N_{u, v}^w$ indexed  by $(u, v, w)$  is either {\upshape $(\mbox{i})$} a Chevalley type structure constant for $Gr(m, n)$ or  {\upshape $(\mbox{ii})$} an arbitrary structure constant for $Gr(2, n)$ or $F\ell_{4}$.
\end{thm}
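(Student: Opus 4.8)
The plan is to reduce, in every case, the transversality assertion of Conjecture \ref{mainconj00} to the purely combinatorial criterion supplied by Theorem \ref{thmTransInterviatoric} (equivalently Theorem \ref{thmTransInterviatoric00} in the complete-flag case). Having fixed a representative $(u_1',\dots,u_{m+1}',w')$ of the given class, one seeks Weyl translations $\tilde u_1,\dots,\tilde u_{m+1}\in S_n$ so that the set-theoretic intersection $\mathcal{S}=\bigcap_{i=1}^{m+1}\Delta(\tilde u_i,u_i')\cap\Delta(w_0,w_0w')$ of unions of codimension-one faces of the Gelfand-Cetlin polytope $\Delta$ consists only of regular vertices; the equality $\sharp\mathcal{S}=N_{u,v}^w$ then follows automatically, since by the construction of $\hat{\mathcal{U}}$ all representatives in one class carry the same structure constant. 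Once this reduction is in place, the proof becomes a matter of exhibiting, for each of the three families, the right positive paths (hence, via Theorem \ref{thmToricDeg}, the right face-unions) and of counting regular vertices in their intersection.

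For the Chevalley case I would take $m=1$ with the second factor a divisor class, so that the relevant $W$-translated Schubert variety is a single $W$-translated Schubert divisor. By Theorem \ref{thmToricDeg} this divisor degenerates to the union of toric divisors indexed by the effective edges lying on the positive path attached to it. The Chevalley formula predicts $N_{u,v}^w\in\{0,1\}$, the nonzero values being governed by the covering relations in the Bruhat order. The task is then to match these covering relations with the incidences between effective edges and the face $\Delta(w_0,w_0w)$ dual to $X_w$: one shows that when Chevalley returns $1$ exactly one regular vertex survives in $\mathcal{S}$, and that $\mathcal{S}=\varnothing$ otherwise. Since only one positive path is involved, this verification should go through uniformly in $m$ and $n$.

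For $Gr(2,n)$ I would work with the explicit Gelfand-Cetlin polytope, whose facets and regular vertices admit a clean description in terms of two-row Gelfand-Cetlin patterns. Schubert classes are indexed by partitions $\lambda\subseteq (n-2)^2$, and the relevant Littlewood-Richardson numbers are classically understood, all being equal to $0$ or $1$; translating each class into its face-union and intersecting, one checks the count uniformly in $n$ thanks to the low complexity of the two-row geometry. For $F\ell_4=F\ell_{1,2,3;4}$ the polytope is six-dimensional with finitely many faces, so the verification reduces to a finite exhaustive check over all triples $(u,v,w)$ with $\ell(w)=\ell(u)+\ell(v)$; where a single pair ($m=1$) fails to yield a clean intersection of regular vertices, I would pass to an $m>1$ decomposition inside the same class $[({u,v},w)]\in\hat{\mathcal{U}}$, exactly as is forced in the $Gr(3,6)$ example.

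The main obstacle is showing that $\mathcal{S}$ consists \emph{only} of regular vertices, i.e.\ ruling out both non-regular (singular) vertices and positive-dimensional excess components in the set-theoretic intersection of the face-unions. A priori the degenerated cycles meet in more than the expected dimension, and the role of the translations $\tilde u_i$ (and, where needed, of the extra factors with $m>1$) is precisely to shear the face-unions into sufficiently general position that only isolated regular vertices remain. This is immediate for the single-divisor Chevalley case and for the two-row geometry of $Gr(2,n)$, but for $F\ell_4$ it is the crux: for each offending triple one must exhibit a decomposition in $\hat{\mathcal{U}}$ that removes the excess and recovers $\sharp\mathcal{S}=N_{u,v}^w$, and it is here that the bulk of the finite but delicate casework resides.
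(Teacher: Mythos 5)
Your general framework---reducing everything to the vertex criterion of Theorem \ref{thmTransInterviatoric}---matches the paper, and your Chevalley sketch is in the same spirit as Proposition \ref{propsp1Grk} (though the real content there is the concrete choice of translation: the divisor is translated by $w_\mu$ itself, so that its positive path becomes $\pi_\mu$ and each effective edge on it meets $F_\mu\cap F_\eta^\vee$ either emptily or in one single vertex). The genuine gap is in part (ii). For $Gr(2,n)$ you propose to verify all triples directly inside $Gr(2,n)$, and for $F\ell_4$ you propose an exhaustive direct check with $m>1$ decompositions as a fallback; in both cases the crux---showing that, for a suitable translation, the set-theoretic intersection of face-unions consists only of regular vertices---is asserted (``one checks the count uniformly in $n$'') rather than proved. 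This is precisely the step the paper does \emph{not} attempt, and with good reason: its own $Gr(3,6)$ example shows that for some triples $uX^\mu\cap vX^\mu\cap X_\eta$ has positive dimension for \emph{every} choice of translations $u,v$, so there is no a priori reason the direct check you describe can succeed for all two-row triples or for all triples in $F\ell_4$.

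What the paper does instead, and what your plan is missing, is to use the identities defining the classes of $\hat{\mathcal{U}}$ as the primary reduction mechanism rather than a fallback. For $Gr(2,n)$ (Theorem \ref{thmGr2n}): the Pieri rule of Corollary \ref{corPierGr2n} (itself proved via transversality of translates $C^k\cdot X^{(1,1)}$ by powers of the cyclic permutation, using Proposition \ref{proptorforGr2n}) reduces $N_{\lambda,\mu}^\eta$ to $N_{(a,0),(b,0)}^{(c,d)}$, and then repeated application of the recurrence $N_{u,v}^w=N_{us_i,v}^{ws_i}$ (Proposition \ref{propreduction}) converts this into a structure constant for \emph{special} partitions in a \emph{different} Grassmannian $Gr(d+2,n)$, where Proposition \ref{propspeciala0Gr} exhibits an explicit translation giving a transversal single-point intersection. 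Similarly, for $F\ell_4$ every nontrivial triple is reduced by these identities to base constants for $Gr(1,4)$ and $Gr(2,4)$ that are already settled; no direct polytope computation is performed for general $F\ell_4$ triples at all. Without this reduction step, your proposal leaves the hardest part of the theorem unaddressed.
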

\noindent We remark that there have also been earlier works of  Schubert structure constants for Grassmannians by studying the transversality of intersections of Schubert varieties
    in different  ways  \cite{Sott-transversal,Sott}  or by different degeneration methods   \cite{Vakil}.

As one byproduct of the study of toric degeneration,  we give explicit defining equations for certain toric subvarieties of $X_\Delta$, including all the toric divisors in \textbf{Proposition \ref{proptoricdivisor}}.
As another byproduct, we  give a combinatorial description of    an   anti-canonical divisor $-K_X$ of $X$ in \textbf{Proposition \ref{propAntiCano}}, using the notions of special paths. Such anti-canonical divisor degenerates to that of the Gelfand-Cetlin toric variety $X_0$ and
 is potentially useful in the study of    Strominger-Yau-Zaslow
 mirror symmetry \cite{SYZ} for a partial flag variety $X$. For instance, Chan, Leung and the fourth named author constructed a special Lagrangian fibration for the open Calabi-Yau
 manifold $X\setminus -K_X$ for the two-step flag variety $F\ell_{1, n-1;n}$ \cite{CLL}, which is one key step in the study of SYZ mirror symmetry.

The present paper is organized as follows. In  section 2, we review basic facts on  Gelfand-Cetlin toric varieties.
 In section 3, we introduce the notion of $W$-translated Schubert varieties, and study the toric degeneration of certain $W$-translated Schubert varieties.
 In section 4, we   study  transversal   intersections of $W$-translated Schubert varieties by using toric degenerations in various examples. Finally in section 5, we specify an anti-canonical divisor of $X$.

\section{Gelfand-Cetlin  polytopes  and toric  varieties}

Throughout this paper, we fix a sequence  $0=n_0<  n_1<n_2<\cdots<n_k<n_{k+1}=n$ of integers, together with
a decreasing  sequence of nonnegative integers:
$$\lambda=\{\lambda^{(n)}_1=\lambda^{(n)}_2=\cdots =\lambda^{(n)}_{n_1}>\lambda^{(n)}_{n_1+1}=\cdots=\lambda^{(n)}_{n_2}>\cdots>\lambda^{(n)}_{n_k+1}=\cdots =\lambda^{(n)}_n\}. $$
 In this section, we  review   notions and facts on Gelfand-Cetlin polytopes and toric varieties. We mainly follow    \cite{BCFKS,KoMi} while give the descriptions in a uniform language by ladder diagrams and positive paths, which  will take   advantages in the later study of transversal intersections.
In what follows, a unit square is called a box.

\begin{defn}
   Let  $Q$ be an $n\times n$ square, i.e. a square consisting of $n^2$ boxes, and $Q_l$ be  squares   of size $(n_l-n_{l-1})\times (n_l-n_{l-1}) \, (l=1, 2, \cdots, k+1)$ placed on the diagonal of $Q$ in the lower right direction. The  \textbf{ladder diagram} $\Lambda:=\Lambda(n_1, \cdots, n_k; n)$ is the set of boxes   below  the diagonal squares $Q_l$'s.
    \end{defn}

 \begin{figure}[h]
  \caption{The ladder diagram $\Lambda$ and its associated dual graph $\Gamma$}\label{LDforFl}
  \bigskip
      \includegraphics[scale=1]{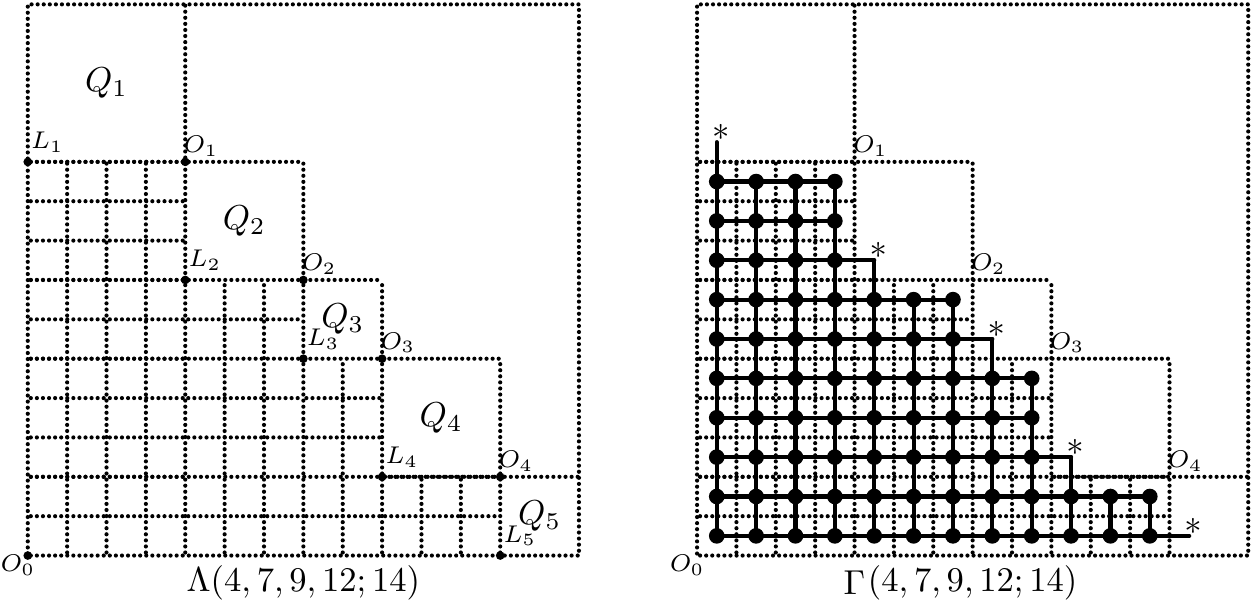}
  \end{figure}

 For each index $l$, we denote the lower right (resp. lower left)   vertex  of $Q_l$ as $O_l$ (resp. $L_l$) as in  Figure \ref{LDforFl}. We denote by   $O_0$  the lower left   vertex   of $\Lambda$ (or of $Q$).

 \begin{defn}
  We refer to the upper right side of the boundary
    of $\Lambda$ as the \textbf{roof} of $\Lambda$. An edge of a box is said to be   \textbf{effective}, if it is either  in the interior of $\Lambda$  or  on the roof     meeting some  vertex $L_i$.
   \end{defn}

\begin{remark}
One can associate, to a ladder diagram $\Lambda$, a graph $\Gamma$ by using so-called extended plane duality introduced in \cite[Definition 2.1.3]{BCFKS}.
\end{remark}

Each effective edge of $\Lambda$ defines a facet of the associated Gelfand-Cetlin polytope  to be described below. Recall that $\{\lambda^{(n)}_j\}_j$ are fixed already.
\begin{defn}
An array $\{\lambda_j^{(i)}\}_{1\leq j\leq i\leq n}$ of real numbers is called a Gelfand-Cetlin pattern for $\lambda$ if  all the inequalities $\lambda^{(i+1)}_j\geq \lambda_j^{(i)}\geq \lambda^{(i+1)}_{j+1}$ hold for $1\leq j\leq i\leq n-1$.
The associated Gelfand-Cetlin polytope, denoted as $\Delta_\lambda$ or simply   $\Delta$, is the convex hull of all integer Gelfand-Cetlin patterns for $\lambda$ in $\mathbb{R}^{n(n+1)\over 2}$.
 \end{defn}
   Every Gelfand-Cetlin pattern can be put in the boxes below the diagonal boxes of $Q$. 
    Precisely, we assign  $\lambda_j^{(i)}$ in a   box along the direction as illustrated in Figure  \ref{GCpic}. In particular, all the boxes inside   square $Q_l$ are assigned to the constant quantity $\lambda^{(n)}_{n_l}$ for each $1\leq l\leq k+1$, due to the inequalities. We have  $$\dim_\mathbb{R}\Delta=\sum_{i=1}^k(n_i-n_{i-1})(n-n_i),$$ which  equals the number of boxes in the ladder diagram $\Lambda(n_1, \cdots, n_k; n)$.
  \begin{figure}[h]
    \caption{Gelfand-Cetlin patterns for $\Lambda{(3,5;8)}$}\label{GCpic}
    \includegraphics[scale=1]{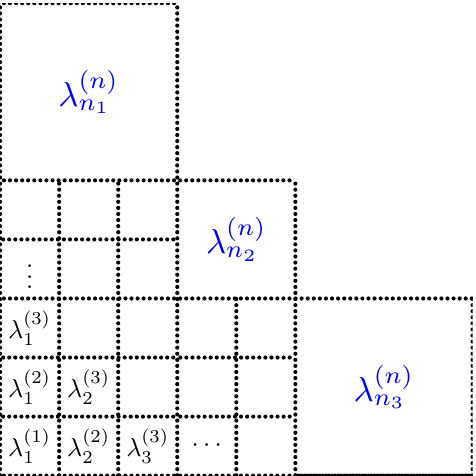}
  \end{figure}

 \noindent Clearly,  the following are equivalent:
   \begin{enumerate}
     \item An edge  $e$ in $\Lambda$   is effective;
     \item Each edge $e$ of $\Lambda$ presents an equation $a_e = b_e$, where $a_e$, $b_e$ are the assigned quantities of the two boxes or squares containing $e$.  Here, the equation $a_e=b_e$ defines a facet $F_e$ of $\Delta $ (i.e., $F_e:=\Delta \cap \{a_e=b_e\}$ is a face of codimension one).
   \end{enumerate}
The polytope $\Delta$ defines a normal toric variety $X_{\Delta}$ together with its projective embedding, the so-called  \textit{Gelfand-Cetlin   toric variety}. Moreover, it is well known that faces $F$ of $\Delta$ are in one-to-one correspondence with toric subvarieties $X_{\Delta(F)}$ of $X_{\Delta}$. Here by a toric subvariety, we always assume it to be reduced and   irreducible, and refer to \cite{CLS} for the background on toric varieties.
\subsubsection*{Defining equations of $X_\Delta$}
The Gelfand-Cetlin toric variety $X_\Delta$ can be described in terms of explicit defining equations, as we will review below.
\begin{defn}
 A \textbf{positive path}  $\pi$   on the ladder diagram $\Lambda(n_1,\cdots, n_k; n)$ is a path  starting   at the lower left vertex $O_0$ and moving either upward   or to the right  along edges, towards  one of $O_j, 1\leq j\leq k$. We define a partial order on the set of  positive paths    as follows:      ${\pi'}\leq {\pi}$  if    $\pi$ runs above   $\pi'$.

 A pair $(\pi, {\pi'})$ is called  {incomparable}, if neither $\pi\leq {\pi'}$ nor ${\pi'}\leq \pi$ holds.
 \end{defn}

We denote by $\pi_{i_1, \cdots, i_{l}}$   the positive path $\pi$ being horizontal exactly at the $i_1$, $i_2, \cdots, i_{l}$-th steps.
 Given   $I=[i_1, \cdots, i_\ell]$ and $J=[j_1, \cdots, j_m]$, it follows   from the definition that   $\pi_I\leq \pi_J$ if and only if $\ell\geq m$ and $i_r\leq j_r$ for
 $r=1, \cdots, m.$
 \begin{example}
   For $\Lambda({4,7,9,12; 14})$ as in Figure \ref{PPforFl}, we have  ${\pi}=\pi_{4,5,8,9,10, 13,14}$,
${\pi'}=\pi_{1,2,5,6,7,8,11,12,14}$, and ${\pi''}=\pi_{1,2,3,4,5,6,7,8,14}$.  Moreover,   ${\pi}\geq {\pi'}\geq {\pi''}$.
\end{example}

\begin{figure}[h]
  \caption{Positive paths and the associated exponent vectors}\label{PPforFl}
  \bigskip
     \includegraphics[scale=0.8]{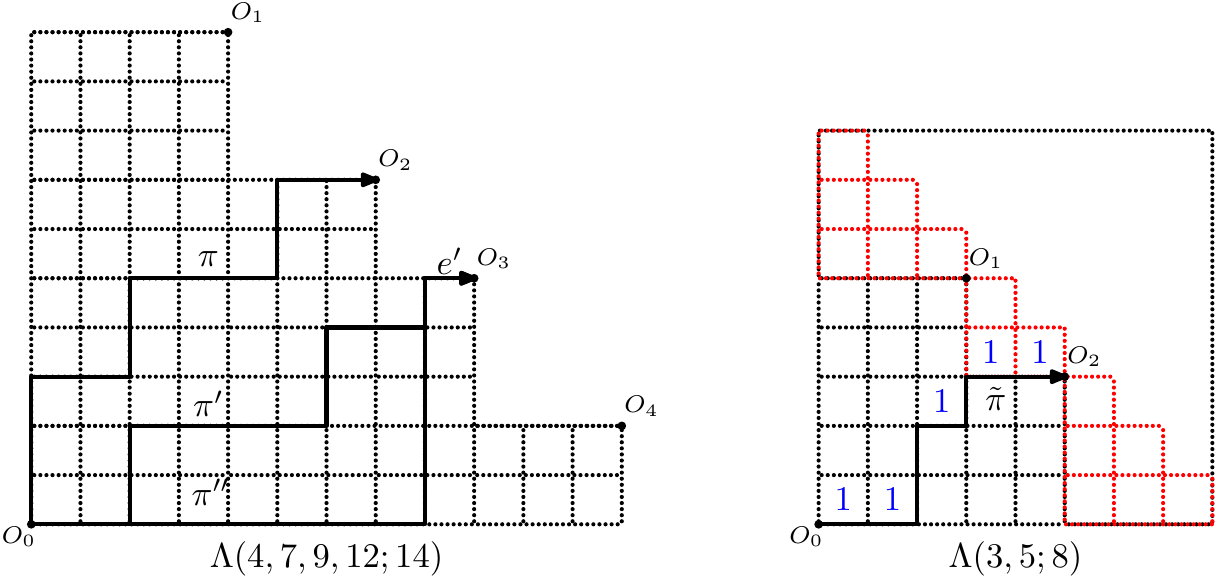}
  \end{figure}

\begin{defn}
   Given any pair $(\pi_I, \pi_J)$ (not necessarily incomparable), where   $I=[i_1, \cdots, i_\ell]$ and $J=[j_1, \cdots, j_m]$ with $\ell\geq m$,   the meet $\pi_{I\wedge J}$ and the join $\pi_{I\vee J}$ are defined respectively  by
  \begin{align*}
     I\wedge J&:=\{\min\{i_1,j_1\}, \cdots, \min\{i_m, j_m\}, i_{m+1}, \cdots, i_l\},\\
     I\vee J &:=\{\max\{i_1,j_1\}, \cdots, \max\{i_m, j_m\}\}.
  \end{align*}
\end{defn}
 Let   $X_0$ be the   subvariety of $\prod_{i=1}^k\mathbb{P}(\bigwedge^{n_i}\mathbb{C}^n)$
defined by the quadratic equations   $$p_Ip_J-p_{I\vee J}p_{I\wedge J}=0,$$
   one for each incomparable pair $(\pi_I, \pi_J)$; here $\{p_I:=p_{\pi_I}\}_I$ are naturally regarded as the multi-homogeneous coordinates of the product of projective spaces.
  Let $\lambda_{n+1}^{(n)}=0$ and set $b_j=\lambda^{(n)}_j-\lambda^{(n)}_{j+1}$ for each $1\leq j\leq n$. In particular, $b_j=0$ if $j\neq n_i$ for any $i$.
   We have
 \begin{prop}[Proposition 7 of \cite{KoMi}]\label{propEmbeddingsame}
    The  projective  embedding
    {\upshape $$X_0\hookrightarrow  \mathbb{P}\Big(\bigotimes_{i=1}^{k+1} \mbox{Sym}^{b_{n_i}}\big(\bigwedge^{n_i}\mathbb{C}^n\big)\Big)$$}
     is the projective embedding of the Gelfand-Cetlin  toric variety $X_{\Delta}$  associated to $\Delta$.
 \end{prop}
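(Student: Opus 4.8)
The plan is to identify $X_0$ as the projective toric variety attached to a finite distributive lattice (a Hibi variety) and then to match its polytope with $\Delta$. First I would observe that the set of positive paths on $\Lambda(n_1,\cdots,n_k;n)$, partially ordered by $\leq$ and equipped with the meet $\pi_{I\wedge J}$ and the join $\pi_{I\vee J}$, forms a finite distributive lattice $L$: every positive path has exactly $n$ steps, so a path toward $O_j$ is recorded by its set $I$ of $n_j$ horizontal steps, and under this bookkeeping the order $\pi_I\leq\pi_J$, the meet, and the join become the order, meet, and join of a distributive lattice of multi-subsets of $\{1,\dots,n\}$. The defining binomials $p_Ip_J-p_{I\vee J}p_{I\wedge J}$ are then precisely the Hibi relations of $L$, and distributivity is exactly the combinatorial input guaranteeing that the ideal they generate is prime; hence $X_0$ is reduced and irreducible and, by Hibi's theorem, a projectively normal toric variety. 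Moreover these binomials are multihomogeneous for the grading by $(\bigwedge^{n_1}\mathbb{C}^n,\dots,\bigwedge^{n_k}\mathbb{C}^n)$, since $\{\,|I\vee J|,\,|I\wedge J|\,\}=\{\,|I|,\,|J|\,\}$ as multisets, so that $X_0$ is a well-defined multiprojective subvariety and the embedding by $\bigotimes_i\mathrm{Sym}^{b_{n_i}}(\bigwedge^{n_i}\mathbb{C}^n)$ is the one cut out by restricting the line bundle $\mathcal{O}(b_{n_1},\dots,b_{n_{k+1}})$.

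Second I would compute the polytope of this embedding. The sections of $\mathcal{O}(b_{n_1},\dots,b_{n_{k+1}})$ on $X_0$ are spanned by the standard monomials of that multidegree, namely products along multichains in $L$; by Hibi's theorem their exponent vectors are the lattice points of the marked order polytope of the poset $\mathcal{P}$ of join-irreducibles of $L$, with marks prescribed by the $b_{n_i}$. Birkhoff duality indexes the join-irreducibles of $L$ by the boxes of $\Lambda$, so $|\mathcal{P}|$ equals the number of boxes, which is $\sum_i(n_i-n_{i-1})(n-n_i)=\dim\Delta$; and the covering relations of $\mathcal{P}$, together with the top and bottom marks, correspond to the effective edges of $\Lambda$, i.e. to the facets $F_e$ presented by the equations $a_e=b_e$. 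I would then exhibit the affine-linear change of coordinates sending the exponent of a standard monomial on the box indexed by $(i,j)$ to the entry $\lambda^{(i)}_j$ of a Gelfand-Cetlin pattern: under it the order-polytope inequalities become exactly $\lambda^{(i+1)}_j\geq\lambda^{(i)}_j\geq\lambda^{(i+1)}_{j+1}$, with the marks $b_{n_i}=\lambda^{(n)}_{n_i}-\lambda^{(n)}_{n_i+1}$ fixing the boundary entries $\lambda^{(n)}_{n_i}$. This realizes the polytope as $\Delta_\lambda$, and hence the embedding as that of $X_\Delta$.

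The hardest part is this last matching step. One must show not merely that the standard monomials of multidegree $(b_{n_1},\dots,b_{n_{k+1}})$ and the integer Gelfand-Cetlin patterns for $\lambda$ are equinumerous, but that the monomial semigroup of the embedding is saturated and that its convex hull is $\Delta$ with the correct integral structure inside the common coordinate space. Concretely this is the standard-monomial, or semistandard-tableau, bijection: every Gelfand-Cetlin pattern must decompose uniquely into a multichain of paths $\pi_I$, and the vanishing of the corresponding $p_I$ along a facet must match the single equation $a_e=b_e$ of the associated effective edge $e$. Once the normality supplied by Hibi's theorem and this lattice-point bijection are both in hand, the convex hull of the exponent vectors equals $\Delta$ together with its lattice, so the two projective embeddings coincide.
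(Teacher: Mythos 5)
Your proof is correct, but it follows a genuinely different route from the paper's. The paper does not rebuild the toric geometry of $X_0$ from scratch: it quotes from \cite{KoMi} (with the case $b_{n_i}=1$ from \cite{BCFKS}) that $H^0\big(X_0,\mathcal{O}_{X_0}(b_{n_1},\dots,b_{n_{k+1}})\big)$ is a multiplicity-one sum of weight spaces whose weights are the sums $\sum_I\beta_I$ of path vectors, and then reduces Proposition \ref{propEmbeddingsame} to the single combinatorial assertion that the column-summation map $\phi$ carries the weight set $\Upsilon_\lambda$ bijectively onto the lattice points $\Pi_\lambda$ of $\Delta$, an assertion whose proof is in turn deferred to the arguments of \cite{KoMi}. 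You instead derive all the algebro-geometric input from Hibi's theory: distributivity of the lattice of positive paths makes the ideal of relations $p_Ip_J-p_{I\vee J}p_{I\wedge J}$ prime (hence $X_0$ is a reduced, irreducible, projectively normal toric variety), sections of $\mathcal{O}(b_{n_1},\dots,b_{n_{k+1}})$ are spanned by standard monomials indexed by multichains, and Birkhoff duality identifies the join-irreducibles of the lattice with the boxes of $\Lambda$, so that the weight polytope is the marked order polytope of the box poset, which is $\Delta_\lambda$ after the affine change of coordinates. The two arguments converge on the same combinatorial core: your multichain-to-Gelfand-Cetlin-pattern bijection is exactly the paper's statement $\phi(\Upsilon_\lambda)=\Pi_\lambda$ (straightening turns any admissible collection of paths into a multichain of the same weight), and your affine change of coordinates is literally the paper's map $\phi$. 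Your route buys self-containedness and conceptual packaging --- irreducibility, normality, and saturation of the section semigroup come for free from Hibi's theorem, much in the spirit of \cite{GoLa} --- while the paper's route buys brevity and, importantly, sets up the explicit apparatus $\phi$, $\psi$, $\beta_I$ that is reused verbatim in the proofs of Propositions \ref{proptoricdivisor} and \ref{propDefEqnTorforSchuGrass}. One small imprecision you should fix: the exponent vectors of your standard monomials live in $\mathbb{Z}^{|L|}$, one coordinate per path; what lies in the marked order polytope is their image under the weight (character) map, with one coordinate per box --- this does not affect the argument, but the two spaces should not be conflated.
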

\noindent We remark that the above proposition was stated for   $(n_1, \cdots, n_k; n)=(1, 2, \cdots, n-1; n)$ in \cite{KoMi}, while it holds in general.
The coincidence  was obtained in \cite{KoMi} by making a bijection between the set $\Pi_\lambda$  of   lattice points of $\Delta$ and the set  of  global sections of  $\mathcal{O}_{X_0}(b_{n_1},\cdots, b_{n_{k+1}})$ (by which we mean the result of tensoring together with the pullbacks of the bundles $\mathcal{O}_{\mathbb{P}(\bigwedge^{n_i}\mathbb{C}^n )}(b_{n_i})$ to $X_0$).
For the later   study of toric subvarieties, we reinterpret the  identification in the current framework here.

Let us define an endomorphism of $\mathbb{Z}^{n(n+1)\over 2}$ by  $$\phi: \mathbb{Z}^{n(n+1)\over 2}\to \mathbb{Z}^{n(n+1)\over 2};  \{b_{ij}\}_{1\leq j\leq i\leq n}\mapsto \{\lambda^{(i)}_j:=b_{i,j}+b_{i-1, j}+\cdots+b_{j, j}\}.$$
In other words, if we  label every $\lambda^{(i)}_j$-box as a $b_{ij}$-box,    then the value $\lambda^{(i)}_j$ equals   the sum of values on the $b_{ij}$-box and all boxes below $b_{ij}$-box in column $j$.
Clearly, $\phi$ is a bijection with its inverse map   $\psi$     defined by   $b_{ij}=\lambda^{(i)}_j-\lambda^{(i-1)}_j$ (where $\lambda^{(i-1)}_i:=0$ for convention).
For each positive path $\pi_I$, we define a vector  $\beta_I=(\beta_{ij})\in \mathbb{Z}^{n(n+1)\over 2}$ by    setting $\beta_{ij}=1$ if the   $b_{ij}$-box is right above the path $\pi_I$, or $\beta_{ij}=0$ otherwise (see Figure \ref{PPforFl} for an illustration of the positive path $\tilde \pi=\tilde \pi_{1,2,5,7,8}$ for $F\ell_{3, 5; 8}$, where all the boxes assigned with $1$ are shown).\footnote{An exponent vector $\alpha_I\in \mathbb{Z}^{n^2}$ is equipped with $p_I$   in \cite{KoMi}.} Here for convention, we have   treated the path $\pi_{12\cdots n}$  (which starts at $O_0$, and moves along the bottom boundary of the square $Q$, towards the lower right corner of $Q$) as a positive path.
As in \cite{KoMi}\footnote{The case    $b_{n_1}= b_{n_2}=\cdots=b_{n_{k+1}}=1$  was proved in   \cite{BCFKS}.},
the vector space of global sections $\mathcal{O}_{X_0}(b_{n_1},\cdots, b_{n_{k+1}})$  decomposes into a multiplicity-one direct sum of weight spaces. Denote by ${[n]\choose j}$ the set of subsequences of $[1,2,\cdots, n]$ of cardinality $j$. The set of weights occurring in this decomposition is given by
 $$\Upsilon_\lambda=\big\{\sum_{I}\beta_{I}~|~ \mbox{there are exactly } b_j \mbox{ sequences } I\in {[n] \choose j} \mbox{ for } 1\leq j\leq n \big\}.$$
\begin{prop}
    The   bijection  $\phi$ sends $\Upsilon_\lambda$ onto  $\Pi_\lambda$. 
\end{prop}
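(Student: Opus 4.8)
The plan is to prove the two inclusions $\phi(\Upsilon_\lambda)\subseteq\Pi_\lambda$ and $\Pi_\lambda\subseteq\phi(\Upsilon_\lambda)$. Since $\phi$ is a linear bijection of $\mathbb{Z}^{n(n+1)/2}$ with explicit inverse $\psi$, showing that its image on $\Upsilon_\lambda$ is exactly $\Pi_\lambda$ immediately yields that $\phi$ restricts to a bijection of $\Upsilon_\lambda$ onto $\Pi_\lambda$. The entire argument reduces to understanding one path at a time.

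First I would establish the following one-path statement: for a positive path $\pi_I$ with $|I|=j$, the image $\phi(\beta_I)$ is a $\{0,1\}$-valued Gelfand--Cetlin pattern with top row $(\underbrace{1,\dots,1}_{j},0,\dots,0)$, and $I\mapsto\phi(\beta_I)$ is a bijection from the positive paths with $|I|=j$ onto all such $0$--$1$ patterns. To see this, note that $\beta_I$ has exactly one nonzero entry in each of the columns $1,\dots,j$, namely the box immediately above the $r$-th horizontal step of $\pi_I$ lies in column $r$, and no other nonzero entry. Applying $\phi$, which forms the partial column sums from the bottom upward, therefore produces entries in $\{0,1\}$ and top row $(1^{j},0^{\,n-j})$. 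Finally, the interlacing inequalities $\lambda^{(i+1)}_m\ge\lambda^{(i)}_m\ge\lambda^{(i+1)}_{m+1}$ for a $0$--$1$ pattern translate exactly into the upward/rightward monotonicity defining a positive path, which yields both the pattern property and the bijection (matching the count $\binom{n}{j}=\dim\bigwedge^{j}\mathbb{C}^n$).

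The forward inclusion is then formal. By linearity, $\phi(\sum_I\beta_I)=\sum_I\phi(\beta_I)$ is a sum of $0$--$1$ Gelfand--Cetlin patterns; since the interlacing relations are linear inequalities of the form $x\ge y$, they are preserved under addition, so the sum is again a Gelfand--Cetlin pattern. For a collection consisting of exactly $b_m$ paths of cardinality $m$ for each $m$, the top rows add up to $\sum_m b_m(1^m,0^{\,n-m})$, whose $k$-th coordinate is $\sum_{m\ge k}b_m=\lambda^{(n)}_k$ by the telescoping $b_m=\lambda^{(n)}_m-\lambda^{(n)}_{m+1}$ and the convention $\lambda^{(n)}_{n+1}=0$. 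Hence $\phi(\sum_I\beta_I)\in\Pi_\lambda$.

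The reverse inclusion is where the real work lies. Given $P\in\Pi_\lambda$, I must write $P=\sum_t\varepsilon_t$ as a sum of $0$--$1$ Gelfand--Cetlin patterns; the multiplicities then take care of themselves, for every $0$--$1$ pattern has top row $(1^m,0^{\,n-m})$ and these must sum to the fixed $\lambda^{(n)}$, forcing exactly $b_m$ summands of height $m$. Thus only the \emph{existence} of a $0$--$1$ decomposition must be proved. I would obtain it by a greedy peeling: subtract from $P$ the indicator of its topmost remaining layer (equivalently, peel off the columns of the semistandard tableau associated with $P$), verify through the interlacing inequalities that this layer is a genuine $0$--$1$ Gelfand--Cetlin pattern and that the remainder is still a Gelfand--Cetlin pattern, and induct on $\sum_m\lambda^{(n)}_m$. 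Applying the one-path statement to each $\varepsilon_t=\phi(\beta_{I_t})$ and using $P=\phi(\psi(P))=\phi(\sum_t\beta_{I_t})$ with the correct cardinality distribution places $P$ in $\phi(\Upsilon_\lambda)$. I expect the main obstacle to be exactly this verification that the greedy layer is a valid $0$--$1$ pattern for an arbitrary integer pattern $P$; should it prove cumbersome, the existence of the decomposition is precisely the surjectivity already contained in the bijection between $\Pi_\lambda$ and the global sections of $\mathcal{O}_{X_0}(b_{n_1},\dots,b_{n_{k+1}})$ established in \cite{KoMi}, which may be invoked to bypass it.
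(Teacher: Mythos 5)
Your proposal is correct and follows essentially the same route as the argument the paper relies on: the paper's ``proof'' is just a citation to page 10 of Kogan--Miller, and that argument is precisely yours --- the forward inclusion by additivity of the interlacing inequalities, and the reverse inclusion by peeling off one $\beta_I$ at a time and inducting on the sum of the entries (Kogan--Miller peel the topmost nonzero box of each column in the $b$-coordinates, you peel the support layer; the two are mirror images). The one step you worry might be cumbersome is in fact immediate: your peel is $\min(P,1)$ and the remainder is $\max(P-1,0)$ taken entrywise, both functions are monotone and every defining condition of a Gelfand--Cetlin pattern is an inequality $x\ge y$ between entries, so both are preserved (indeed the full layer decomposition $P=\sum_{t\ge 1}[P\ge t]$ finishes the reverse inclusion in one stroke), and the fallback citation is unnecessary.
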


 \noindent The statement follows from  exactly  the same arguments as on page 10 of \cite{KoMi} with the replacement of notation by $\lambda^{(i)}_j=\lambda_{n+1-i,j}$ and $b_{ij}=a_{n+1-i,j}$,   and hence yields Proposition \ref{propEmbeddingsame}.

\section{Toric degenerations   of $W$-translated Schubert varieties}
In this section, we     introduce the notion of $W$-translated Schubert varieties, and  study their toric degenerations.
\subsection{$W$-translated Schubert varieties
}
Let $G=SL(n, \mathbb{C})$ and  $B\subset G$ be the Borel subgroup of upper triangular matrices. Let $P\supset B$ be the parabolic subgroup of $G$ consisting of block-upper triangular  matrices with  diagonal blocks of the form   $\mbox{diag}(M_1,\cdots, M_{k+1})$  
         where    $M_i$ is an  $(n_i-n_{i-1})\times (n_i-n_{i-1})$ matrix.
   The (partial) flag variety   $X=G/P$  is smooth projective, parameterizing partial flags in $\mathbb{C}^n$:
    $$X=\{V_{n_1}\subseteq \cdots \subseteq V_{n_k}\subseteq \mathbb{C}^{n}~|~ \dim V_{n_i}=n_i, i=1, 2, \cdots, k\}=:F\ell_{n_1, \cdots, n_k; n}.$$

The Weyl group $W=S_n$ of $G$ is generated by the transpositions   $s_i=(i, i+1)$, $i=1,\cdots, n-1$.
 Let $T\subset G$   consist  of diagonal matrices,  and    $N(T)$ denote the normalizer of $T$ in $G$. There is a standard isomorphism of groups
   $$W\overset{\cong}{\longrightarrow} N(T)/T;\quad w=s_{i_1}\cdots s_{i_m}\mapsto  \dot s_{i_1}\cdots \dot s_{i_m}T.$$
   Here  $\dot s_i=\big(\sum_{j=1}^{i-1} E_{jj}\big)+E_{i, i+1}-E_{i+1,i}+\sum_{j=i+2}^n E_{jj}\in G$ where
   $E_{ij}$ denotes  the $n\times n$ matrix with the $(i, j)$-entry being 1, and 0 otherwise.
 {Although   $\dot w:=\dot s_{i_1}\cdots \dot s_{i_m}$ depends on the expression of $w=s_{i_1}\cdots s_{i_m}$,  the coset $\dot w T$ does not.}

 Let $\ell: W\to \mathbb{Z}_{\geq 0}$ be   the standard length function.  The flag variety   $X=G/P$ admits  a well-known Bruhat decomposition
      $$X=\bigsqcup_{w\in W^P} B\dot w P/P$$
where  $B\dot w P/P$ is isomorphic to the affine space $\mathbb{C}^{\ell(w)}$, and  $W^P$ is the subset    of $S_n$ given by
  $W^P=\{w\in S_{n}~|~ w(n_{i-1}+1)<w(n_{i-1}+2)<\cdots <w(n_i), i=1, \cdots, k+1\}.$
 The \textit{(opposite) Schubert varieties} $X_w$ (resp. $X^w$) of complex (co)dimension $\ell(w)$  are defined by
               $$X_w:=\overline{B\dot w P/P},\quad X^w:=\overline{\dot w_0 B\dot w_0\dot w P/P}  $$
where  
    $w_0$ is  the longest element in $W$.
 The  Schubert cohomology classes $\sigma^w:=P.D.[X^w]\in H^{2\ell(w)}(X, \mathbb{Z})$
form an  additive $\mathbb{Z}$-basis of $H^*(X, \mathbb{Z})$.



\begin{defn}
 A subvariety $Y\subset SL(n, \mathbb{C})/P$ is called a \textbf{$W$-translated Schubert variety}  if $Y=\dot uX_v$ for some $u\in W$ and $v\in W^P$.
\end{defn}

\noindent Clearly, the (opposite) Schubert varieties are special cases of the $W$-translated Schubert varieties. Moreover,
    $W$-translated Schubert varieties $\dot uX_v$ represent the same cohomology class as $X_v$, and are all $T$-invariant. Although the Weyl group  $W$ does not act on   $SL(n, \mathbb{C})/P$, it does act transitively on the set of $W$-translated Schubert varieties $\{\dot uX_v~|~ u\in W, v\in W^P\}$
 in the obvious way. We will simply denote $\dot uX_v$ as $uX_v$ by abuse of notation.

 The $W$-translated Schubert divisors are bijective to the Pl\"ucker coordinates, or equivalently, to the positive paths of the ladder diagram  $\Lambda(n_1, \cdots, n_k; n)$. More precisely,   
  there is a  well-known Pl\"ucker embedding 
  $$Pl: F\ell_{n_1, n_2, \cdots, n_k; n}\hookrightarrow   \mathbb{P}(\bigwedge^{n_1}\mathbb{C}^n)\times \cdots\times  \mathbb{P}(\bigwedge^{n_k}\mathbb{C}^n).$$
It can be  defined by the  \textit{straightening relations} (see \cite[section 9]{GoLa})
  $$p_Ip_J=p_{I\vee J}p_{I\wedge J}\pm\sum_{(I', J')}p_{I'}p_{J'},$$
  one for each incomparable pair $(\pi_I, \pi_J)$ (where each  $p_{I'}p_{J'}$ occurring in the sum satisfies
                 $ I'<I\wedge J< I\vee J<J'$).
  Given a positive path $\pi_I$, we define 
   $$D_{p_I}:=Pl^{-1}\big( \{p_I=0\}\cap Pl(X)\big),\quad\mbox{ or simply } D_{p_I}:=\{p_I=0\}\cap X$$
whenever there is no confusion.
As  a standard fact,  the   opposite  Schubert divisor $X^{s_{n_i}}$ is  defined by a single coordinate hyperplane   for each $i$, precisely given by
             $$X^{s_{n_i}}= D_{p_I} \mbox{ for } I=[1,2,3,\cdots, n_i].$$
We observe that    $W=N(T)/T$ acts transitively
  on  the set   $\{D_{p_J}\}_J$ via
          \begin{equation}
             w\cdot D_{p_J}= D_{p_{w(J)}},
          \end{equation}
 where the usual notation convention $p_{j_2j_1\cdots j_{n_\ell}}=-p_{j_1j_2\cdots j_{n_\ell}}$ for Pl\"ucker coordinates is adopted. 
 This yields the following.
\begin{prop}\label{propWSchubertdivisor} The set $\{uX^{s_{n_i}}~|~ u\in W, 1\leq i\leq k\}$ of    $W$-translated Schubert divisors coincides with the set $\{D_{p_I}\}_I$, precisely   given by
   $$uX^{s_{n_i}}=D_{p_{u([1,2,\cdots, n_i])}}.$$
  \end{prop}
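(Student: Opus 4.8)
The plan is to obtain the proposition as a direct consequence of two facts already recorded above: the identification of the opposite Schubert divisor $X^{s_{n_i}}$ with a single coordinate divisor, and the Weyl-group equivariance of the family $\{D_{p_J}\}_J$. First I would recall the standard fact that $X^{s_{n_i}} = D_{p_I}$ for $I = [1,2,\cdots,n_i]$. Applying the transitive action of $W = N(T)/T$ given by $w \cdot D_{p_J} = D_{p_{w(J)}}$ with $w = u$ then yields immediately
$$uX^{s_{n_i}} = u \cdot D_{p_{[1,\cdots,n_i]}} = D_{p_{u([1,\cdots,n_i])}},$$
which is precisely the asserted formula. Here I would emphasize that the divisor $D_{p_J}$ depends only on the underlying set $\{j_1,\cdots,j_{n_i}\}$ and not on the ordering: under the sign convention $p_{\cdots j_a j_b \cdots} = -p_{\cdots j_b j_a \cdots}$ the Pl\"ucker coordinate changes only by a sign, so its zero locus is unchanged, and reordering $u([1,\cdots,n_i])$ increasingly recovers a bona fide positive-path index.

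For the set-theoretic equality I would argue both inclusions. The displayed formula shows that every $uX^{s_{n_i}}$ equals some $D_{p_I}$, giving one inclusion. For the reverse, I would use that the positive paths on $\Lambda(n_1,\cdots,n_k;n)$ terminating at $O_i$ are in bijection with the index sets of the Pl\"ucker coordinates on $\mathbb{P}(\bigwedge^{n_i}\mathbb{C}^n)$, namely the $n_i$-element subsets of $[1,\cdots,n]$ (as noted above, the $W$-translated Schubert divisors are in bijection with the Pl\"ucker coordinates, equivalently with the positive paths). Since the assignment $u \mapsto u(\{1,\cdots,n_i\})$ preserves cardinality and $S_n$ acts transitively on $n_i$-element subsets, for any such $I$ there is $u \in S_n$ with $u(\{1,\cdots,n_i\}) = I$, whence $D_{p_I} = uX^{s_{n_i}}$. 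Letting $i$ range over $1 \le i \le k$ covers all positive paths, and hence all of $\{D_{p_I}\}_I$.

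Since the two facts I rely on are already in place, the proposition is essentially a bookkeeping corollary. The only points requiring care are (i) that the Weyl-group action is organized level by level --- it permutes the $n_i$-subsets among themselves but never changes the cardinality, so the decomposition of $\{D_{p_I}\}_I$ according to the target vertex $O_i$ is respected --- and (ii) the set-versus-sequence normalization discussed above. If instead one wished to establish the statement entirely from scratch, the genuine work would lie in proving the equivariance $w \cdot D_{p_J} = D_{p_{w(J)}}$: this amounts to checking that the representative $\dot w$ acts on $\bigwedge^{n_i}\mathbb{C}^n$ by sending $e_{j_1}\wedge\cdots\wedge e_{j_{n_i}}$ to $\pm\, e_{w(j_1)}\wedge\cdots\wedge e_{w(j_{n_i})}$, together with the identification of $X^{s_{n_i}}$ with the vanishing of $p_{[1,\cdots,n_i]}$; both are subsumed in the preceding discussion and may be assumed here.
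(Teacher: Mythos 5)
Your proposal is correct and follows exactly the paper's route: the paper derives the proposition ("This yields the following") directly from the two facts you invoke, namely the standard identification $X^{s_{n_i}}=D_{p_I}$ for $I=[1,2,\cdots,n_i]$ and the $W$-equivariance $w\cdot D_{p_J}=D_{p_{w(J)}}$ under the stated sign convention. Your additional bookkeeping (the sign/ordering normalization and the transitivity of $S_n$ on $n_i$-element subsets for the reverse inclusion) just makes explicit what the paper leaves implicit.
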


 Every Schubert variety $X_w$ is scheme theoretically 
the intersection of $X$ with all the Pl\"ucker coordinate hyperplanes containing $X_w$ (see e.g.   \cite[section 2.10]{BiLa}). Consequently, every $W$-translated Schubert variety is scheme theoretically 
the intersection of  $X$ with all the $W$-translated Schubert divisors containing it.
Moreover, the $W$-translated Schubert divisors behave very well with respect to the toric degeneration of $X$, as we will see  in the next subsection.

\subsection{Toric degenerations  of $W$-translated Schubert divisors for $F\ell_{n_1,\cdots, n_k;n}$}

There have been lots of studies of the toric degenerations of the partial flag variety $X$ to the Gelfand-Cetlin toric variety $X_{\Delta}$  \cite{GoLa,BCFKS,KoMi,KnMi,NNU}.
Although the degenerations are realized in different ways, they satisfy the following common   property: the degeneration is  a flat family
$$  \xymatrix{
  \mathcal{X} \ar[d]_{\varrho}  \ar[r]^{\subset\qquad\qquad\qquad\quad{}}  & \ar[dl]_{}       \mathbb{P}(\bigwedge^{n_1}\mathbb{C}^n)\times \cdots\times  \mathbb{P}(\bigwedge^{n_k}\mathbb{C}^n)\times \mathbb{C}     \\
  \mathbb{C}                 }
$$
such that   $\mathcal{X}|_{\mathbb{C}^*}\cong X\times \mathbb{C}^*$, and  $\mathcal{X}|_{t=0}= {X_{\Delta}}\times\{0\}$.
It is then natural to ask whether a flat subfamily exists and  how it looks like for a   reasonably nice subvariety of $X$.
In the case of complete flag variety $F\ell_{1,2, \cdots,n-1; n}$, this has been well studied by Kogan and Miller \cite{KoMi} for (opposite) Schubert varieties, which are special $W$-translated Schubert varieties. They  degenerate to reduced union of  the toric subvarieties of so-called (dual) Kogan faces of the Gelfand-Celtin polytope $\Delta$.
Here we will restrict us to divisors, but will study the general $W$-Schubert divisors of partial flag varieties $F\ell_{n_1,\cdots, n_k; n}$.

Let us use the degeneration given by  Gonciulea and  Lakshmibai \cite{GoLa}, which is defined by equations of the form
 $$p_Ip_J=p_{I\vee J}p_{I\wedge J}\pm\sum_{(I', J')}t^{N_{I'}+N_{J'}-N_{I}-N_{J}}p_{I'}p_{J'},$$
   one for each incomparable pair $(\pi_I, \pi_J)$.
The isomorphism $\mathcal{X}|_{t=1} \overset{\cong}{\longrightarrow}\mathcal{X}|_{t}$ (where $t\neq 0$) is simply given by  $p_I\mapsto t^{N_I}p_I$, where the exponents $N_I=N_{[i_1, \cdots, i_\ell]}$ can be defined for instance  by the number
  $\sum_{r=1}^\ell (2n)^{n-r}i_r.$
It follows immediately that any closed subvariety $\mathcal{Z}\subset \mathcal{X}$ given by the intersection of Pl\"ucker coordinate hyperplanes is flat over $\mathbb{C}^*$. Such $\mathcal{Z}|_{\mathbb{C}^*}$ admits a flat extension to the central fiber due to the next proposition.

\begin{prop}[Proposition III. 9.8 of \cite{Hart}]\label{propHart}
  Let $Y$ be a regular, integral scheme of dimension 1, let $p\in Y$ be a closed point, and let $\mathcal{Z}^*\subseteq \mathbb{P}^N_{Y-p}$ be a closed subscheme which is flat over $Y-p$. Then there exists a unique closed subscheme $\overline{\mathcal{Z}^*}\subset \mathbb{P}^N_Y$, flat over $Y$, whose restriction to $\mathbb{P}^N_{Y-p}$ is $\mathcal{Z}^*$.
\end{prop}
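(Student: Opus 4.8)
The plan is to produce $\overline{\mathcal{Z}^*}$ as the \emph{scheme-theoretic closure} of $\mathcal{Z}^*$ in $\mathbb{P}^N_Y$ and then to reduce both the flatness and the uniqueness to the elementary fact that over a one-dimensional regular local ring flatness is equivalent to torsion-freeness. Let $j \colon \mathbb{P}^N_{Y-p} \hookrightarrow \mathbb{P}^N_Y$ be the open immersion. First I would define $\overline{\mathcal{Z}^*}$ by the ideal sheaf $\mathcal{I} := \ker\bigl(\mathcal{O}_{\mathbb{P}^N_Y} \to j_* \mathcal{O}_{\mathcal{Z}^*}\bigr)$; since $\mathcal{Z}^*$ is already closed in $\mathbb{P}^N_{Y-p}$, restricting $\overline{\mathcal{Z}^*}$ back to $\mathbb{P}^N_{Y-p}$ recovers $\mathcal{Z}^*$, which settles the compatibility condition. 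Flatness is a local question, and away from $p$ the scheme $\overline{\mathcal{Z}^*}$ agrees with $\mathcal{Z}^*$ and is flat by hypothesis, so the only point to verify is flatness over the local ring $A := \mathcal{O}_{Y,p}$, which is a discrete valuation ring because $Y$ is regular, integral and one-dimensional.

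Working over $A$, with uniformizer $t$ and fraction field $K$, the punctured base $Y - p$ becomes $\operatorname{Spec} K$, and if $\mathfrak{a}' \subseteq K[x_0, \dots, x_N]$ is the homogeneous ideal of $\mathcal{Z}^*$ then the closure is cut out by $\mathfrak{a} := \mathfrak{a}' \cap A[x_0, \dots, x_N]$. I would then check that $A[x_0, \dots, x_N]/\mathfrak{a}$ is torsion-free: if $t f \in \mathfrak{a}$ with $f \in A[x_0, \dots, x_N]$, then $t f \in \mathfrak{a}'$, hence $f \in \mathfrak{a}'$ since $t$ is a unit in $K$, so $f \in \mathfrak{a}' \cap A[x_0, \dots, x_N] = \mathfrak{a}$. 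Each graded piece is therefore a finitely generated torsion-free, hence free, $A$-module, so $\overline{\mathcal{Z}^*}$ is flat over $A$, completing existence.

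For uniqueness, any competitor $\mathcal{W}$ flat over $Y$ with $\mathcal{W}|_{Y-p} = \mathcal{Z}^*$ has, locally at $p$, a defining ideal $\mathfrak{b} \subseteq A[x_0, \dots, x_N]$ whose quotient is torsion-free (flatness over the valuation ring) and which satisfies $\mathfrak{b}\cdot K[x_0, \dots, x_N] = \mathfrak{a}'$ (agreement on the generic fiber). Torsion-freeness makes $\mathfrak{b}$ saturated with respect to $t$, forcing $\mathfrak{b} = \bigl(\mathfrak{b}\cdot K[x_0, \dots, x_N]\bigr) \cap A[x_0, \dots, x_N] = \mathfrak{a}' \cap A[x_0, \dots, x_N] = \mathfrak{a}$, so $\mathcal{W} = \overline{\mathcal{Z}^*}$ near $p$ and hence everywhere. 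The only genuinely nonformal ingredient — and the step I would set up most carefully — is the equivalence of flatness over the regular local ring $A$ with torsion-freeness of the homogeneous coordinate ring, refined by the observation that a finitely generated torsion-free module over a discrete valuation ring is free; the remainder is bookkeeping with the intersection ideal $\mathfrak{a}' \cap A[x_0, \dots, x_N]$.
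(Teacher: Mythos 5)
The paper does not actually prove this proposition: it is quoted verbatim from Hartshorne (Proposition III.9.8) and used as a black box, so the only meaningful comparison is with Hartshorne's own argument. Your proof is essentially that argument made explicit over the local ring: Hartshorne likewise takes the scheme-theoretic closure and deduces flatness from his Proposition III.9.7 (a scheme over a regular integral one-dimensional base is flat iff every associated point maps to the generic point), which is exactly the ``flat over a DVR $\Leftrightarrow$ torsion-free'' fact you isolate; he argues sheaf-theoretically with associated points, while you work with homogeneous ideals in $A[x_0,\dots,x_N]$. Your version is correct, but two steps deserve tightening. First, in the uniqueness argument, agreement of $\mathcal{W}$ and $\mathcal{Z}^*$ on the generic fiber only says that $\mathfrak{b}\cdot K[x_0,\dots,x_N]$ and $\mathfrak{a}'$ cut out the same closed subscheme of $\mathbb{P}^N_K$, hence have the same saturation, not that they are equal as ideals; to repair this, take $\mathfrak{b}$ to be the saturated ideal $\Gamma_*\bigl(\mathcal{I}_{\mathcal{W}}\bigr)$, note its quotient is torsion-free because it embeds in $\Gamma_*\bigl(\mathcal{O}_{\mathcal{W}}\bigr)$ and $\mathcal{W}$ is flat, check that $t$-saturation together with saturation of $\mathfrak{b}$ forces $\mathfrak{b}\cdot K[x_0,\dots,x_N]$ to be saturated, and then use the bijection between saturated homogeneous ideals and closed subschemes to identify it with $\mathfrak{a}'$. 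Second, flatness of the graded ring $A[x_0,\dots,x_N]/\mathfrak{a}$ over $A$ yields flatness of the subscheme $\overline{\mathcal{Z}^*}=\mathrm{Proj}$ of that ring because each chart ring $\bigl((A[x_0,\dots,x_N]/\mathfrak{a})_{x_i}\bigr)_0$ is a graded direct summand of the flat module $(A[x_0,\dots,x_N]/\mathfrak{a})_{x_i}$ (or, more simply, is torsion-free, and torsion-free modules over a DVR are flat even without finite generation). Both points are routine bookkeeping, so your proof stands; what it buys over the paper's citation is a self-contained argument, and what Hartshorne's associated-points formulation buys is that it avoids homogeneous coordinate rings and the saturation subtleties entirely.
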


 Now we consider the case when the subvariety of $X$ is   a $W$-translated Schubert divisor. Recall that every effective edge $e$ in the ladder diagram
  $\Lambda=\Lambda(n_1, \cdots, n_k; n)$ defines a facet $F_e$ of the Gelfand-Cetlin polytope $\Delta$. We have denoted by   $X_{F_e}$
   the toric divisor of the Gelfand-Celtin toric variety $X_\Delta$ associated to the facet $F_e$.

 \begin{prop}\label{proptoricdivisor} For any  effective edge $e$ of  $\Lambda$, the corresponding toric divisor $X_{F_e}$, as a subvariety of $X_0$ in  $\prod_{i=1}^k\mathbb{P}(\bigwedge^{n_i}\mathbb{C}^n)$,  is given by
  $$X_{F_e}=X_0\bigcap\{ p_I=0~|~  \pi_I \mbox{ is a positive path containing } e\}.$$
 \end{prop}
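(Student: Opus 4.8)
The plan is to identify both sides with explicit torus-invariant loci and match them orbit by orbit, using the weight-space dictionary behind Proposition~\ref{propEmbeddingsame}. Since $X_0\cong X_\Delta$ as embedded toric varieties, the standard orbit--face correspondence for projective toric varieties \cite{CLS} describes $X_{F_e}$, inside the big projective space $\mathbb{P}\big(\bigotimes_i \mathrm{Sym}^{b_{n_i}}(\bigwedge^{n_i}\mathbb{C}^n)\big)$, as the common zero locus of the lattice-point coordinate sections $x_\chi$ with $\chi\in\Pi_\lambda\setminus F_e$. By the multiplicity-one weight decomposition, each such $x_\chi$ is, up to a scalar and the relations of $X_0$, the monomial $\prod_{r=1}^{s} p_{I_r}$ in the path-coordinates, where the positive paths $\pi_{I_1},\dots,\pi_{I_s}$ satisfy $\sum_r \beta_{I_r}=\psi(\chi)$. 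Writing $L_e$ for the linear functional $a_e-b_e$ (a difference of two Gelfand--Cetlin coordinates, so $L_e\ge 0$ on $\Delta$ with $F_e=\{L_e=0\}$), and abbreviating the right-hand locus as $Z:=X_0\cap\{p_I=0 \mid \pi_I\ni e\}$, the whole argument rests on the combinatorial lemma: for every positive path $\pi_I$, the value $L_e(\phi(\beta_I))$ equals $0$ if $e\notin\pi_I$ and is strictly positive if $e\in\pi_I$.

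Granting the lemma, the inclusion $Z\subseteq X_{F_e}$ is immediate. Indeed, for $\chi\notin F_e$ one has $0<L_e(\chi)=\sum_r L_e(\phi(\beta_{I_r}))$, so at least one factor $\pi_{I_r}$ of the monomial $x_\chi$ runs through $e$; hence $x_\chi$ vanishes on $Z$, and therefore so does every off-$F_e$ section, placing $Z$ inside $X_{F_e}$. The horizontal-edge case of the lemma is transparent: there $L_e$ is the single coordinate $b_{ij}$, and $L_e(\phi(\beta_I))=(\beta_I)_{ij}\in\{0,1\}$ equals $1$ exactly when the box $b_{ij}$ sits immediately above $\pi_I$, i.e.\ exactly when $\pi_I$ traverses $e$.

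For the reverse inclusion I will show $X_{F_e}\subseteq Z$ by a one-parameter degeneration. Let $\nu_e$ be the primitive inner normal of $F_e$, viewed as a one-parameter subgroup, so that $p_I\big(\nu_e(t)\cdot x\big)=t^{L_e(\phi(\beta_I))}p_I(x)$. Starting from a point $x_*$ in the dense torus orbit of $X_0$ (all $p_I(x_*)\ne 0$) and letting $t\to 0$, within each factor $\mathbb{P}(\bigwedge^{n_i}\mathbb{C}^n)$ the surviving coordinates are precisely the paths of length $n_i$ attaining the minimal weight $\min_{|I|=n_i}L_e(\phi(\beta_I))$. By the lemma this minimum is $0$ and is attained exactly by the paths avoiding $e$; thus the limit point $\gamma$ lies in $Z$, while it lies in the orbit $O(F_e)$ because $\nu_e$ singles out the facet $F_e$. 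Since $Z$ is closed and $T$-invariant, it contains $O(F_e)$ and hence its closure $X_{F_e}$. Combined with the first inclusion this yields equality; alternatively one concludes from $O(F_e)\subseteq Z$ that $\dim Z\ge \dim X_0-1$, which forces $Z=X_{F_e}$ as $Z\subseteq X_{F_e}$ and $X_{F_e}$ is irreducible of codimension one.

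The hard part will be the combinatorial lemma for the ``diagonal'' family of effective edges, where $L_e=\lambda^{(i)}_j-\lambda^{(i+1)}_{j+1}$ is not a single $b$-coordinate. There $L_e(\phi(\beta_I))$ records the difference between the numbers of above-path boxes cut out by $\pi_I$ in the two columns adjacent to $e$, and the task is to prove this difference is nonnegative and vanishes exactly when $\pi_I$ does not use $e$. I expect this to follow from a short case analysis of how a monotone path meets the two boxes sharing $e$ (the path passes below, along, or above $e$), together with the contiguity of above-path boxes within each column; verifying it uniformly for both edge orientations, and confirming that single-path weights are genuinely realized by the claimed standard monomials under the multiplicity-one decomposition, is where the real work lies.
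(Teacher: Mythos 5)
Your strategy is sound, and it diverges from the paper's proof at the variety level while sharing the same combinatorial core. Your key lemma --- $L_e(\phi(\beta_I))=0$ if $e\notin\pi_I$ and $L_e(\phi(\beta_I))>0$ if $e\in\pi_I$ --- is, by linearity of $L_e\circ\phi$, exactly equivalent to the Claim inside the paper's proof (a weight $\sum_I\beta_I$ lands in $F_e$ if and only if no constituent path contains $e$), and the case analysis you anticipate does go through: for a vertical edge $e$, the common edge of the $\lambda^{(i)}_j$- and $\lambda^{(i+1)}_{j+1}$-boxes sitting at heights $[H,H+1]$, the value $L_e(\phi(\beta_I))$ is the indicator that $\pi_I$ has a horizontal step in column $j$ at height $\le H$ minus the indicator that it has one in column $j+1$ at height $\le H$; since a positive path is monotone, a horizontal step in column $j+1$ at height $\le H$ forces one in column $j$ at height $\le H$, so this difference lies in $\{0,1\}$ and equals $1$ precisely when the vertical stretch of $\pi_I$ at position $j$ covers $[H,H+1]$, i.e.\ when $e\in\pi_I$. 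After this shared combinatorial step, the paper proceeds differently: it gets $X_{F_e}\subseteq D(e)$ by comparing weight sets, then invokes Gonciulea--Lakshmibai's theory of toric varieties attached to distributive lattices (the paths avoiding $e$ are closed under meet and join) to conclude that $D(e)$ is reduced and irreducible, and finishes with $D(e)\subsetneq X_0$ plus a dimension count. You instead prove both inclusions directly: $Z\subseteq X_{F_e}$ from the lemma and the orbit--face correspondence, and $X_{F_e}\subseteq Z$ by a one-parameter-subgroup limit landing in $O(F_e)\cap Z$, then taking closures. Your route stays within standard toric geometry and avoids the Hibi-lattice input; the paper's route has the side benefit of certifying irreducibility and reducedness of the locus cut out by those equations, a device it reuses for Proposition \ref{propDefEqnTorforSchuGrass}.

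One tacit assumption in your limit argument needs to be filled in: you need the minimal weight in each factor $\mathbb{P}(\bigwedge^{n_i}\mathbb{C}^n)$ to be $0$, i.e.\ for every $i$ there must exist a positive path with $n_i$ horizontal steps avoiding $e$; otherwise the limit point would fail to lie in $Z$, and the identification of its nonvanishing lattice-point coordinates with $\Pi_\lambda\cap F_e$ (which is what places it in $O(F_e)$) would also break. This is true and easy to supply: the lowest path to $O_i$ (horizontal steps at height $0$, vertical steps at position $n_i$) avoids every effective horizontal edge and every vertical edge away from position $n_i$, while the highest path to $O_i$ (all vertical steps at position $0$) avoids the vertical edges at position $n_i$; alternatively, the facet $F_e$ contains a vertex of $\Delta$, vertices of Gelfand--Cetlin polytopes are lattice points, and your lemma applied to any monomial decomposition of such a lattice point produces avoiding paths of every required length. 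With that inserted, and the vertical-edge case of the lemma written out as above, your proof is complete.
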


\begin{proof}
It is sufficient to show the coincidence of both sides under the embedding in Proposition \ref{propEmbeddingsame}. The set of weights of global sections of $\mathcal{O}_{X_{F_e}}(b_{n_1},\cdots, b_{n_{k+1}})$ is the subset $\Upsilon_\lambda^e\subset \Upsilon_\lambda$ that satisfies $\phi(\Upsilon_\lambda^e)\subset F_e$. We claim
  $$\Upsilon_\lambda^e=\big\{\sum_I\beta_I~|~\mbox{there are } b_j \mbox{ sequences  } I\in{[n]\choose j}  \mbox{ for all } j;   \mbox{ none of  }  \pi_I \mbox{ contain } e\big\}.
 $$
Assuming this claim first, we denote by $D(e)$ the subvariety of $X_0$ defined by the equations on the right hand side in the statement.
Clearly, $D(e)$ is a torus-invariant closed subvariety of $X_0$ with respect to the maximal torus action on $X_0$, and $\Upsilon_\lambda^e$ is a subset of the  set of weights of global sections of $\mathcal{O}_{D(e)}(b_{n_1},\cdots, b_{n_{k+1}})$. Hence, $X_{F_e}\subset D(e)$ (though set-theoretically, a priori).   On the other hand, given an incomparable pair $(\pi_I, \pi_J)$, we observe that either of the positive paths $\pi_I, \pi_J$ contains edge $e$ if and only if so does   either of the positive paths $ \pi_{I\vee J}, \pi_{I\wedge J}$.  It follows that $\{\pi_I~|~ \pi_I \mbox{ does not contain } e\}$ is a finite distributive sublattice of $\{\pi_I\}_I$, and that $D(e)$ is the (reduced, irreducible) toric variety associated to such a distributive sublattice (see for instance   \cite[Theorem 4.3]{GoLa} for the standard fact about   toric varieties associated to distributive lattices). Clearly, $D(e)\subsetneq X_0$. Hence,  $X_{F_e}= D(e)$ as varieties.

It remains to prove the claim.
Indeed, if $e$ is a horizontal (resp. vertical) effective edge of $\Lambda$, then it is the common edge of $\lambda^{(i)}_j$-box and $\lambda^{(i-1)}_j$-box (resp. $\lambda^{(i+1)}_{j+1}$-box) for some $i, j$.
 We notice that  a positive path  $\pi_I$  does not contain $e$ if and only if  $\pi_I$ is   either above $\lambda^{(i)}_j$-box or below  $\lambda^{(i-1)}_j$-box (resp.  either above or below both of the two boxes).
Take  $\sum_I\beta_I\in \Upsilon_\lambda^e$. If $e$ is horizontal (resp. vertical),  then for every such  $\beta_I$, the $b_{ij}$-box always has value 0 (resp. the sum of the values of the boxes in the column between the $b_{ij}$-box and the bottom is equal to that between the $b_{i+1, j+1}$-box and the bottom).   It follows from the definition of $\phi$ that $\lambda^{(i)}_j=\lambda^{(i-1)}_j$ (resp. $\lambda^{(i)}_j=\lambda^{(i+1)}_{j+1}$). Hence, $\phi(\Upsilon_\lambda^e)\subset F_e$.
 Conversely, given $\chi\in F_e$, we  write $\psi(\chi)=\sum_I\beta_I$. With the same arguments above, we conclude that none of the  positive  paths $\pi_I$ contain $e$, so that $\psi(F_e)\subset \Upsilon^e_\lambda$.
 \end{proof}

\begin{thm}\label{thmToricDeg} For any positive path $\pi_I$, the subvariety  $\mathcal{D}_{p_I}:= \{p_I=0\}\cap \mathcal{X}$ of $\mathcal{X}$ is a flat subfamily, whose generic fiber
  is isomorphic to the   $W$-translated Schubert divisor $D_{p_I}$ of the partial  flag variety $X$, and whose fiber at $t=0$ is a  (reduced)   union of toric divisors of the Gelfand-Cetlin toric variety $X_0=X_\Delta$:
    {\upshape  $$\bigsqcup_{e\subset \pi_I \scriptsize\mbox{ is effective}}X_{F_e}.$$
}
\end{thm}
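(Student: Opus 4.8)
The plan is to analyze $\mathcal{D}_{p_I}$ fiberwise: first over $\mathbb{C}^*$, then extend across $t=0$ by flatness, and finally identify the central fiber combinatorially via Proposition \ref{proptoricdivisor}. Over $\mathbb{C}^*$, recall the observation that any intersection of $\mathcal{X}$ with Pl\"ucker coordinate hyperplanes is flat; concretely, the isomorphism $\mathcal{X}|_{t=1}\overset{\sim}{\longrightarrow}\mathcal{X}|_t$ given by $p_J\mapsto t^{N_J}p_J$ fixes the hyperplane $\{p_I=0\}$ (rescaling a coordinate does not change its zero locus), so that $\mathcal{D}_{p_I}|_{\mathbb{C}^*}\cong D_{p_I}\times\mathbb{C}^*$. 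This identifies the generic fiber with the $W$-translated Schubert divisor $D_{p_I}$ of Proposition \ref{propWSchubertdivisor}.

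Next I would extend flatly across $t=0$. One route is Proposition \ref{propHart}, which produces a unique flat extension of $\mathcal{D}_{p_I}|_{\mathbb{C}^*}$ over $\mathbb{C}$; I would then argue that $\mathcal{D}_{p_I}$ coincides with it, equivalently that $\mathcal{D}_{p_I}$ is already flat over $\mathbb{C}$. This holds because $p_I$ restricts to a nonzerodivisor on every fiber of $\varrho$: each fiber is integral ($X$ for $t\neq 0$, and $X_0$ for $t=0$) and is not contained in $\{p_I=0\}$, so $\mathcal{D}_{p_I}$ is a relative effective Cartier divisor on the $\mathbb{C}$-flat scheme $\mathcal{X}$. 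Applying the local criterion of flatness to $0\to\mathcal{O}_{\mathcal{X}}(-\mathcal{D}_{p_I})\overset{p_I}{\longrightarrow}\mathcal{O}_{\mathcal{X}}\to\mathcal{O}_{\mathcal{D}_{p_I}}\to 0$, where the left map is fiberwise injective, yields flatness of $\mathcal{D}_{p_I}$ over $\mathbb{C}$. Granting this, the fiber of $\mathcal{D}_{p_I}$ at $t=0$ is simply the scheme-theoretic intersection $\{p_I=0\}\cap X_0$, since restricting the defining equation to the central fiber commutes with taking the fiber.

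It then remains to identify $\{p_I=0\}\cap X_0$ with the union $\bigsqcup_{e\subset\pi_I\text{ effective}}X_{F_e}$. The support is immediate from Proposition \ref{proptoricdivisor}: a toric prime divisor $X_{F_e}$ is contained in $\{p_I=0\}$ precisely when $p_I$ is one of its defining coordinates, that is, exactly when the positive path $\pi_I$ contains $e$. Since $\{p_I=0\}\cap X_0$ is $T$-invariant and, by Krull's principal ideal theorem on the irreducible variety $X_0$, of pure codimension one, its components are exactly the $X_{F_e}$ with $e\subset\pi_I$.

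The step I expect to be the main obstacle is reducedness: that each component $X_{F_e}$ occurs with multiplicity one. This does not follow formally from flatness together with the reducedness (indeed irreducibility) of the generic fiber $D_{p_I}$, since flat limits of reduced schemes may acquire thickened or embedded structure. I would prove multiplicity one by computing the order of vanishing of $p_I$ along the generic point of $X_{F_e}$ through the toric dictionary reviewed earlier: transporting the computation to the Gelfand-Cetlin polytope via the bijection $\phi$, the relevant order is read off from the $0/1$ vector $\beta_I$ paired against the primitive functional cutting out $F_e$ (namely $\lambda^{(i)}_j-\lambda^{(i-1)}_j$ for a horizontal effective edge, or $\lambda^{(i)}_j-\lambda^{(i+1)}_{j+1}$ for a vertical one), which forces the value $1$. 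As $X_0$ is normal and the subscheme is locally principal, multiplicity one along every component gives reducedness. If the direct order-of-vanishing computation on the multi-projective model proves delicate, I would instead compare Hilbert polynomials: flatness gives that $\mathcal{D}_{p_I}|_{t=0}$ and the reduced union $\bigcup_e X_{F_e}$ share both support and Hilbert polynomial (the latter equal to that of $D_{p_I}$), which again forces the scheme structure to be reduced.
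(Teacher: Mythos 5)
Your proof is correct, but it takes a genuinely different route from the paper's at both key steps. For flatness across $t=0$, the paper does not use your relative-Cartier-divisor criterion: it first proves that $\mathcal{D}_{p_I}$ is \emph{irreducible} by a dimension count (every component of the hypersurface $\mathcal{D}_{p_I}\subset\mathcal{X}$ has dimension $\dim X_0$, whereas a component contained in a fiber could have dimension at most $\dim X_0-1$, because the central fiber $X_0\cap\{p_I=0\}$ is a torus-invariant divisor of pure dimension $\dim X_0-1$), and then concludes that $\mathcal{D}_{p_I}$ must coincide with the unique flat extension $\overline{\mathcal{D}_{p_I}|_{t\neq 0}}$ furnished by Proposition \ref{propHart}. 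Your argument --- that $p_I$ restricts to a nonzerodivisor on each integral fiber $X$ and $X_0$, so the Koszul sequence together with the local criterion gives flatness of $\mathcal{O}_{\mathcal{D}_{p_I}}$ over $\mathbb{C}$ --- is cleaner in that it yields at once that the central fiber is the scheme-theoretic intersection $X_0\cap\{p_I=0\}$, a fact the paper only reaches through the identification with the flat extension. For the reducedness of $X_0\cap\{p_I=0\}$, the paper simply asserts it (``we notice that \dots is a reduced torus-invariant divisor''), whereas you actually prove it: the order of vanishing of $p_I$ along $X_{F_e}$ is the lattice distance from $\phi(\beta_I)$, a $0/1$ vector in the box coordinates, to the facet $F_e$, hence equals $1$ when $e\subset\pi_I$ and $0$ otherwise; combined with the fact that a locally principal subscheme of the normal (hence $S_2$, indeed Cohen--Macaulay) variety $X_0$ has no embedded primes, generic multiplicity one gives reducedness. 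This divisor computation also settles in one stroke which toric divisors occur, closing the ``only if'' direction of your support step, which is not literally immediate from Proposition \ref{proptoricdivisor} alone (one must know that $p_I$ does not vanish identically on $X_{F_e}$ when $e\not\subset\pi_I$). One caveat: your fallback Hilbert-polynomial argument is circular as stated, since it presupposes that the reduced union $\bigcup_e X_{F_e}$ has the same Hilbert polynomial as $D_{p_I}$, which is essentially what is being proved; but your primary argument does not need it.
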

\begin{proof}It follows immediately from the aforementioned realization of $\mathcal{X}$ that $\mathcal{D}_{p_I}|_{t\neq 0}$ is flat over $\mathbb{C}^*$. (In fact, more is true:
     $p_I\mapsto t^{N_I}p_I$ induces an isomorphism $\mathcal{D}_{p_I}|_{t\neq 0}\cong D_{p_I} \times \mathbb{C}^*$.) We notice that $\mathcal{D}_{p_I}|_{t=0}=X_{\Delta}\cap \{p_I=0\}$ is a reduced torus-invariant divisor of $X_0$. In particular, it is of pure dimension $\dim X_0-1=\dim D_{p_I}$. Hence, $ \mathcal{D}_{p_I}$ is irreducible. (Otherwise, $\mathcal{D}_{p_I}$ would have a component, which is a hypersurface in $\mathcal{X}$ and whose image under $\varrho$ is a point. Such a component of dimension $(1+\dim D_{p_I})$ can only occur in $t=0$, resulting in a contradiction.)
Since  $\mathcal{D}_{p_I}$ is closed, it follows that the unique flat extension $\overline{\mathcal{D}_{p_I}|_{t\neq 0}}$ by Proposition \ref{propHart} coincides with $\mathcal{D}_{p_I}$. The description of the fiber of $\mathcal{D}_{p_I}$ at $t=0$ follows from the explicit defining equations of toric divisors as in Proposition \ref{proptoricdivisor}.
\end{proof}

\begin{remark}\label{geomexp}
 Apply  Theorem \ref{thmToricDeg} to the case $X=F\ell_n$ with $\pi_I=\pi_{n+1-i_0, n+2-i_0, \cdots, n}$, where $1\leq i_0\leq n-1$. Then   the facets of $\Delta$ occurring in the toric degeneration of the $W$-translated Schubert divisor $D_{p_I}=uX^{s_{i_0}}$ are precisely given by $\{\Gamma_{i_0-j,j}\mid 1\leq j\leq i_0\}$ in terms of the notation in \cite{KST}. In other words, it is consistent with  the representation of the divisor class
 $[uX^{s_{i_0}}]=\sigma^{s_{i_0}}$ by elements of polytope ring in \cite[Example 4.4]{KST}. Moreover, by induction and \cite[Proposition 3.2]{KST}, one can show that the following two collections of sets of facets coincide with each other:
  $$\{\{F_e\}_e\mid  \mbox{there exists   }   uX^{s_i} \mbox{ that degenerates to  }\bigsqcup_{e}X_{F_e} \};$$
    $$\{\{F_e\}_e\mid  \mbox{there exists   }   X^{s_i} \mbox{such that }\sum_e\pi([F_e])=\sigma^{s_i} \mbox{ in the sense of \cite[section 4.3]{KST}}\}.$$
\end{remark}
\subsection{Toric degeneration of $W$-translated Schubert varieties for $Gr(m, n)$}\label{subsecDefEqnGrassman}  Toric degeneration of  Schubert varieties of complex Grassmannians $Gr(m, n)=F\ell_{m; n}$ have been studied well.
Although it is known to the experts, a precise description of such degenerations in terms of faces of the Gelfand-Cetlin polytopes seems  to be missing in the literature. In this subsection, we will give precise descriptions for the (opposite) Schubert varieties of $Gr(m, n)$ as well as for certain $W$-translated Schubert varieties of $Gr(2,n)$.

\subsubsection{Indexing set by partitions}Schubert varieties of $Gr(m, n)$ are traditionally indexed by   the set of partitions  inside an $m\times (n-m)$ rectangle,
  $$\mathcal{P}_{m, n}:=\{\mu=(\mu_1, \cdots, \mu_m)\in \mathbb{Z}^{m}~|~ n-m\geq \mu_1\geq \mu_2\geq \cdots\geq \mu_m\geq 0\}, $$
which is equipped with a natural partial order $\leq$ as for vectors in $\mathbb{Z}^m$. There is an isomorphism of partially ordered sets
 $\big(\{\pi_I\}_I, \leq )\overset{\cong}{\longrightarrow} (\mathcal{P}_{m, n}, \leq)$ defined by
    $$ p_{i_1i_2\dots i_{m}}\mapsto \mu=(i_m-m, \cdots, i_2-2, i_1-1).$$
The partition $\mu$ can be directly read off from the ladder diagram by counting the number of boxes below the positive path  column-to-column from right to left. The complement $\mu^\vee:=(n-m-\mu_m, \cdots, n-m-\mu_1)$ can also be directly read off from the ladder diagram by counting the number of boxes above the positive  path column-to-column from left to right. 
  \begin{figure}[h]
  \caption{Partitions and faces associated to a positive path   in $\Lambda(5; 8)$}\label{figurepartitions}
  \bigskip
     \includegraphics[scale=1]{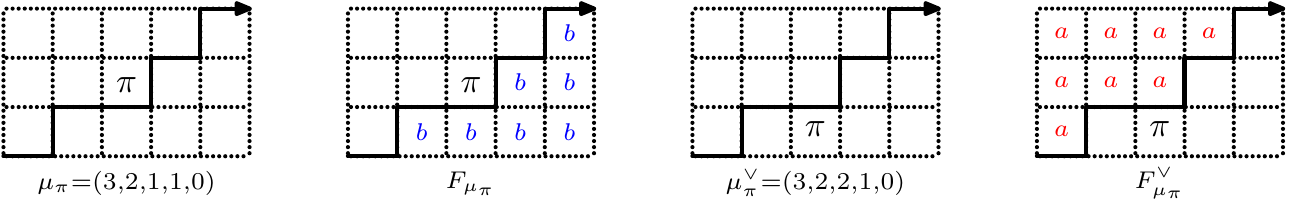}
  \end{figure}

\subsubsection{Defining equations of    toric  subvarieties for $\Lambda(m; n)$}
 Given  a partition $\mu=\mu_\pi$ corresponding to a positive path $\pi$, we define a face $F_{\mu_\pi}$ (resp. $F_{\mu_\pi}^\vee$) of $\Delta$ simply by letting  all the boxes below (resp. above) the path $\pi$ take the same value     $b:=\lambda^{(n)}_n$ (resp. $a:=\lambda^{(n)}_m$).  (See Figure \ref{figurepartitions} for an illustration.)
 By similar  arguments   to the proof of  Proposition \ref{proptoricdivisor}, we have

\begin{prop}\label{propDefEqnTorforSchuGrass}
The toric subvarieties  $X_{F_{\mu_\pi}}$ and   $X_{F_{\mu_\pi}^\vee}$, as subvarieties  of $X_0$ in  $\mathbb{P}(\bigwedge^{m}\mathbb{C}^n)$, are respectively    given by
  $$X_{F_{\mu_\pi}}=X_0\bigcap\{ p_I=0~|~  \pi_I\not\geq \pi\},\quad X_{F_{\mu_\pi}^\vee}=X_0\bigcap\{ p_I=0~|~  \pi_I\not\leq \pi\}.$$
\end{prop}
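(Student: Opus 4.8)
The plan is to mirror the proof of Proposition \ref{proptoricdivisor}, working throughout under the projective embedding of Proposition \ref{propEmbeddingsame} and comparing the two sides through their weights of global sections. First I would record the relevant geometric picture for the two-valued weight $\lambda=(a^m,b^{n-m})$ of $Gr(m,n)$: the Gelfand-Cetlin polytope is (affinely) an order polytope, so every vertex of $\Delta$ is the staircase pattern attached to a positive path $\pi_I$, carrying the value $b=\lambda^{(n)}_n$ on the boxes below $\pi_I$ and $a=\lambda^{(n)}_m$ on the boxes above. Such a vertex lies on the face $F_{\mu_\pi}$ (all boxes below $\pi$ equal to $b$) exactly when the region below $\pi$ is contained in the region below $\pi_I$, i.e.\ exactly when $\pi_I\geq\pi$; dually it lies on $F_{\mu_\pi}^\vee$ exactly when $\pi_I\leq\pi$. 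This identifies the vertices of $F_{\mu_\pi}$ with the principal filter $\mathcal{F}:=\{\pi_I\mid \pi_I\geq\pi\}$ and those of $F_{\mu_\pi}^\vee$ with the principal ideal $\{\pi_I\mid \pi_I\leq\pi\}$.

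Next, exactly as in Proposition \ref{proptoricdivisor}, the weights of $\mathcal{O}_{X_{F_{\mu_\pi}}}(b_{n_1},\cdots,b_{n_{k+1}})$ form the subset $\Upsilon_\lambda^{\mu_\pi}\subset\Upsilon_\lambda$ characterized by $\phi(\Upsilon_\lambda^{\mu_\pi})\subset F_{\mu_\pi}$, and I would establish the analogous combinatorial claim
$$\Upsilon_\lambda^{\mu_\pi}=\big\{\,{\textstyle\sum_I}\beta_I \;\big|\; \text{there are } b_j \text{ sequences } I\in{[n]\choose j} \text{ for all } j,\ \pi_I\geq\pi \text{ for every } I\,\big\}.$$
The inclusion $\supseteq$ is immediate from the vertex computation above, since each $\beta_I$ with $\pi_I\geq\pi$ has $\phi(\beta_I)$ supported on $F_{\mu_\pi}$. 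For $\subseteq$ I would invoke the greedy decomposition $\psi(\chi)=\sum_I\beta_I$ that peels off one path per column, exactly as in the omitted argument on page 10 of \cite{KoMi}: if $\chi\in F_{\mu_\pi}$ then every box below $\pi$ takes the minimal value $b$, which forces each extracted path $\pi_I$ to run weakly above $\pi$, i.e.\ $\pi_I\geq\pi$. This passage from the face equation to the order condition on all constituent paths is the technical heart of the argument and the step I expect to demand the most care.

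To conclude, set $D(\mu_\pi):=X_0\cap\{p_I=0\mid \pi_I\not\geq\pi\}$. The claim shows that $\Upsilon_\lambda^{\mu_\pi}$ is precisely the weight set of $D(\mu_\pi)$, so $X_{F_{\mu_\pi}}\subseteq D(\mu_\pi)$ as torus-invariant subschemes sharing the same multigraded Hilbert function. I would then verify that $\mathcal{F}$ is a distributive sublattice: being a principal filter it is closed under $\wedge$ and $\vee$, and for an incomparable pair $(\pi_I,\pi_J)$ the inequality $\pi_{I\wedge J}\leq\pi_I$ gives $\pi_{I\wedge J}\geq\pi\Rightarrow\pi_I\geq\pi$, equivalently $\pi_I\not\geq\pi\Rightarrow\pi_{I\wedge J}\not\geq\pi$, so the straightening relations restrict consistently after the complementary coordinates are set to zero. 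By \cite[Theorem 4.3]{GoLa}, $D(\mu_\pi)$ is therefore the reduced irreducible toric variety of $\mathcal{F}$; since it has the same dimension as the face $F_{\mu_\pi}$ and contains the irreducible variety $X_{F_{\mu_\pi}}$, the two coincide. The statement for $F_{\mu_\pi}^\vee$ follows verbatim after replacing the filter $\mathcal{F}$ by the principal ideal $\{\pi_I\mid \pi_I\leq\pi\}$, or equivalently by applying the same argument to the ladder diagram read in the opposite order.
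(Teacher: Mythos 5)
Your proposal follows essentially the same route as the paper's proof: the same weight-set claim for $\Upsilon_\lambda^{\mu_\pi}$ with both inclusions (your forward inclusion via vertices and backward inclusion via peeling off paths matches the paper's argument that a path violating $\pi_{I_j}\geq\pi$ would force a box on $\pi$ to exceed $b$, resp.\ drop below $a$), the same auxiliary variety $D(\mu_\pi)=X_0\cap\{p_I=0\mid \pi_I\not\geq\pi\}$, the same identification of it as the reduced irreducible toric variety of the distributive sublattice $\{\pi_I\mid\pi_I\geq\pi\}$ via \cite[Theorem 4.3]{GoLa}, and the same conclusion by dimension comparison. The one point you assert rather than verify is that $\dim D(\mu_\pi)=\dim_{\mathbb{R}}F_{\mu_\pi}$; the paper supplies this by computing the rank of the lattice $\{\pi_I\mid \pi_I\geq\pi\}$ (a maximal chain has length $\sum_{i=1}^m(n-m-\mu_i)=\dim_{\mathbb{R}}F_{\mu_\pi}$) and invoking the standard fact \cite[section 1]{Hibi} that the rank of a finite distributive lattice equals the complex dimension of its associated projective toric variety.
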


\begin{proof}
We notice that   $ \pi_I\not\geq \pi$ if and only if either  $\pi_I\leq \pi$ or the pair $(\pi_I, \pi_\pi)$ is incomparable.
Under the  embedding
    $X_0\hookrightarrow  \mathbb{P}\big(\mbox{Sym}^{a-b}(\bigwedge^{m}\mathbb{C}^n)\bigotimes \mbox{Sym}^{b}(\bigwedge^{n}\mathbb{C}^n)\big)$
as in Proposition \ref{propEmbeddingsame}, the set  $\Upsilon_\lambda$ of weights of global sections is given by
 $$\Upsilon_\lambda=\big\{ b\beta_{12\cdots n}+\sum_{j=1}^{a-b}\beta_{I_j}~|~  \pi_{I_j} \mbox{ is a positive path with } m \mbox{ horizontal steps for each } j\big\}.$$
Moreover, $\Upsilon_\lambda$ is sent onto the lattice points of the Gelfand-Cetlin polytope $\Delta$ 
 via the bijection $\phi$.
Consider the subsets
\begin{align*}
    \Upsilon_\lambda^{\mu_\pi}&:=\big\{ b\beta_{12\cdots n}+\sum_{j=1}^{a-b}\beta_{I_j}~|~  \pi_{I_j}\geq \pi  \mbox{ for each } j\big\}\cap  \Upsilon_\lambda\subset \Upsilon_\lambda,\\
    \Upsilon_\lambda^{\mu_\pi^\vee}&:=\big\{ b\beta_{12\cdots n}+\sum_{j=1}^{a-b}\beta_{I_j}~|~  \pi_{I_j}\leq \pi  \mbox{ for each } j\big\}\cap \Upsilon_\lambda\subset \Upsilon_\lambda.
     \end{align*}
The vector $\beta_{12\cdots n}$ takes value $1$ on the entries labeled by the boxes on the bottom row of the square $Q$, or value $0$ otherwise.
For any $\beta_{I_j}$ in an element $x$ of   $\Upsilon_\lambda^{\mu_\pi}$ (resp.   $\Upsilon_\lambda^{\mu_\pi^\vee}$), the nonzero entries of the vector $\beta_{I_j}$ occur only on some boxes right above the positive path $\pi_{I_j}$, and hence above $\pi$ (resp. and hence make contributions 1 for any box above $\pi$ in the image   $\phi(x)$). It follows immediately that $\phi(\Upsilon_\lambda^{\mu_\pi})\subset F_{\mu_\pi}$ and   $\phi(\Upsilon_\lambda^{\mu_\pi^\vee})\subset F_{\mu_\pi}^\vee$.
Now take any element $y$ of $F_{\mu_\pi}$ (resp. $F_{\mu_\pi}^\vee$). If $\pi_{I_j}\geq \pi$ (resp. $\pi_{I_j}\leq \pi$) did not hold for some $\beta_{I_j}$ in $\psi(y)$, then there must exist a box having a top (resp. bottom) edge on the path $\pi$ and being above (resp. below) $\pi_{I_j}$. It would make contribution $1$ (resp. $0$)
to such box in the image $\phi(\psi(y))$. Consequently, this box would take value larger than $b$ (resp. smaller than $a$) in $y=\phi(\psi(y))$, resulting a contradiction.
Hence, there is a bijection between  $\Upsilon_\lambda^{\mu_\pi}$ (resp. $\Upsilon_\lambda^{\mu_\pi^\vee}$) and the lattice points in $F_{\mu_\pi}$ (resp. $F_{\mu_\pi}^\vee$) via the map $\phi$.

On the other hand, we denote $X_0(F_{\mu_\pi}):= X_0\bigcap\{ p_I=0~|~  \pi_I\not\geq \pi\}$ and $X_0(F_{\mu_\pi}^\vee):= X_{F_{\mu_\pi}^\vee}=X_0\bigcap\{ p_I=0~|~  \pi_I\not\leq \pi\}$.
Clearly, they are both torus-invariant, and the sets of  weights of the global sections of the restriction of $\mathcal{O}_{X_0}(a-b, b)$ to them contain $\Upsilon_\lambda^{\mu_\pi}$ and
    $\Upsilon_\lambda^{\mu_\pi^\vee}$ respectively. Furthermore, they are the  toric varieties respectively associated to the finite distributive sublattices
 $\{\pi_I~|~ \pi_I\geq {\pi}\} $  and  $\{\pi_I~|~ \pi_I\leq {\pi}\}$. It follows that
     $X_{F_{\mu_\pi}}\subset X_0(F_{\mu_\pi})$ and $X_{F_{\mu_\pi}^\vee}\subset X_0(F_{\mu_\pi}^\vee)$ as reduced, irreducible varieties. The ranks of these  lattices are respectively given by
 \begin{align*}\max\{r-1~|~ \pi_{I_1}>\pi_{I_2}>\cdots>\pi_{I_r}\geq  \pi\}&=\sum_{i=1}^m (n-m-\mu_i)
 =\dim_{\mathbb{R}} F_{\mu_\pi}, \\
 \max\{r-1~|~ \pi_{I_1}<\pi_{I_2}<\cdots<\pi_{I_r}\leq  \pi\}&=\sum_{i=1}^m \mu_i=|\mu_\pi|=\dim_{\mathbb{R}} F_{\mu_\pi}^\vee.
 \end{align*}
It is well-known that the rank of a finite distributive lattice is equal to the   complex dimension  of the associated projective toric variety (see e.g.  \cite[section 1]{Hibi}).
Therefore, $X_0(F_{\mu_\pi}^\vee)$ (resp. $X_0(F_{\mu_\pi})$)  is of (co)dimension $|\mu|$, same as that 
of $X_{F_{\mu_\pi}^\vee}$ (resp. $X_{F_{\mu_\pi}}$). Hence, they must coincide with each other.
\end{proof}

\subsubsection{Toric degeneration of Schubert varieties}
The (opposite) Schubert varieties of $X=Gr(m, n)=SL(n, \mathbb{C})/P$ are indexed by the subset $W^P=\{w\in S_n~|~ w(1)<\cdots<w(m); \quad w(m+1)<\cdots w(n)\}$ of permutations. There are bijections of sets
 $$ W^P\overset{\cong}{\longrightarrow} \{p_\pi\}_{\pi} \overset{\cong}{\longrightarrow}\mathcal{P}_{m, n};\quad w\mapsto p_{w(1), \cdots, w(m)} \mapsto \mu_w=(w(m)-m, \cdots, w(1)-1).$$
Traditionally,  the (opposite) Schubert varieties  $X_{\mu_w}:=X_w$,    $X^{\mu_w}:=X^w$ are indexed by the partitions   $\mu_w\in\mathcal{P}_{m, n}$. We simply denote $\mu=\mu_w$.
The defining equations of (opposite) Schubert varieties, as subvarieties of $\mathbb{P}(\bigwedge^m\mathbb{C}^n)$ via the Pl\"ucker embedding, are just given by
(see e.g. Section 4.3.4 of \cite{LaRa})
  $$X_\mu=X\cap \{p_I=0~|~ \pi_I\not\leq  {\pi_\mu}\},\qquad X^\mu=X\cap \{p_I=0~|~ \pi_I\not\geq {\pi_\mu}\}. $$
 \begin{prop}\label{propTorDegSchuGrass} The subfamilies $\mathcal{X}_\mu:=\mathcal{X}\cap \{p_I=0~|~ \pi_I\not\leq {\pi_\mu}\}$ and $\mathcal{X}^\mu:=\mathcal{X}\cap \{p_I=0~|~ \pi_I\not\geq {\pi_\mu}\}$ are flat over $\mathbb{C}$. Moreover,
   $$\mathcal{X}_\mu |_{t=1}=X_\mu, \quad \mathcal{X}_\mu |_{t=0}= X_{F_{\mu}^\vee},\quad
     \mathcal{X}^\mu |_{t=1}=X^\mu, \quad \mathcal{X}^\mu |_{t=0}= X_{F_{\mu}}. $$
   Furthermore for any partition $\eta\in \mathcal{P}_{m, n}$ with $\eta\geq \mu$, the subfamily $\mathcal{X}^\mu\cap \mathcal{X}_\eta$ is   flat.
\end{prop}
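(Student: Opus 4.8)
The plan is to run the three-step argument from the proof of Theorem \ref{thmToricDeg}: establish flatness over $\mathbb{C}^*$, identify the scheme-theoretic special fiber and show it is reduced, irreducible, of the expected dimension, and then invoke the uniqueness of the flat extension in Proposition \ref{propHart}. For the first step, the scaling isomorphism $p_I\mapsto t^{N_I}p_I$ preserves every coordinate-vanishing locus, so it restricts to an isomorphism $(\mathcal{X}^\mu\cap\mathcal{X}_\eta)|_{\mathbb{C}^*}\cong (X^\mu\cap X_\eta)\times\mathbb{C}^*$. The generic fiber is thus the Richardson variety $X^\mu\cap X_\eta$, which (since $\mu\leq\eta$) is nonempty, reduced, and irreducible of dimension $|\eta|-|\mu|$; in particular the restricted family is flat over $\mathbb{C}^*$, and Proposition \ref{propHart} supplies a unique flat extension $\overline{(\mathcal{X}^\mu\cap\mathcal{X}_\eta)|_{\mathbb{C}^*}}$ over $\mathbb{C}$. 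It then suffices to check that this extension equals the full scheme-theoretic intersection $\mathcal{X}^\mu\cap\mathcal{X}_\eta$.

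Next I would describe the special fiber. Combining the defining equations, $(\mathcal{X}^\mu\cap\mathcal{X}_\eta)|_{t=0}=X_0\cap\{p_I=0\mid \pi_I\not\geq\pi_\mu \text{ or } \pi_I\not\leq\pi_\eta\}=X_{F_\mu}\cap X_{F_\eta^\vee}$, so the surviving Pl\"ucker coordinates are exactly those indexed by the interval $[\pi_\mu,\pi_\eta]$ of the distributive lattice $\{\pi_I\}_I\cong\mathcal{P}_{m,n}$. Since an interval in a distributive lattice is again a distributive sublattice, the meet/join compatibility used in Propositions \ref{proptoricdivisor} and \ref{propDefEqnTorforSchuGrass} applies verbatim: for an incomparable pair $(\pi_I,\pi_J)$ both lie in $[\pi_\mu,\pi_\eta]$ if and only if $\pi_{I\wedge J}$ and $\pi_{I\vee J}$ do. Hence the intersection is the reduced, irreducible, normal Hibi toric variety attached to this sublattice, whose dimension equals the length of a maximal chain in $[\pi_\mu,\pi_\eta]$. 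As $\mathcal{P}_{m,n}$ is graded with rank function $|\cdot|$, this length is $|\eta|-|\mu|$, matching the generic fiber.

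Finally, the concluding step is identical in spirit to Theorem \ref{thmToricDeg}: since the special fiber is reduced, irreducible, and of pure dimension $|\eta|-|\mu|=\dim(X^\mu\cap X_\eta)$, the closed subscheme $\mathcal{X}^\mu\cap\mathcal{X}_\eta$ can have no component supported entirely over $t=0$ (such a component would be a $(|\eta|-|\mu|+1)$-dimensional subvariety mapping under $\varrho$ to the single point $0$, which is impossible). Therefore $\mathcal{X}^\mu\cap\mathcal{X}_\eta$ is the closure of its restriction to $\mathbb{C}^*$, agrees with the unique flat extension of Proposition \ref{propHart}, and is flat over $\mathbb{C}$.

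The hard part will be the reducedness assertion in the middle step: one must verify that the full scheme-theoretic intersection, defined by imposing all the straightening/toric relations of $X_0$ together with the vanishing of the coordinates outside $[\pi_\mu,\pi_\eta]$, carries no nilpotents or embedded primes, so that the identification of the special fibers of the extension and of $\mathcal{X}^\mu\cap\mathcal{X}_\eta$ can force the two schemes to coincide. This is precisely the statement that the face ideal generated by $\{p_I\mid \pi_I\notin[\pi_\mu,\pi_\eta]\}$ is radical in the Hibi ring, with quotient the Hibi ring of the subinterval; it is most cleanly handled either through the algebra-with-straightening-law property of the Hibi ring or, as in Propositions \ref{proptoricdivisor} and \ref{propDefEqnTorforSchuGrass}, by matching the weights of the global sections of $\mathcal{O}_{X_0}(a-b,b)$ on the intersection with the lattice points of the face $F_\mu\cap F_\eta^\vee$ under the bijection $\phi$.
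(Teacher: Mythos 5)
Your proposal is correct and follows essentially the same route as the paper's proof: flatness over $\mathbb{C}^*$ via the scaling isomorphism $p_I\mapsto t^{N_I}p_I$, the unique flat extension of Proposition \ref{propHart}, identification of the reduced irreducible special fiber, and a dimension comparison to conclude that the extension coincides with the whole family. The only real difference is the middle step: the paper simply quotes Proposition \ref{propDefEqnTorforSchuGrass} for $\mathcal{X}_\mu|_{t=0}=X_{F_\mu^\vee}$, $\mathcal{X}^\mu|_{t=0}=X_{F_\mu}$, and for the Richardson case invokes the toric identity $X_{F_\eta^\vee}\cap X_{F_\mu}=X_{F_\eta^\vee\cap F_\mu}$, whereas you re-derive the special fiber directly as the Hibi toric variety of the interval $[\pi_\mu,\pi_\eta]$; since your meet/join compatibility for the interval is exactly what makes that sublattice distributive, this is the same argument in only slightly different clothing, and your fallback to the weight-matching method of Propositions \ref{proptoricdivisor} and \ref{propDefEqnTorforSchuGrass} is precisely what the paper does. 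One slip is worth correcting: your parenthetical claim that a component of $\mathcal{X}^\mu\cap\mathcal{X}_\eta$ supported over $t=0$ would have dimension $|\eta|-|\mu|+1$ is unjustified, because this intersection is cut out by far more equations than its codimension, so Krull's principal ideal theorem gives no such lower bound (that bound is available in the proof of Theorem \ref{thmToricDeg} only because $\mathcal{D}_{p_I}$ is a hypersurface in $\mathcal{X}$). The conclusion you need follows instead from the comparison you already set up: the flat limit of $(\mathcal{X}^\mu\cap\mathcal{X}_\eta)|_{\mathbb{C}^*}$ sits inside the special fiber $X_{F_\mu\cap F_\eta^\vee}$, which is irreducible of the same dimension $|\eta|-|\mu|$, so the flat limit fills it entirely and leaves no room for an extra component over $t=0$; this dimension-forcing argument is exactly how the paper concludes.
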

\begin{proof}
 We notice that  $\mathcal{X}_\mu|_{\mathbb{C}^*}$ is flat, isomorphic to $X_\mu\times \mathbb{C}^*$. Hence, it   extends to flat subfamily $\overline{\mathcal{X}_\mu|_{\mathbb{C}^*}}\subset \mathcal{X}_\mu$ with $\overline{\mathcal{X}_\mu|_{\mathbb{C}^*}}|_{t=0}\subset \mathcal{X}_\mu|_{t=0}$. By Proposition \ref{propDefEqnTorforSchuGrass}, $\mathcal{X}_\mu|_{t=0}= X_{F_{\mu}^\vee}$. In particular, it is reduced, irreducible, and of dimension $|\mu|=\dim X_\mu$. Since
    $\dim \overline{\mathcal{X}_\mu|_{\mathbb{C}^*}}|_{t=0}=   \dim \overline{\mathcal{X}_\mu|_{\mathbb{C}^*}}|_{t=1}=|\mu|$,  it follows that
     $\overline{\mathcal{X}_\mu|_{\mathbb{C}^*}}|_{t=0}=X_{F_{\mu}^\vee}$ and  $\overline{\mathcal{X}_\mu|_{\mathbb{C}^*}}=\mathcal{X}_\mu$.
      The arguments for $\mathcal{X}^\mu$ are the same.

      For  $\eta\geq \mu$, $X_{F_{\eta}^\vee}\cap X_{F_\mu}=X_{F_{\eta}^\vee\cap F_\mu}$ is a (reduced, irreducible) toric subvariety of dimension $|\eta|-|\mu|=\dim X^\mu\cap X_\eta$. Thus the last statement follows as well.
\end{proof}

\subsubsection{Toric degeneration of special $W$-translated  Schubert varieties in $Gr(2, n)$}
Let $C$ denote the cyclic permutation $(2,3,\cdots, n, 1)$. Here we study the toric degeneration of the $W$-translated Schubert varieties $C^k\cdot X^{(1,1)}$ of $Gr(2, n)$.
We remark that the permutation $C$ plays an interesting role in the study of mirror symmetry for complex Grassmannians \cite{Karp}. Note that $C$ is of order $n$, and that $C^0 \cdot X^{(1,1)}=X^{(1,1)}$ and $C^{n-1}\cdot X^{(1,1)}=X_{(n-3,n-3)}$ have been studied above.
The Gelfand-Cetlin polytope of $Gr(2, n)$ are parameterized by variables $\{\lambda_1^{(i)},\lambda_2^{(i+1)}\mid 1\leq i\leq n-2\}$.
Let $\Delta_{(k)}$ denote the face of $\Delta$ of codimension two defined by
$$\Delta_{(k)}=\Delta\cap\{\lambda_1^{(k)}=\lambda_1^{(k+1)},
\lambda_{2}^{(k)}=\lambda_2^{(k+1)}\},$$
where we denote $\lambda_2^{(1)}=b$ and $\lambda_1^{(n-1)}=a$ for conventions.
\begin{prop}\label{proptorforGr2n}
  For $1\leq k\leq n-2$,  the $W$-translated Schubert variety $C^k\cdot X^{(1,1)}$ of $X=Gr(2, n)$ degenerates to the toric subvariety of $X_0$ defined by the face $\Delta_{(k)}$.
\end{prop}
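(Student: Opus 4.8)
The plan is to follow the template of Propositions \ref{propDefEqnTorforSchuGrass} and \ref{propTorDegSchuGrass}: realize $C^k\cdot X^{(1,1)}$ as an explicit intersection of Pl\"ucker coordinate hyperplanes, produce the corresponding flat subfamily of $\mathcal{X}$, and identify its special fiber with the toric subvariety $X_{\Delta_{(k)}}$ associated to $\Delta_{(k)}$.

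First I would determine the $W$-translated Schubert divisors containing $C^k\cdot X^{(1,1)}$. For $Gr(2,n)$ the opposite Schubert variety $X^{(1,1)}$ corresponds to the Pl\"ucker coordinate $p_{23}$, and the defining equations $X^\mu=X\cap\{p_I=0\mid \pi_I\not\geq\pi_\mu\}$ give $X^{(1,1)}=X\cap\{p_I=0\mid 1\in I\}$, a copy of $Gr(2,n-1)$. Applying the transitive action $w\cdot D_{p_J}=D_{p_{w(J)}}$ from Proposition \ref{propWSchubertdivisor}, and using $C^k(1)=k+1$, I would deduce that $D_{p_J}\supseteq C^k\cdot X^{(1,1)}$ precisely when $k+1\in J$. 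Since every $W$-translated Schubert variety is scheme-theoretically the intersection of $X$ with all the $W$-translated Schubert divisors containing it, this yields
$$C^k\cdot X^{(1,1)}=X\cap\{p_I=0\mid k+1\in I\}.$$

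Next I would identify the special fiber combinatorially on the ladder diagram $\Lambda(2;n)$, a $2\times(n-2)$ rectangle whose left (resp. right) column of boxes carries the values $\lambda_1^{(r)}$ (resp. $\lambda_2^{(r+1)}$). The two equations cutting out $\Delta_{(k)}$ correspond to effective edges $e_1$ (for $\lambda_1^{(k)}=\lambda_1^{(k+1)}$) and $e_2$ (for $\lambda_2^{(k)}=\lambda_2^{(k+1)}$), so $\Delta_{(k)}=F_{e_1}\cap F_{e_2}$ is a face of codimension two. Tracing the positive path $\pi_{i_1 i_2}$ through $\Lambda(2;n)$ shows that it meets $e_1$ exactly when $i_1=k+1$ and meets $e_2$ exactly when $i_2=k+1$; hence the paths through $e_1$ or $e_2$ are precisely those with $k+1\in I$. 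Applying Proposition \ref{proptoricdivisor} to each facet and intersecting (the intersection of faces of $\Delta$ being again a face), or equivalently observing that $\{\pi_I\mid k+1\notin I\}$ is a distributive sublattice isomorphic to that of $Gr(2,n-1)$ and hence of rank $\dim\Delta_{(k)}=2(n-3)$, I would conclude
$$X_{\Delta_{(k)}}=X_0\cap\{p_I=0\mid k+1\in I\},$$
a reduced, irreducible toric subvariety.

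Finally, flatness runs exactly as in Proposition \ref{propTorDegSchuGrass}. Put $\mathcal{Y}:=\mathcal{X}\cap\{p_I=0\mid k+1\in I\}$. The scaling $p_I\mapsto t^{N_I}p_I$ identifies $\mathcal{Y}|_{\mathbb{C}^*}\cong (C^k\cdot X^{(1,1)})\times\mathbb{C}^*$, which is flat over $\mathbb{C}^*$, so Proposition \ref{propHart} provides its unique flat extension. Since the special fiber $\mathcal{Y}|_{t=0}=X_{\Delta_{(k)}}$ is reduced, irreducible and of dimension $2(n-3)=\dim C^k\cdot X^{(1,1)}$, no spurious component can occur over $t=0$; hence $\mathcal{Y}$ is that flat extension and realizes the asserted degeneration. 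The main obstacle is the combinatorial bookkeeping: correctly transporting the defining hyperplanes of $X^{(1,1)}$ under the cyclic permutation $C^k$, and matching the two edges of $\Delta_{(k)}$ to the path conditions $i_1=k+1$ and $i_2=k+1$, with due care at the extreme values $k=1$ and $k=n-2$, where one of $e_1,e_2$ is a roof edge governed by the boundary conventions $\lambda_2^{(1)}=b$ and $\lambda_1^{(n-1)}=a$.
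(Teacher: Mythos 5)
Your proposal is correct, and its skeleton coincides with the paper's own proof: both realize $C^k\cdot X^{(1,1)}$ scheme-theoretically as $X\cap\{p_I=0\mid k+1\in I\}$ (via $X^{(1,1)}=X\cap\{p_{1j}=0\mid 2\leq j\leq n\}$ and $C^k(1)=k+1$), both degenerate it inside the subfamily $\mathcal{X}\cap\{p_I=0\mid k+1\in I\}$ using Proposition \ref{propHart}, and both rest on the same combinatorial fact that the positive paths containing one of the two edges cutting out $\Delta_{(k)}$ are exactly those $\pi_I$ with $k+1\in I$ (the paper packages this as the ``direct verification'' that $\Delta_{(k)}=\bigcap_{j\neq k+1}\bigcup_{e\subset \pi_{k+1,j}}F_e$). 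The one step where you genuinely diverge is how non-reducedness of the limit is excluded. The paper identifies the central fiber only set-theoretically, so its flat limit is a priori a multiple of the toric variety of $\Delta_{(k)}$, and it kills the multiplicity by exhibiting a coordinate point of $\mathbb{P}^{{n\choose 2}-1}$ (nonzero coordinate $p_{ij}$ with $k+1\notin\{i,j\}$) at which the relevant scheme-theoretic intersections are visibly reduced, hence reduced generically along the component. You instead prove the stronger global statement that $X_0\cap\{p_I=0\mid k+1\in I\}$ is the reduced, irreducible toric variety of the distributive sublattice $\{\pi_I\mid k+1\notin I\}$ of rank $2(n-3)$, exactly in the style of the proofs of Propositions \ref{proptoricdivisor} and \ref{propDefEqnTorforSchuGrass}; multiplicity one is then automatic, since the flat limit is a closed subscheme of a reduced irreducible scheme of the same dimension. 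Your route costs a little more lattice bookkeeping but yields the scheme-theoretic defining equations of $X_{\Delta_{(k)}}$ as a byproduct and needs no separate multiplicity argument, whereas the paper's coordinate-point trick is shorter but establishes reducedness only generically, which is just enough for the cycle-theoretic conclusion.
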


\begin{proof}
   The opposite Schubert variety $X^{(1,1)}$ is given by the scheme-theoretical intersection $X\cap \{p_{1j}=0\mid 2\leq j\leq n\}$.
   Therefore  $C^{k}\cdot X^{(1,1)}$ is given by the scheme-theoretical intersection $X\cap \{p_{k+1,j}=0\mid 1\leq j\leq n, j\neq k+1\}$, and it degenerates to a (codimension two) closed subvariety of $X_0\cap \{p_{k+1,j}=0\mid 1\leq j\leq n, j\neq k+1\}$. By direct verifications, $\Delta_{(k)}$ is the set-theoretically given by   $\bigcap_{1\leq j\leq n, j\neq k+1}\bigcup_{e\subset p_{k+1, j}}F_e$. Hence, $C^{k}\cdot X^{(1,1)}$ degenerates to a priori a multiplicative copy of the (reduced, irreducible) toric subvariety defined by the face $\Delta_{(k)}$. On the other hand, the coordinate point by all coordinate hyperplanes of $\mathbb{P}^{{n \choose 2}-1}$ but one single $p_{ij}$ with $k+1\not\in\{i, j\}$ is obviously a reduced point in the scheme-theoretical intersection of
    both $C^k\cdot X^{(1,1)}$ and $X_0\cap \{p_{k+1,j}=0\mid 1\leq j\leq n, j\neq k+1\}$. It follows that the multiplicity is equal to one. That is, the statement follows.
   \end{proof}
\noindent   We remark that $C\cdot X^{(r, t)}$ degenerates to the union of more than one toric subvarieties of $X_0$ for general $n-2\geq r\geq t\geq 0$. For instance the subvariety $C\cdot X^{(2, 0)}$ of $Gr(2, 5)$ degenerates to three distinct toric subvarieties of $X_0\subset \mathbb{P}^9$, each of which is  isomorphic to $\mathbb{P}^4$.
\section{Applications: transversal intersections of Schubert varieties}
In this section,  we will study the transversality of intersections of $W$-translated Schubert varieties  by using  toric degenerations. We will  restate  Conjecture
 \ref{mainconj00} in Conjecture \ref{mainconj33}, which  leads to a  Littlewood-Richardson rule for complete flag varieties.  We will verify our conjecture in some cases.

\subsection{Transversal intersections by toric degenerations} The natural projection from $F\ell_n:=F\ell_{1,2,\cdots, n-1; n}$ to
$F\ell_{n_1, \cdots, n_k; n}$ induces an injective homomorphism of algebras $H^*(F\ell_{n_1, \cdots, n_k; n}, \mathbb{Z})\hookrightarrow H^*(F\ell_n,\mathbb{Z})$, which sends every Schubert class $\sigma^u_{P}$ of the former one to the Schubert class $\sigma^u_B$   of the latter one. Therefore, we will skip the subscripts $P, B$ whenever it is well understood, and will use the same notation for the corresponding Schubert structure constants. The Schubert classes $\{\sigma^u\}_{u\in S_n}$ form an additive $\mathbb{Z}$-basis of $H^*(F\ell_n,\mathbb{Z})$, so that for any $u_1,\cdots, u_m, v\in S_n$, we can write
$$\sigma^{u_1}\cup \cdots \cup \sigma^{u_m}\cup \sigma^{v}=\sum_{w\in S_n}N_{u_1,\cdots, u_m, v}^w\sigma^w.$$
We usually consider the case $m=1$ only, since it determines   the cases $m\geq 2$. The structure constant
$N_{u_1,\cdots, u_m, v}^w$ is non-negative, counting the number of intersection points of $g_1\cdot X^{u_1}\cap\cdots \cap g_m\cdot X^{u_m}\cap g_{m+1}\cdot X^v\cap X_w$ for generic $(g_1,\cdots, g_{m+1})\in \prod_{i=1}^{m+1} SL(n, \mathbb{C})$ if $\ell(w)=\ell(u_1)+\cdots+\ell(u_m)+\ell(v)$, or zero otherwise. One of the central problems in Schubert calculus is finding a Littlewood-Richardson rule, namely a manifestly combinatorial formula/algorithm for the Schubert structure constants. However, this problem is widely open except for complex Grassmannians and for two-step flag varieties \cite{Coskun-twostep,Buch-equivTwostep,BKPT}.

We will make the statement valid for any partial flag variety $X=SL(n,\mathbb{C})/P=F\ell_{n_1,\cdots, n_k; n}$.
 Denote by $V$ the set of vertices of  $\Delta$ for $X$, and let
 $$
   V^{X}:=\{z\in V\mid X_0(z)\in X\},$$
where    $X_0(z)$ denotes the   $0$-dimensional toric subvariety of $X_0$  corresponding to $z$; here we consider $X_0(z)$ as an element of the product of projective spaces that contains both $X_0$ and $X$.
We call   $[z_0, z_1,\cdots, z_N]$ in $\mathbb{P}^N$ a coordinate point if all $z_j$ are equal to $0$ but one.
\begin{lemma}\label{lemverticesGr}
  Let $X=Gr(m, n)$. Then  $\{X_0(z)\mid z\in V\}$   is the set  of the coordinate points of $\mathbb{P}^{{n\choose m}-1}$. In particular, $V^X=V$.
\end{lemma}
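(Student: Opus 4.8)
The plan is to identify the $0$-dimensional toric subvarieties $X_0(z)$ attached to the vertices $z\in V$ with the coordinate points of $\mathbb{P}^{\binom{n}{m}-1}=\mathbb{P}(\bigwedge^m\mathbb{C}^n)$, by combining the explicit facet equations of Proposition \ref{proptoricdivisor} with the elementary fact that every coordinate point already lies on $X_0$.

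First I would record the easy inclusion that every coordinate point $e_I=[\,e_{i_1}\wedge\cdots\wedge e_{i_m}\,]$, i.e. the point with $p_I=1$ and all other Pl\"ucker coordinates equal to $0$, lies on $X_0$. Substituting $e_I$ into each defining quadric $p_Kp_L-p_{K\vee L}p_{K\wedge L}$ attached to an incomparable pair $(\pi_K,\pi_L)$, the first term vanishes because $K\neq L$, and the second term vanishes because $K\vee L\neq K\wedge L$ for an incomparable pair; hence $e_I\in X_0$. Since $e_I$ is the class of a decomposable vector it lies in $X=Gr(m,n)$ as well, and it is plainly fixed by the torus $T_0$ of $X_0$ (the coordinate points are fixed by the whole diagonal torus of the ambient $\mathbb{P}^{\binom{n}{m}-1}$). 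As the $T_0$-fixed points of $X_0$ are exactly the $0$-dimensional toric subvarieties $\{X_0(z)\mid z\in V\}$, this already shows that every coordinate point occurs as some $X_0(z)$, and simultaneously that $X_0(z)\in X$ for each such $z$.

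For the reverse inclusion I would run the facet description backwards. Each vertex $z$ is the intersection of the facets $F_e$ containing it, so $X_0(z)=\bigcap_{e\,:\,z\in F_e}X_{F_e}$, and Proposition \ref{proptoricdivisor} rewrites each $X_{F_e}$ as $X_0\cap\{p_J=0\mid \pi_J \text{ contains } e\}$. Thus $X_0(z)$ is cut out of $X_0$ by setting $p_J=0$ for every positive path $\pi_J$ that meets some effective edge $e$ with $z\in F_e$. Writing $z=\phi(\beta_{I(z)})$ for the unique positive path $\pi_{I(z)}$ determining it, the computation in the proof of Proposition \ref{proptoricdivisor} shows $z\in F_e$ precisely when $\pi_{I(z)}$ avoids $e$; hence the edges $e$ with $z\notin F_e$ are exactly the edges lying on $\pi_{I(z)}$. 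Consequently a coordinate $p_J$ survives only if every edge of $\pi_J$ lies on $\pi_{I(z)}$, and since all positive paths for $Gr(m,n)$ have the same number of edges, an inclusion of edge sets forces $\pi_J=\pi_{I(z)}$. Therefore $X_0(z)=X_0\cap\{p_J=0\mid J\neq I(z)\}$ is the single coordinate point $e_{I(z)}$, giving the inclusion $\{X_0(z)\mid z\in V\}\subseteq\{\text{coordinate points}\}$. Combined with the previous paragraph the two sets coincide, and $V^X=V$ follows at once from the decomposability of the coordinate points.

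I expect the main obstacle to be the bookkeeping in the third step: confirming, through the weight-space/lattice-point dictionary established in the proof of Proposition \ref{proptoricdivisor}, that the facets through a vertex $z$ are precisely those $F_e$ with $e$ off the path $\pi_{I(z)}$, and hence that no second Pl\"ucker coordinate survives the intersection. Everything else is either the routine vanishing of the quadrics or the standard toric fact, already invoked in the excerpt, that vertices of $\Delta$ correspond bijectively to the torus-fixed points of $X_0$.
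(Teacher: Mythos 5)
Your opening paragraph is sound: at a coordinate point every monomial $p_Kp_L$ and $p_{K\vee L}p_{K\wedge L}$ attached to an incomparable pair vanishes (since $K\neq L$ and $K\vee L\neq K\wedge L$), coordinate points are decomposable hence lie in $X$, and they are torus-fixed, which gives $\{\mbox{coordinate points}\}\subseteq\{X_0(z)\mid z\in V\}$ together with the $V^X=V$ part; this matches the closing step of the paper's proof. The reverse inclusion, however, has two genuine gaps. First, you write ``$z=\phi(\beta_{I(z)})$ for the unique positive path $\pi_{I(z)}$ determining it'' as though this were supplied by Proposition \ref{proptoricdivisor}, but nothing in that proposition or its proof says that a vertex of $\Delta$ is determined by a single positive path. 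That statement is precisely the combinatorial core of the paper's argument: a vertex is an assignment of each box by $a=\lambda^{(n)}_m$ or $b=\lambda^{(n)}_n$, and the interlacing inequalities $\lambda^{(i+1)}_j\geq\lambda^{(i)}_j\geq\lambda^{(i+1)}_{j+1}$ force $0\leq \sharp\{i\mid \lambda_{j-1}^{(i)}=b\}\leq\sharp\{i\mid\lambda_{j}^{(i)}=b\}\leq n-m$, so that the $b$-region is a partition shape cut out by one positive path. You assume this step rather than prove it.

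The second gap is in your survival analysis, and it is load-bearing. Proposition \ref{proptoricdivisor} sets $p_J=0$ only when $\pi_J$ contains an \emph{effective} edge $e$ with $z\in F_e$; hence what actually follows is that $p_J$ survives iff every \emph{effective} edge of $\pi_J$ lies on $\pi_{I(z)}$, not ``every edge'' as you state. Your counting argument then collapses: although all positive paths have $n$ edges, they do \emph{not} have the same number of effective edges --- in $Gr(2,4)$ the path $\pi_{12}$ (bottom, then right) has a single effective edge, namely the roof edge meeting $L_2$, while $\pi_{13}$ has two interior ones. The implication ``all effective edges of $\pi_J$ lie on $\pi_{I(z)}$ $\Rightarrow$ $\pi_J=\pi_{I(z)}$'' is in fact true, but it requires an argument: one must examine the first point where $\pi_J$ leaves $\pi_{I(z)}$ and check that the departing edge is interior (hence effective), or, when the departure runs along the boundary of $\Lambda$, that $\pi_J$ is forced through one of the two effective roof edges meeting $L_1$ or $L_2$; those two roof edges are indispensable, since with interior edges alone the claim fails for the extreme boundary paths. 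Note that the paper never needs this delicate step: it argues forward from a vertex to its path and then concludes bijectivity by counting $\binom{n}{m}$ positive paths against $\binom{n}{m}$ coordinate points. Indeed, the cheapest repair of your proof is the same counting: your first paragraph gives $\binom{n}{m}\leq\sharp V$, the staircase argument (once proved) gives an injection $V\hookrightarrow\{\pi_I\}_I$ and hence $\sharp V\leq\binom{n}{m}$, and equality forces every $X_0(z)$ to be a coordinate point, with no survival analysis at all.
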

\begin{proof}
  Notice that a vertex  in the polytope $\Delta$ is an assignment of the boxes by either $a=\lambda_m^{(n)}$ or $b=\lambda_n^{(n)}$.
  Since  $\lambda^{(i+1)}_j\geq \lambda_j^{(i)}\geq \lambda^{(i+1)}_{j+1}$, for any $2\leq j\leq m$, we have the inequality
   $0\leq \sharp\{i\mid \lambda_{j-1}^{(i)}=b\}\leq \sharp\{i\mid \lambda_{j}^{(i)}=b\}\leq n-m$. In other words, it determines a unique partition and hence the corresponding positive path $\pi_I$. By Proposition \ref{proptoricdivisor}, the corresponding point in $X_0$ does not lie in the coordinate plane $\{p_I=0\}$ but lies in any other coordinate plane, and hence is a coordinate point. The number of positive paths is equal to ${n\choose m}$, which also equals the number of coordinate points of  $\mathbb{P}^{{n\choose m}-1}$.
   Therefore the correspondence is one-to-one, and hence the first statement follows. The second statement also follows immediately, by noting that every coordinate point satisfies all the defining equations of $X$, which are all sum of the form $p_Jp_K$ with $J\neq K$ up to a sign.
\end{proof}
\begin{cor}\label{coreqnvertices}
   Let $X=F\ell_{n_1,\cdots, n_k; n}$. For any $z\in V$, $X_0(z)$ is a coordinate point of $\mathbb{P}^{{n\choose n_1}-1}\times\cdots\times \mathbb{P}^{{n\choose n_k}-1}$.
\end{cor}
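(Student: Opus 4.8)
The plan is to reduce Corollary~\ref{coreqnvertices} for the general partial flag variety $F\ell_{n_1,\cdots,n_k;n}$ to the Grassmannian case already settled in Lemma~\ref{lemverticesGr}, by exploiting the product structure of the ambient space $\mathbb{P}(\bigwedge^{n_1}\mathbb{C}^n)\times\cdots\times\mathbb{P}(\bigwedge^{n_k}\mathbb{C}^n)$. First I would recall that a vertex $z$ of $\Delta=\Delta_\lambda$ is, just as in the Grassmannian case, an assignment of each box in the ladder diagram $\Lambda(n_1,\cdots,n_k;n)$ to one of the fixed values $\lambda^{(n)}_{n_l}$; the Gelfand--Cetlin inequalities force this assignment to be constant along columns up to the allowed jumps, so $z$ is determined by a collection of positive paths, one reaching each $O_l$. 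The point $X_0(z)$ in the product of projective spaces then has, in its $l$-th factor $\mathbb{P}(\bigwedge^{n_l}\mathbb{C}^n)$, homogeneous coordinates $\{p_I\}_{I\in\binom{[n]}{n_l}}$ determined by which positive paths $\pi_I$ lie above the relevant part of $z$, exactly as encoded by Proposition~\ref{proptoricdivisor}.

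The key step is to analyze each projection separately. For each index $l$ with $1\le l\le k$, there is a natural forgetful map from $X=F\ell_{n_1,\cdots,n_k;n}$ to the Grassmannian $Gr(n_l,n)$, sending a flag to its $n_l$-dimensional subspace; under the Pl\"ucker embeddings this is compatible with the projection of the ambient product onto the $l$-th factor $\mathbb{P}(\bigwedge^{n_l}\mathbb{C}^n)$. I would argue that the restriction of the vertex $z$ of $\Delta_\lambda$ to the sub-ladder-diagram $\Lambda(n_l;n)$ governing the $n_l$-th column is itself a vertex of the Gelfand--Cetlin polytope for $Gr(n_l,n)$, by the same column-monotonicity argument used in Lemma~\ref{lemverticesGr}: the inequalities $\lambda^{(i+1)}_j\geq\lambda^{(i)}_j\geq\lambda^{(i+1)}_{j+1}$ restrict to the relevant sub-array and force each value to be $a=\lambda^{(n)}_{n_l}$ or $b=\lambda^{(n)}_n$ in a monotone pattern, hence determine a single positive path $\pi_{I_l}$. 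Applying Lemma~\ref{lemverticesGr} to $Gr(n_l,n)$, the $l$-th coordinate of $X_0(z)$ is a coordinate point of $\mathbb{P}(\bigwedge^{n_l}\mathbb{C}^n)\cong\mathbb{P}^{\binom{n}{n_l}-1}$, i.e.\ vanishes on all Pl\"ucker coordinates except $p_{I_l}$.

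Finally I would assemble the factors: since $X_0(z)$ is a coordinate point in each factor $\mathbb{P}^{\binom{n}{n_l}-1}$, it is by definition a coordinate point of the product $\mathbb{P}^{\binom{n}{n_1}-1}\times\cdots\times\mathbb{P}^{\binom{n}{n_k}-1}$, which is precisely the assertion. The main obstacle I anticipate is not the Grassmannian input but the bookkeeping needed to verify that the restriction of a vertex $z$ of the full Gelfand--Cetlin polytope to the $n_l$-th ``slice'' is genuinely a vertex of the smaller polytope, and that this restriction is compatible with the forgetful projection at the level of Pl\"ucker coordinates via Proposition~\ref{proptoricdivisor}. Once that compatibility is pinned down, the corollary follows directly, and one should note that the vertices $z$ here need not all satisfy $X_0(z)\in X$ (so that $V^X\subsetneq V$ in general, unlike the Grassmannian case), which is consistent with the statement only claiming that $X_0(z)$ is a coordinate point of the ambient product.
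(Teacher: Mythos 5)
Your overall architecture --- project $X_0(z)$ to each factor $\mathbb{P}^{\binom{n}{n_l}-1}$, identify the image with a vertex of the Gelfand--Cetlin polytope of $Gr(n_l,n)$, and invoke Lemma~\ref{lemverticesGr} --- is exactly the paper's proof. But the step you yourself flag as the main obstacle is where the argument genuinely breaks: it is \emph{not} true that a vertex $z$ of $\Delta_\lambda$ restricts on the sub-ladder $\Lambda(n_l;n)$ to values in $\{a,b\}=\{\lambda^{(n)}_{n_l},\lambda^{(n)}_n\}$. At a vertex of the full polytope each entry equals one of the $k+1$ values $\lambda^{(n)}_{n_1}>\cdots>\lambda^{(n)}_{n_{k+1}}$, and intermediate values do occur inside the sub-ladder as soon as $k\geq 2$. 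Concretely, for $X=F\ell_{1,2;3}$ take the (regular) vertex $z$ with $\lambda^{(1)}_1=\lambda^{(2)}_1=\lambda^{(3)}_2$ and $\lambda^{(2)}_2=\lambda^{(3)}_3$. The sub-ladder $\Lambda(1;3)$ consists of the boxes $\lambda^{(2)}_1,\lambda^{(1)}_1$, on which $z$ takes the value $\lambda^{(3)}_2$, strictly between $a=\lambda^{(3)}_1$ and $b=\lambda^{(3)}_3$; the restricted pattern is an interior point of an edge of the $Gr(1,3)$ polytope, not a vertex. Worse, restriction is not even the correct combinatorial shadow of the projection: using Proposition~\ref{proptoricdivisor} one computes $\{X_0(z)\}=X_0\cap\{p_1=p_2=p_{12}=p_{23}=0\}$, so the image of $X_0(z)$ in the first factor is the coordinate point with $p_3\neq 0$, which corresponds to the vertex of the $Gr(1,3)$ polytope assigning $b$ to \emph{both} boxes --- not to the restriction of $z$.

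The repair can be made in either of two ways. Geometrically (this is in effect what the paper does): the projection to the $l$-th factor is equivariant for the torus acting on the Pl\"ucker coordinates, and $X_0(z)$ is a torus-fixed point of $X_0$; hence its image is a torus-fixed point of the Gelfand--Cetlin toric variety of $\Lambda(n_l;n)$, i.e.\ the point of some vertex of that polytope, and Lemma~\ref{lemverticesGr} identifies it as a coordinate point. Combinatorially: do not restrict the pattern but \emph{threshold} it --- since every entry of $z$ is one of the $\lambda^{(n)}_{n_i}$, the boxes with value $\geq\lambda^{(n)}_{n_l}$ are separated from those with value $\leq\lambda^{(n)}_{n_{l+1}}$ by a unique positive path $\pi_{I_l}$ with $n_l$ horizontal steps, and the $l$-th component of $X_0(z)$ is precisely the coordinate point with $p_{I_l}\neq 0$ (in the example above, this thresholding sends both boxes to $b$ and recovers $\pi_3$). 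Either substitute makes your factor-by-factor assembly, and your closing remark that $V^X\subsetneq V$ may occur while the corollary still holds, go through.
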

\begin{proof}
 The restriction of the natural projection   $\mathbb{P}^{{n\choose n_1}-1}\times\cdots\times \mathbb{P}^{{n\choose n_k}-1}\to\mathbb{P}^{{n\choose n_j}-1}$ to the Gelfand-Cetlin toric variety $X_0$ has image in the Gelfand-Cetlin toric variety with respect to $\Lambda(n_j;n)$. Moreover, it sends $X_0(z)$ to a point of the latter toric variety that corresponds to a vertex of the latter polytope. Therefore the statement follows from Lemma \ref{lemverticesGr}.
\end{proof}
\begin{prop}
 If $X=F\ell_n$, then $V^X$ consists of vertices of $\Delta$ such that for any $1\leq j\leq i<i+2\leq n$,   not all the equalities  $\lambda_j^{(i)}=\lambda_{j+1}^{(i+1)}=\lambda_{j}^{(i+1)}=\lambda_{j+1}^{(i+2)}$ hold.
\end{prop}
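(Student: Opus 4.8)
The plan is to identify $V^X$ with the set of vertices all of whose rows have pairwise distinct entries, and then to match that condition with the forbidden quadruple-equality. By Corollary \ref{coreqnvertices}, for $z\in V$ the point $X_0(z)$ is a coordinate point of $\mathbb{P}^{\binom{n}{1}-1}\times\cdots\times\mathbb{P}^{\binom{n}{n-1}-1}$, so it is given by a tuple $(I_1,\dots,I_{n-1})$ with $I_i\in\binom{[n]}{i}$, where $p_{I_i}$ is the unique nonvanishing Plücker coordinate on the $i$-th factor; and $z\in V^X$ exactly when $X_0(z)\in X$. Such a coordinate point represents the tuple of coordinate subspaces $V_i=\langle e_a\mid a\in I_i\rangle$, and, the within-level Plücker relations being automatic (as in the proof of Lemma \ref{lemverticesGr}) while the incidence relations express the flag condition, it lies on $X=F\ell_n$ iff these subspaces form a flag, i.e. iff $I_1\subseteq\cdots\subseteq I_{n-1}$. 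Equivalently, the coordinate points lying on $X$ are precisely the $T$-fixed coordinate flags of $F\ell_n$.

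Next I would read nestedness off the pattern $z=\{\lambda^{(i)}_j\}$. Since $z$ is a vertex, each $\lambda^{(i)}_j$ equals one of its two upper neighbours $\lambda^{(i+1)}_j,\lambda^{(i+1)}_{j+1}$, so the set of values occurring in row $i$ is contained in that occurring in row $i+1$. This chain of value-sets consists of genuine sets of sizes $1,2,\dots,n$ precisely when every row of $z$ has distinct entries, in which case it is a complete flag $S_1\subset\cdots\subset S_n=\{\lambda^{(n)}_1,\dots,\lambda^{(n)}_n\}$ of subsets, hence the value-set flag of a unique coordinate flag $w$, forcing $X_0(z)\in X$. Conversely a coordinate flag has distinct rows. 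Thus $V^X=\{z\in V\mid \text{every row of } z \text{ has distinct entries}\}$.

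It then remains to prove, for any Gelfand–Cetlin pattern, that all rows are distinct iff there is no $(i,j)$ with $1\le j\le i<i+2\le n$ and $\lambda^{(i)}_j=\lambda^{(i+1)}_{j+1}=\lambda^{(i+1)}_j=\lambda^{(i+2)}_{j+1}$; this is the only mechanical step. Each row is weakly decreasing by interlacing, so a repeated entry is an adjacent repeat $\lambda^{(k)}_j=\lambda^{(k)}_{j+1}=:\mu$, necessarily with $2\le k\le n-1$ (row $1$ has a single entry and row $n$ is strictly decreasing) and $1\le j\le k-1$. Taking $i=k-1$, the inequalities $\lambda^{(k)}_j\ge\lambda^{(k-1)}_j\ge\lambda^{(k)}_{j+1}$ and $\lambda^{(k)}_j\ge\lambda^{(k+1)}_{j+1}\ge\lambda^{(k)}_{j+1}$ squeeze $\lambda^{(i)}_j=\lambda^{(i+2)}_{j+1}=\mu$, yielding the forbidden quadruple-equality with $1\le j\le i<i+2\le n$; conversely any such equality already contains the row-$(i+1)$ repeat $\lambda^{(i+1)}_j=\lambda^{(i+1)}_{j+1}$. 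Combining the three steps proves the proposition.

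The step I expect to be the main obstacle is the translation of the second paragraph, namely certifying that $X_0(z)\in X$ forces all rows of $z$ to be distinct. The delicate point is that for a non-flag vertex the pattern is not obtained by sorting the values $\{\lambda^{(n)}_a\mid a\in I_i\}$ of its index sets (that naive reading is valid only once nestedness is known), so the $I_i$ cannot be recovered from the rows directly; the argument must instead use only the vertex property, that each entry sits at one of its two upper neighbours, together with the identification of the coordinate points on $X$ with the coordinate flags.
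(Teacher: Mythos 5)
Two of your three steps are fine: the identification of the coordinate points lying on $X$ with coordinate flags (equivalently, nested index sets $I_1\subseteq\cdots\subseteq I_{n-1}$) is correct, and is in fact cleaner than what the paper does, since injectivity of the Pl\"ucker embedding on each factor replaces any inspection of the quadratic relations; and the combinatorial equivalence ``some row has a repeated entry $\Leftrightarrow$ some forbidden quadruple equality holds'' is complete and correct --- it is the same interlacing squeeze that underlies the paper's setup of $k_1<k_2<k_3$. The genuine gap is the one you yourself flag: nothing in your argument computes which Pl\"ucker coordinates are nonzero at $X_0(z)$ in terms of the pattern $z$, and both of your remaining implications secretly rely on such a dictionary. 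By Proposition \ref{proptoricdivisor}, $p_I$ vanishes at $X_0(z)$ exactly when some effective edge $e\subset\pi_I$ has its equality satisfied at $z$, so the surviving index sets are read off from the geometry of the equality pattern, not from the row values. In fact your implicit identification is false even in the good case: for the $F\ell_3$ vertex with rows $(\lambda_2)$, $(\lambda_2,\lambda_3)$, $(\lambda_1,\lambda_2,\lambda_3)$ the value sets are $\{2\}\subset\{2,3\}$, but the nonvanishing coordinates are $p_3$ and $p_{13}$; the surviving sets are $w^{-1}(\{1,\dots,r\})$, not the value sets $w(\{1,\dots,r\})$. The conclusion $X_0(z)\in X$ survives only because $w^{-1}(\{1,\dots,r\})$ is again a nested family, but that is precisely what has to be proved, and neither direction of your asserted equality $V^X=\{z\in V\mid \mbox{all rows of } z \mbox{ distinct}\}$ is established without it; likewise ``conversely a coordinate flag has distinct rows'' presupposes the same unproved dictionary.

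The missing dictionary is exactly what the paper's proof supplies: at a vertex every box carries a value $\lambda^{(n)}_a$, and for each $r$ the boundary between the boxes with values in $\{\lambda^{(n)}_1,\dots,\lambda^{(n)}_r\}$ and the remaining ones is a positive path $\pi_{I_r}$ from $O_0$ to $O_r$, all of whose effective edges separate distinct values; this identifies $p_{I_r}$ as the unique surviving level-$r$ coordinate. With it, distinct rows give $I_r=w^{-1}(\{1,\dots,r\})$, which is nested, while a repeat $\lambda^{(s)}_j=\lambda^{(s)}_{j+1}=\lambda^{(n)}_r$ gives $|I_r\cap\{1,\dots,s\}|\geq j+1$ and $|I_{r-1}\cap\{1,\dots,s\}|\leq j-1$, contradicting $I_{r-1}\subset I_r$; the paper argues this direction instead by exhibiting the violated incidence relation $p_{k_1}p_{k_2k_3}-p_{k_2}p_{k_1k_3}+p_{k_3}p_{k_1k_2}=0$. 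Two smaller points: your claim that at a vertex each entry equals one of its two upper neighbours is true but not free --- it needs the fact that the active equalities at a vertex connect every box to the (strictly decreasing) top row, combined with the interlacing squeeze; and once the dictionary is in place, your route and the paper's become essentially the same argument in different clothing.
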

\begin{proof}
   Let   $z=\{\lambda_{c}^{(d)}\}$ be a vertex in $\Delta$.  For $1\leq r\leq n$, let us simply call $\{\lambda_{c}^{(d)}\mid \lambda_{c}^{(d)}=\lambda^{(n)}_r\}$ the $\lambda^{(n)}_r$-block. The inequalities for the Gelfand-Celtin patterns ensure that the $\lambda_r^{(n)}$-block and the $\lambda_{r+1}^{(n)}$-block are adjacent by a unique positive path $\pi_{I_r}$ with $|I_r|=r$ for any $1\leq r\leq n-1$. These positive paths defines a coordinate point of the product of projective spaces.

 For the given vertex $z$, we notice that equalities of the form  $\lambda_j^{(i)}=\lambda_{j+1}^{(i+1)}=\lambda_{j}^{(i+1)}=\lambda_{j+1}^{(i+2)}$ can at most occur in the  $\lambda^{(n)}_r$-block for some $2\leq r\leq n-1$. If they do occur in some $\lambda^{(n)}_r$-block,  we can assume $r=2$ without loss of generality. Let $k_j=\min\{i\mid \lambda_j^{(i)}=\lambda_2^{(n)}\}$ for $j\in \{1, 2\}$, and let $k_3=\max\{i+1\mid \lambda_1^{(i)}=\lambda_2^{(n)}\}$. It follows that $k_1<k_2<k_3$.    In this case, the coordinates of $X_0(z)$ satisfy  $p_{k_3}p_{k_1k_2}\neq 0$,
 $p_{k_1}p_{k_2k_3}=0=p_{k_2}p_{k_1k_3}$. Since one defining equation of $F\ell_n$ is given by $p_{k_1}p_{k_2k_3}-p_{k_2}p_{k_1k_3}+p_{k_3}p_{k_1k_2}=0$, $X_0(z)$ cannot belong to $F\ell_n$.

 Now we consider the case for any $i$, not all the equalities  $\lambda_1^{(i)}=\lambda_{2}^{(i+1)}=\lambda_{1}^{(i+1)}=\lambda_{2}^{(i+2)}$ hold for the given vertex $z$.
 The nonvanishing coordinates $p_{I_1}$ and $p_{I_2}$ of $X_0(z)$, where $|I_i|=i$ for $i\in \{1, 2\}$, satisfy either (a) $I_1=\{\hat k\}$, $I_2=\{\hat k, k\}$ or (b) $I_i=\{k\}$, $I_2=\{\hat k, k\}$, for some  $\hat k<k$. In particular, $p_{I_1}p_{I_2}$ never occurs in the defining equations with respect to incomparable pairs $(\pi_J, \pi_K)$ with $|J|=|I_1|$ and $|K|=|I_2|$. In other words, $X_0(z)$ satisfy all the corresponding defining equations. The arguments for the nonvanishing coordinates  $p_{I_a}$ and $p_{I_b}$ for general $1\leq a<b\leq {n-1}$ are similar.  It follows that   $X_0(z)$ satisfies all the defining equations of $F\ell_n$, and hence belongs to $F\ell_n$.
\end{proof}
\begin{remark}
 The vertices in the above statement are precisely the regular vertices of $\Delta$, namely the vertices that lie in exactly ${n(n-1)\over 2}$ facets of $\Delta$. There are exactly $n!=|S_n|$ regular vertices \cite[section 5.1]{Kiri}. The fibers of Gelfand-Cetlin system $F\ell_n\to \Delta$ were well studied in \cite{CKO, BMZ}, which are diffemorphic to the product of a real torus and a precisely described smooth manifold.  The regular vertices are exactly those points in $\Delta$ whose fibers are points.
\end{remark}

For any $v\in W^P\subset W=S_n$, the opposite Schubert variety $X^v$ of $X=F\ell_{n_1, \cdots, n_k; n}$
is given by the intersection of finitely many $W$-translated Schubert divisors $\{D_{p_{I_j}}\}_j$ with $X$.
For any $u\in W$, by $\pi_{u(I_j)}$ we mean the positive path in the ladder diagram of $X$ whose horizontal steps are precisely elements of the set $u(I_j)$. We consider the following subset of $\Delta$:
$$\Delta(u, v):=\bigcap_j\bigcup_{e\subset \pi_{u(I_j)}\atop e \mbox{ \tiny is effective}} F_e. $$
Notice that $X_v=w_0X^{\pi(w_0v)}$ where $w_0$ denotes the longest element of $W$, and $\pi(w_0v)$ denotes the unique element in $W^P$ such that $\pi(w_0v)^{-1} w_0v$ belongs to the subgroup $W_P$ generated by $\{s_j\mid j\in \{1,\cdots, n-1\}\setminus \{n_1,\cdots, n_k\}\}$.
\begin{thm}\label{thmTransInterviatoric}
    Let $v_1, \cdots, v_{m+1}, w\in W^P$ with $m\geq 1$ and  $\ell(w)=\sum_{i=1}^{m+1}\ell(v_i)$. Suppose that there exist
    $u_1,\cdots, u_{m+1}\in W$ such that  $\mathcal{S}:=\bigcap_{i=1}^{m+1} \Delta(u_i, v_i) \bigcap \Delta(w_0, \pi(w_0w))$ is a subset of $V^X$. Then $u_1 X^{v_1},\cdots, u_{m+1}X^{v_{m+1}}$,  $X_w$ intersect transversally, and $N_{v_1,\cdots, v_{m+1}}^w=\sharp \mathcal{S}$.
\end{thm}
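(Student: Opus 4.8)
The plan is to exhibit $G := u_1 X^{v_1} \cap \cdots \cap u_{m+1} X^{v_{m+1}} \cap X_w$ as the generic fiber of a flat family over $\mathbb{C}$ built from the Gonciulea--Lakshmibai degeneration, and to extract both transversality and the count $\sharp\mathcal{S}$ from the special fiber. Writing $X_w = w_0 X^{\pi(w_0 w)}$, all $m+2$ factors are $W$-translated Schubert varieties, hence (Proposition \ref{propWSchubertdivisor} and the ensuing discussion) each is the scheme-theoretic intersection of $X$ with a finite collection of Pl\"ucker coordinate hyperplanes; let $\mathcal{H}$ be the union of all these hyperplanes, so that $G = X \cap \mathcal{H}$ scheme-theoretically. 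Taking the same hyperplanes inside the total space $\mathcal{X}$ gives a closed subscheme $\mathcal{Z} := \mathcal{X} \cap \mathcal{H}$. Since $p_I \mapsto t^{N_I} p_I$ identifies $\mathcal{X}|_{\mathbb{C}^*} \cong X \times \mathbb{C}^*$ and carries each $\{p_I = 0\}$ to $\{p_I = 0\}$, we get $\mathcal{Z}|_{\mathbb{C}^*} \cong G \times \mathbb{C}^*$, flat over $\mathbb{C}^*$ with all fibers isomorphic to $G$. Proposition \ref{propHart} then produces the unique flat extension $\mathcal{Y} = \overline{\mathcal{Z}|_{\mathbb{C}^*}} \subseteq \mathcal{X}$ over $\mathbb{C}$ with $\mathcal{Y}|_{t=1} \cong G$.

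Next I would identify the special fiber set-theoretically. By Theorem \ref{thmToricDeg} (equivalently Proposition \ref{proptoricdivisor}) the divisor $\{p_I = 0\}$ meets $X_0 = X_\Delta$ in the union of facets $\bigcup_{e \subset \pi_I} F_e$. Intersecting over all hyperplanes in $\mathcal{H}$ and recording the Weyl translations defining the $u_i X^{v_i}$ and $X_w$, the scheme-theoretic special fiber $W_0 := \mathcal{Z}|_{t=0} = X_0 \cap \mathcal{H}$ is supported on $\bigcap_{i=1}^{m+1} \Delta(u_i, v_i) \cap \Delta(w_0, \pi(w_0 w)) = \mathcal{S}$. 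Since $\mathcal{S} \subseteq V^X$ consists of vertices, each $z$ gives a coordinate point $X_0(z)$ of $\prod_i \mathbb{P}(\bigwedge^{n_i}\mathbb{C}^n)$ (Corollary \ref{coreqnvertices}) lying on $X_0$ exactly because $z \in V^X$; thus the support of $W_0$ is the finite set $\{X_0(z) \mid z \in \mathcal{S}\}$, and in particular $W_0$, hence $G$, is zero-dimensional.

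The crux --- and the step I expect to be the main obstacle --- is to prove that $W_0$ is \emph{reduced}, consisting of exactly $\sharp\mathcal{S}$ reduced points. This is precisely where the hypothesis $\mathcal{S} \subseteq V^X$ is indispensable: each $z \in \mathcal{S}$ is a regular vertex of $\Delta$, lying on exactly $\dim_{\mathbb{C}} X$ facets, so $X_0$ is smooth at the torus-fixed point $X_0(z)$ with local coordinates $\{x_e\}$ indexed by the facets $F_e$ through $z$. In these coordinates each $\{p_I = 0\}$ of $\mathcal{H}$ restricts near $X_0(z)$ to the monomial $\prod_{e \ni z,\, e \subset \pi_I} x_e$. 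I would then argue, combining the membership of $z$ in each union $\bigcup_{e \subset \pi_{u_i(I_j)}} F_e$ with a count matching the number of conditions in $\mathcal{H}$ against the codimension $\dim_{\mathbb{C}} X$, that exactly one facet through $z$ lies on each relevant path and that these facets exhaust the local coordinates; the resulting monomials are then distinct single variables generating the maximal ideal, so $W_0$ is locally the reduced point $X_0(z)$. The delicate part is controlling the edge multiplicities along each path through $z$ so that no genuine product $x_e x_{e'}\cdots$ survives, since such a product would create a non-reduced or positive-dimensional local component and break the count. A second subtlety to dispatch here is that the flat limit $\mathcal{Y}|_{t=0}$ is a priori only a subscheme of $W_0$; once $W_0$ is shown reduced it has no embedded points, and since the term-order degeneration leaves no isolated component concentrated in $t=0$, one concludes $\mathcal{Y}|_{t=0} = W_0$.

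Finally I would conclude as follows. Flatness of $\mathcal{Y}$ forces $\operatorname{length}(G) = \operatorname{length}(\mathcal{Y}|_{t=0}) = \operatorname{length}(W_0) = \sharp\mathcal{S}$. Because $\mathcal{Y}|_{t=0}$ is reduced and zero-dimensional, the finite flat family $\mathcal{Y} \to \mathbb{C}$ is unramified, hence \'etale, near $t=0$; \'etale morphisms have reduced fibers, so the fibers over $\mathbb{C}^*$ --- all isomorphic to $G$ --- are reduced. Thus $G$ is $\sharp\mathcal{S}$ distinct reduced points; a reduced proper intersection of the expected dimension in the smooth variety $X$ is transversal (the differentials of the defining ideals then span the cotangent space, forcing each factor smooth and the meeting transverse at every point), so $u_1 X^{v_1}, \ldots, u_{m+1} X^{v_{m+1}}, X_w$ intersect transversally. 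Since each $u_i X^{v_i}$ represents $\sigma^{v_i}$ and $[X_w]$ is dual to $\sigma^{w}$, the cohomological intersection number is $\int_X \sigma^{v_1} \cup \cdots \cup \sigma^{v_{m+1}} \cap [X_w] = N_{v_1, \ldots, v_{m+1}}^w$; for a transversal intersection this equals the number of points, giving $N_{v_1, \ldots, v_{m+1}}^w = \sharp\mathcal{S}$ and recovering Kleiman's count \cite{Kleiman} for these specific Weyl translates.
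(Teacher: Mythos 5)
Your overall framework (degenerate the intersection inside the Gonciulea--Lakshmibai family, pass to the flat extension across $t=0$ via Proposition \ref{propHart}, and count at the special fiber) is the same as the paper's, but the step you yourself call the crux rests on a false premise. You assert that $\mathcal{S}\subseteq V^X$ forces each $z\in\mathcal{S}$ to be a regular (simple) vertex, so that $X_0$ is smooth at $X_0(z)$ with local coordinates indexed by the facets through $z$. That is not what $V^X$ means: by definition $V^X=\{z\in V\mid X_0(z)\in X\}$, i.e.\ the coordinate point $X_0(z)$ lies on the flag variety $X$ itself, viewed in the common ambient product of projective spaces (you misquote this as ``lying on $X_0$,'' which holds for every vertex). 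Only for $X=F\ell_n$ does the paper identify $V^X$ with the regular vertices; for Grassmannians, Lemma \ref{lemverticesGr} gives $V^X=V$, which contains non-simple vertices at which $X_0$ is singular. Concretely, for $Gr(2,4)$ the Gelfand--Cetlin polytope is $4$-dimensional with $6$ facets, the all-$a$ and all-$b$ vertices each lie on $5$ facets, and $X_0=\{p_{14}p_{23}=p_{13}p_{24}\}\subset\mathbb{P}^5$ is a quadric cone singular exactly at the two corresponding coordinate points $p_{12}$, $p_{34}$. Since the theorem is applied in the paper precisely to Grassmannian cases (Propositions \ref{propsp1Grk}, \ref{propspeciala0Gr}, \ref{propPierGr2n}), your local-coordinate argument collapses in the intended generality; and even at smooth points you concede (``the delicate part\dots'') that several facets through $z$ may lie on one path, so that $p_I$ restricts to a genuine product of local coordinates, with no argument ruling this out. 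The further claim that ``the term-order degeneration leaves no isolated component concentrated in $t=0$'' is also unjustified: components of $\mathcal{Z}=\mathcal{X}\cap\mathcal{H}$ inside the central fiber are exactly what can occur, which is why the flat limit is a priori a proper subscheme of $W_0$.

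The paper avoids all of this with a two-sided counting argument in which the hypothesis $\mathcal{S}\subseteq V^X$ plays a completely different role from the one you give it. Upper bound: the central fiber $\mathcal{Y}|_{t=0}$ is cut out by the coordinate hyperplanes $p_{I_j}=0$ together with equations of the squarefree monomial form $p_{J}p_{K}=0$ with $J\neq K$, so each of its (coordinate) points is reduced in this scheme-theoretic intersection; by flatness of the extension $\overline{\mathcal{Y}|_{\mathbb{C}^*}}$, the length of $Y=\bigcap_i u_iX^{v_i}$ is therefore at most $\sharp\mathcal{S}$. Lower bound: for each $z\in\mathcal{S}\subseteq V^X$, the point $X_0(z)$ lies on $X$ (this is exactly the content of $V^X$) and satisfies all the hyperplane equations, hence lies in $Y$; so $\sharp Y\geq\sharp\mathcal{S}$. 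Combining the two inequalities forces $Y$ to be reduced of cardinality exactly $\sharp\mathcal{S}$, which yields transversality and $N_{v_1,\cdots,v_{m+1}}^w=\sharp\mathcal{S}$ without any smoothness of $X_0$ at the vertices, any local analysis, or any identification of the flat limit with the full central intersection. This lower-bound mechanism is the idea missing from your proposal, and it is what makes the hypothesis $\mathcal{S}\subseteq V^X$ (rather than $\mathcal{S}\subseteq V$) indispensable.
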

\begin{proof} Denote by $u_{m+2}:=w_0$ and $v_{m+2}=\pi(w_0w)$. For $1\leq i\leq m+2$, we let  $$u_i\mathcal{X}^{v_i}:= \bigcap\nolimits_{I_{v_i}}\big(\{p_{u_i(I_{v_i})}=0\}\bigcap \mathcal{X}\big)$$
be the closed subvariety of $\mathcal{X}$ in which $\bigcap_{v_i}\{p_{I_{v_i}}=0\}$ defines the opposite Schubert variety $X^{v_i}$
of $X$.
Clearly, $\mathcal{Y}:=\bigcap_{i=1}^{m+2}u_i\mathcal{X}^{v_i}$  is a    closed subfamily of $\mathcal{X}$;
  $\mathcal{Y}|_{\mathbb{C}^*}$ is flat over $\mathbb{C}^*$, isomorphic to the trivial family $Y\times \mathbb{C}^*$ with $Y:=\bigcap_{i=1}^{m+2} u_iX^{v_i} $.  Hence, it admits a flat  extension $\overline{\mathcal{Y}|_{\mathbb{C}^*}}$,  sitting inside $\mathcal{Y}$,  by Proposition \ref{propHart}.
   It follows from the flatness and Theorem \ref{thmToricDeg} that
   $$\dim Y=\dim \overline{\mathcal{Y}|_{\mathbb{C}^*}}|_{t=1}=\dim \overline{\mathcal{Y}|_{\mathbb{C}^*}}|_{t=0}\leq \dim \mathcal{Y}|_{t=0}.$$
Notice that $\mathcal{Y}|_{t=0}$ is set-theoretically the toric subvariety of $X_0$ corresponding to $\mathcal{S}$.
In particular   if $\mathcal{S}$ is empty, then $\mathcal{Y}|_{t=0}=\emptyset$. Hence, we have $Y=\emptyset$ and consequently  $N_{v_1,\cdots, v_{m+1}}^w=0$ due to the geometrical meaning of the structure constants.

By Corollary \ref{coreqnvertices},  $X_0(z)$ is a coordinate point of the product of projective spaces, for any $z\in V$. By Theorem \ref{thmToricDeg}, the subspace of $X_0$ corresponding to $\mathcal{S}$ is given by  the set-theoretical  intersection of coordinate planes $\{p_I=0\}$ with $X_0$.
It follows that this subspace is given by intersections of the form $\{p_{I_j}=0\}_{j=1}^\ell$ and $\{p_{J_i}p_{K_i}=0\}_{i=1}^r$ with $J_i\neq K_i$ for all $i$. Hence, each point in the subvariety of $X_0$ corresponding to $\mathcal{S}$ is reduced in the scheme-theoretical intersection of these coordinate planes.
 Therefore, the cardinality of $Y$ with multiplicities counted is less than or equal to $\sharp \mathcal{S}$ due to the flatness of the family $\overline{\mathcal{Y}|_{\mathbb{C}^*}}$. Moreover, for any $z\in \mathcal{S}\subset V^X$, we have $X_0(z)\in X$ by definition. Hence, $X_0(z)$ is also in the intersection of the corresponding coordinate planes $\{p_I=0\}$ with $X$, so that $\sharp Y\geq \sharp \mathcal{S}$.
 It follows that $Y=\bigcap_{i=1}^{m+2} u_iX^{v_i} $  is a transversal intersection and $N_{v_1,\cdots, v_{m+1}}^w=\sharp Y=\sharp \mathcal{S}$. Moreover, $Y$ coincides with the subspace of $X_0$ corresponding to $\mathcal{S}$.
\end{proof}

\begin{remark}
  We expect that  $\mathcal{S}=\bigcap_{i=1}^{m+1} \Delta(u_i, v_i) \bigcap \Delta(w_0, \pi(w_0w))$ is always a subset of $V^X$, if $\mathcal{S}$ is a subset of $V$.
\end{remark}
\subsection{Towards a Littewood-Richardson rule} \label{covering} In this subsection, we will construct a modified partition of the following set
  $$A:=\{(u, v, w)\in S_n\times S_n\times S_n\mid \ell(w)=\ell(u)+\ell(v)\},$$
so that Conjecture \ref{mainconj00} makes sense.

Firstly, we divide the above set into the set
  $$ A_1:=\{(u, v, w)\in S_n\times S_n\times S_n\mid \ell(w)=\ell(u)+\ell(v), u\leq w, v\leq w\}$$
and its complement $A_0$. Here $u\leq w$ is with respect to the Bruhat order. It is a well-known fact that $N_{u, v}^w=0$ whenever $(u, v, w)\in A_0$.

Secondly, for every $(u, v, w)\in A_1$, we start with $B_{u, v, w}^{(1)}:=\{(u, v, w), (v, u, w)\}$, $B_{u, v, w}^{(2)}:=\{(u, v, w), ({w_0uw_0, w_0vw_0}, {w_0ww_0})\}$ and $B_{u, v, w}^{(3)}:=\{(u, v, w), ({u, w_0w}, {w_0v})\}$.
We notice the identities
 \begin{equation}\label{eqnid}
    N_{u, v}^w=N_{v, u}^w=N_{u, w_0w}^{w_0v}\mbox{  and  } N_{u, v}^w= N_{w_0uw_0, w_0vw_0}^{w_0ww_0}.
 \end{equation}
There is an automorphism of the Dynkin diagram of $SL(n, \mathbb{C})$ given by $\alpha_i\mapsto -w_0(\alpha_i)=\alpha_{n-i}$. It induces an automorphism of $X=F\ell_n=SL(n, \mathbb{C})/B$, which sends $X^{v}$ to $X^{w_0vw_0}$. Then induced automorphism of the cohomology $H^*(F\ell_n, \mathbb{Z})$ results in the last identity in \eqref{eqnid}.

There is another type of identities among  (equivariant) genus zero, three-point Gromov-Witten invariants for complete flag variety $G/B$ of general Lie types in \cite{LeLi, HuLi}. The special case of degree zero Gromov-Witten invariants recovers the recurrence formula in \cite{Knut}. For classical cohomology of $F\ell_n$, it says the following.
\begin{prop}\label{propreduction} {For any } $u, v, w\in S_n$ { and any } $1\leq i\leq n-1$,  we have
    {\upshape  $$
     N_{u, v}^w=\begin{cases}
      N_{us_i, v}^{ws_i},& \mbox{ if }\ell(us_i)>\ell(u), \ell(vs_i)>\ell(v), \ell(ws_i)>\ell(w),\\
           0,& \mbox{ if }\ell(us_i)>\ell(u), \ell(vs_i)>\ell(v), \ell(ws_i)<\ell(w).\\
   \end{cases}
  \hspace{0cm}$$}
  Consequently, we consider $$B_{u, v, w}^{(4)}:=\{(u, v, w)\}\cup \{(us_i, v, ws_i)\mid \ell(us_i)>\ell(u), \ell(vs_i)>\ell(v), \ell(ws_i)>\ell(w) \mbox{ for some } i\}.$$
   \end{prop}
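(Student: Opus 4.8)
The plan is to realize both sides as triple intersection numbers on $F\ell_n$ and to transport them to the partial flag variety obtained by forgetting the $i$-th subspace, using a $\mathbb{P}^1$-bundle. Write $\pi=\pi_i\colon F\ell_n\to X'$ for the natural projection onto $X'=SL(n,\mathbb{C})/P_i$, a $\mathbb{P}^1$-bundle, and let $\sigma'^y$ denote the opposite Schubert classes of $X'$, indexed by $y\in W^{P_i}=\{y\mid \ell(ys_i)>\ell(y)\}$. I will use three standard facts: (i) if $\ell(ys_i)>\ell(y)$ then $X^y=\pi^{-1}(X'^y)$, hence $\sigma^y=\pi^*\sigma'^y$ is a pullback; (ii) the Gysin pushforward satisfies $\pi_*\sigma^y=\sigma'^{ys_i}$ when $\ell(ys_i)<\ell(y)$, and $\pi_*\sigma^y=0$ when $\ell(ys_i)>\ell(y)$; and (iii) $\int_{F\ell_n}=\int_{X'}\circ\,\pi_*$ together with the projection formula. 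Combined with Poincar\'e duality $\int_{F\ell_n}\sigma^a\cup\sigma^b=\delta_{a,w_0b}$ (for $\ell(a)+\ell(b)=\binom{n}{2}$), these yield the starting identity $N_{u,v}^w=\int_{F\ell_n}\sigma^u\cup\sigma^v\cup\sigma^{w_0w}$.

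First I would push the triple product down to $X'$. Since $\ell(us_i)>\ell(u)$ and $\ell(vs_i)>\ell(v)$, fact (i) gives $\sigma^u=\pi^*\sigma'^u$ and $\sigma^v=\pi^*\sigma'^v$, so the projection formula produces
$$N_{u,v}^w=\int_{X'}\pi_*\!\big(\pi^*\sigma'^u\cup\pi^*\sigma'^v\cup\sigma^{w_0w}\big)=\int_{X'}\sigma'^u\cup\sigma'^v\cup\pi_*\sigma^{w_0w}.$$
The behaviour of $\pi_*\sigma^{w_0w}$ is governed by fact (ii) applied to $w_0w$. The elementary length identity $\ell(w_0w)=\binom{n}{2}-\ell(w)$ gives $\ell(w_0ws_i)<\ell(w_0w)\iff\ell(ws_i)>\ell(w)$, so $\pi_*\sigma^{w_0w}=\sigma'^{w_0ws_i}$ exactly when $\ell(ws_i)>\ell(w)$, and $\pi_*\sigma^{w_0w}=0$ exactly when $\ell(ws_i)<\ell(w)$. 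This immediately settles the second case of the proposition: under $\ell(ws_i)<\ell(w)$ one gets $N_{u,v}^w=0$.

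In the first case, $\ell(ws_i)>\ell(w)$, I am left with $N_{u,v}^w=\int_{X'}\sigma'^u\cup\sigma'^v\cup\sigma'^{w_0ws_i}$, where $w_0ws_i\in W^{P_i}$ since $\ell((w_0ws_i)s_i)=\ell(w_0w)>\ell(w_0ws_i)$. To recognise this as the structure constant $N_{us_i,v}^{ws_i}$, I would lift back up to $F\ell_n$: now $v,\,w_0ws_i\in W^{P_i}$, so $\sigma^v=\pi^*\sigma'^v$ and $\sigma^{w_0ws_i}=\pi^*\sigma'^{w_0ws_i}$, while $us_i$ has a right descent at $i$, whence fact (ii) gives $\pi_*\sigma^{us_i}=\sigma'^u$. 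The projection formula then reads
$$N_{us_i,v}^{ws_i}=\int_{F\ell_n}\sigma^{us_i}\cup\sigma^v\cup\sigma^{w_0ws_i}=\int_{X'}\pi_*\sigma^{us_i}\cup\sigma'^v\cup\sigma'^{w_0ws_i}=\int_{X'}\sigma'^u\cup\sigma'^v\cup\sigma'^{w_0ws_i},$$
which is exactly the expression obtained for $N_{u,v}^w$; the degree bookkeeping is consistent because $\ell(u)+\ell(v)=\ell(w)$ forces $\ell(us_i)+\ell(v)=\ell(ws_i)$.

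The geometric input is entirely standard, so I expect the main delicate point to be the index bookkeeping rather than any substantial geometry. One must track carefully the membership conditions $y\in W^{P_i}$ (right ascents versus descents at $i$), the translation of $\ell(w_0ws_i)$ versus $\ell(w_0w)$ through multiplication by $w_0$, and the correct Poincar\'e-dual pairing $\sigma^a\leftrightarrow\sigma^{w_0a}$. Keeping the left-versus-right descent conventions consistent across the pullback fact (i) and the pushforward fact (ii) is where index errors would most easily creep in, and is the step I would verify most carefully.
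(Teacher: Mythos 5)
Your proof is correct, but it takes a genuinely different route from the paper, which in fact gives no proof at all: Proposition \ref{propreduction} is quoted there as the degree-zero, classical-cohomology specialization of identities among (equivariant) genus-zero three-point Gromov--Witten invariants of $G/B$ established in \cite{LeLi,HuLi}, recovering the recurrence of \cite{Knut}. Your argument is instead a self-contained push--pull computation along the $\mathbb{P}^1$-bundle $\pi\colon F\ell_n\to SL(n,\mathbb{C})/P_i$, and I checked that the inputs and bookkeeping are sound: for $y$ with $\ell(ys_i)>\ell(y)$ one indeed has $X^y=\pi^{-1}(X'^y)$ and hence $\sigma^y=\pi^*\sigma'^y$; the Gysin formula $\pi_*\sigma^y=\sigma'^{ys_i}$ (right descent at $i$) versus $\pi_*\sigma^y=0$ (right ascent) is the standard BGG/divided-difference fact; the translation $\ell(w_0ws_i)<\ell(w_0w)\iff\ell(ws_i)>\ell(w)$ is right; and your two projection-formula computations land on the same integral $\int_{X'}\sigma'^u\cup\sigma'^v\cup\sigma'^{w_0ws_i}$, which settles the first case, while the vanishing of $\pi_*\sigma^{w_0w}$ when $\ell(ws_i)<\ell(w)$ settles the second. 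As for what each approach buys: yours is elementary and, since the three facts you use hold for $G/B\to G/P_i$ in arbitrary Lie type, it actually proves the classical statement in that generality; the paper's citation instead situates the identity inside a much stronger family of quantum/equivariant identities, at the price of invoking that machinery as a black box. A nice feature of your write-up is the final lift back to $F\ell_n$ to identify $\int_{X'}\sigma'^u\cup\sigma'^v\cup\sigma'^{w_0ws_i}$ with $N_{us_i,v}^{ws_i}$, which avoids having to discuss the Poincar\'e pairing and minimal coset representatives on $X'$ itself.
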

In this way, we obtain a covering of $A$, given by
  $$\mathcal{U}_0:=\{A_0\}\bigcup \{B_{u, v, w}^{(j)}\mid (u, v, w)\in A_1, 1\leq j\leq 4\}$$
It defines a consistent relation  $R_0\subset A\times A$, and hence gives rise to an equivalence relation $R_1\subset A\times A$ by letting $R_1$ be the transitive closure of $R_0$. The quotient space $\mathcal{U}_1:=A/R_1$ is a partition of $A$ with $A_0$ being an equivalence class in $\mathcal{U}_1$. Now we set
 $$C:=\{[(\hat u, \hat v, \hat w)]\in \mathcal{U}_1\setminus \{A_0\}\left| {\mbox{there is }(u, v, w)\in [(\hat u, \hat v, \hat w)] \mbox{ such that all the inequalities} \atop \ell(us_i)>\ell(u), \ell(vs_i)>\ell(v), \ell(ws_i)<\ell(w) \mbox{hold for some }i}\right.\}.$$
It follows that
   $$\mathcal{U}:=\Big(\mathcal{U}_1\setminus (C\cup \{A_0\})\bigcup \{A_0\cup \cup_{\alpha\in C} \alpha\}$$
is still a partition of $A$. Moveover, $N_{u_1, v_1}^{w_1}=N_{u_2, v_2}^{w_2}$ whenever $[(u_1, v_1, w_1)]=[(u_2, v_2, w_2)]$; $N_{u_1, v_1}^{w_1}=0$ whenever $(u_1, v_1, w_1)\in A_0\cup \cup_{\alpha\in C} \alpha$.

Finally we construct $\hat{\mathcal{U}}$ as follows. We say that $u\in S_n$ satisfies propery $(\star)$ at level $m$, if
$u$ can be written as $u=u_1u_2\cdots u_m$ with $\ell(u_i)\geq 1$ and
 the sets  $\Xi_i:=\{s_k\mid s_k \mbox{ occurs in a reduced expression of } u_i\}$ satisfy  $\Xi_i\cap \Xi_j=\emptyset$ and $ss'=s's$ for any $s\in \Xi_i, s'\in \Xi_j$, whenever $i\neq j$. We have the following equality.
 \begin{prop}\label{propred}
   Let $u\in S_n$. If $u=u_1u_2$ satisfies property $(\star)$ at level $2$, then $\sigma^{u_1}\cup \sigma^{u_2}=\sigma^u$.
\end{prop}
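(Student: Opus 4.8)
The plan is to prove the stronger, symmetric identity $\mathfrak{S}_{u_1}\cdot\mathfrak{S}_{u_2}=\mathfrak{S}_{u_1u_2}$ for the Schubert polynomials $\mathfrak{S}_w\in\mathbb{Z}[x_1,\dots,x_{n-1}]$ representing the classes $\sigma^w$, from which the cohomological statement $\sigma^{u_1}\cup\sigma^{u_2}=\sigma^{u_1u_2}$ follows by specialization. First I would record two structural consequences of property $(\star)$. Writing $\Xi_1,\Xi_2$ for the supports of $u_1,u_2$, the disjoint-and-commuting condition forces $\Xi_1\subseteq\{s_k: k\le i-2\}\cup\{s_k: k\ge i+2\}$ for every $s_i\in\Xi_2$, so $\Xi_1$ generates permutations of $\{1,\dots,i-1\}$ and of $\{i+2,\dots,n\}$; hence $u_1$ fixes both $i$ and $i+1$, and symmetrically $u_2$ fixes $j,j+1$ whenever $s_j\in\Xi_1$. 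Thus $u_1$ and $u_2$ move disjoint sets of positions, so they commute and their lengths add, $\ell(u_1u_2)=\ell(u_1)+\ell(u_2)$ (the length in the standard parabolic $W_{\Xi_1}\times W_{\Xi_2}$ coincides with the ambient length).

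The key consequence is a symmetry statement: since $\mathfrak{S}_w$ is symmetric in $x_i,x_{i+1}$ exactly when $w(i)<w(i+1)$, equivalently when $\partial_i\mathfrak{S}_w=0$, the fact that $u_1$ fixes $i,i+1$ for each $s_i\in\Xi_2$ shows that $\mathfrak{S}_{u_1}$ is symmetric in $x_i,x_{i+1}$ for all such $i$, and likewise $\mathfrak{S}_{u_2}$ is symmetric in $x_i,x_{i+1}$ for every $s_i\in\Xi_1$. I would then induct on $N=\ell(u_1u_2)$, computing $\partial_i(\mathfrak{S}_{u_1}\mathfrak{S}_{u_2})$ for each simple reflection via the Leibniz rule $\partial_i(fg)=(\partial_i f)\,g+(s_i f)(\partial_i g)$. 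If $s_i\in\Xi_1$, then $\partial_i\mathfrak{S}_{u_2}=0$ and $s_i\mathfrak{S}_{u_2}=\mathfrak{S}_{u_2}$, so the rule collapses to $(\partial_i\mathfrak{S}_{u_1})\mathfrak{S}_{u_2}$; since $s_i$ commutes with $u_2$ one has $(u_1u_2)s_i=(u_1s_i)u_2$ with supports still separated, and the inductive hypothesis identifies this product with $\mathfrak{S}_{(u_1u_2)s_i}$ (or with $0$ when $u_1s_i>u_1$). The case $s_i\in\Xi_2$ is symmetric, and when $s_i\notin\Xi_1\cup\Xi_2$ both factors are symmetric in $x_i,x_{i+1}$, forcing $\partial_i(\mathfrak{S}_{u_1}\mathfrak{S}_{u_2})=0$, in agreement with $(u_1u_2)s_i>u_1u_2$.

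At this point $P:=\mathfrak{S}_{u_1}\mathfrak{S}_{u_2}$ and $\mathfrak{S}_{u_1u_2}$ are homogeneous of the same degree $N$ with equal image under every $\partial_i$. To conclude $P=\mathfrak{S}_{u_1u_2}$ I would invoke the standard fact that the Schubert coefficients of a homogeneous degree-$N$ polynomial are recovered by the length-$N$ iterated operators $\partial_w$ with $\ell(w)=N$; matching all first-order $\partial_i$ together with the normalization $\mathfrak{S}_e=1$ then forces every coefficient to agree, giving $N_{u_1,u_2}^w=\delta_{w,u_1u_2}$. I expect this uniqueness step to be the only delicate point: a symmetric polynomial (for instance a Schur polynomial $\mathfrak{S}_v$ with $v$ Grassmannian) is annihilated by every $\partial_i$, so equality of all first-order divided differences does not by itself yield $P=\mathfrak{S}_{u_1u_2}$, and one genuinely needs the higher iterated operators, i.e. the full downward induction to degree $0$.

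As a cross-check on the base case $\ell(u_1)=1$, say $u_1=s_i$, Monk's formula writes $\sigma^{s_i}\cup\sigma^{u_2}$ as a sum of $\sigma^{u_2t_{ab}}$ over cover transpositions $t_{ab}$ with $a\le i<b$. Because $u_2$ preserves $\{1,\dots,i\}$ and $\{i+1,\dots,n\}$, one has $u_2(a)\le i$ for $a\le i$ and $u_2(b)\ge i+1$ for $b\ge i+1$; a short analysis of the covering condition then shows that the intermediate values $u_2(i)=i$ and $u_2(i+1)=i+1$ obstruct every candidate except $t_{i,i+1}$, so the unique surviving term is $\sigma^{u_2s_i}=\sigma^{s_iu_2}=\sigma^{u_1u_2}$. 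This both anchors the induction and confirms the expected outcome.
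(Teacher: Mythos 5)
Your proof is correct, and it takes a genuinely different route from the paper's. The paper stays entirely at the level of structure constants: since $N_{u_1,u_2}^w\neq 0$ forces $u_1\le w$ and $u_2\le w$ in Bruhat order, property $(\star)$ together with $\ell(w)=\ell(u_1)+\ell(u_2)$ leaves $w=u_1u_2$ as the only candidate term, and the recursion of Proposition \ref{propreduction} (stripping the simple reflections of $u_2$ one at a time) then gives $N_{u_1,u_2}^{u_1u_2}=N_{u_1,\mathrm{id}}^{u_1}=1$. You instead prove the stronger polynomial identity $\mathfrak{S}_{u_1}\mathfrak{S}_{u_2}=\mathfrak{S}_{u_1u_2}$ by induction on $\ell(u_1u_2)$, using the Leibniz rule for divided differences together with the observation that each factor is symmetric in the pair of variables attached to any simple reflection in the other factor's support. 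The paper's route buys brevity, since Proposition \ref{propreduction} is already established there; yours buys independence from that recursion and a stable statement valid in $H^*(F\ell_m)$ for every $m\ge n$ simultaneously, with Monk's formula as a consistency check on the base case. Your closing uniqueness step is sound: once $\partial_i P=\partial_i\mathfrak{S}_{u_1u_2}$ for all $i$, every iterated operator $\partial_w$ takes the same value on both polynomials, and these values determine the Schubert coefficients of a homogeneous polynomial. One small correction to the caveat you raised: a Grassmannian Schubert polynomial with descent at $k$ is \emph{not} killed by $\partial_k$, so it is not a counterexample; the polynomials annihilated by all $\partial_i$ are exactly the fully symmetric ones, and since $P-\mathfrak{S}_{u_1u_2}$ lies in $\mathbb{Z}[x_1,\dots,x_{n-1}]$ and is homogeneous of positive degree, full symmetry alone already forces it to vanish (a symmetric polynomial not involving $x_n$ must be constant). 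So the delicate point you flagged resolves even more easily than you feared, by either route.
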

\begin{proof}
 For $w\in S_n$, $N_{u_1, u_2}^w\neq 0$ only if $u_i\leq w$ with respect to the Bruhat order for $i=1, 2$. That is, $u_i$ can be obtained from a subexpression of a reduced expression of $w$.
 Then it follows from the definition of property $(\star)$ that $w=u_1u_2$ and  $\ell(w)=\ell(u_1)+\ell(u_2)$. By Proposition \ref{propreduction}, we conclude $N_{u_1, u_2}^{u_1u_2}=N_{u_1, \rm id}^{u_1}=1$. Therefore the statement follows.
\end{proof}
\noindent We remark that many Schubert classes cannot be represented as a positive multiple of Schubert classes in $H^{>0}(F\ell_n)$. For instance, $\sigma^{s_1s_2s_3}$ does not satisfy property $(\star)$ at level $2$ nor at level $3$.

Now for any $[(\hat u, \hat v, \hat w)]\in \mathcal{U}\setminus \{A_0\cup \cup_{\alpha\in C} \alpha\}$, we check every $(u, v, w)\in [(\hat u, \hat v, \hat w)]$. If $u$ satisfies property $(\star)$ at some level larger than $1$, then we take the maximal level $m$  at which $u=u_1\cdots u_m$ satisfies property $(\star)$. So does for $v=v_1\cdots v_{m'}$. Then we add $(u_1, \cdots, u_m, v_1, \cdots, v_{m'}, w)$ to the set $[(\hat u, \hat v, \hat w)]$.
We denote by   $\hat{\mathcal{U}}$ the union of $\{A_0\cup \cup_{\alpha\in C} \alpha\}$ and all the modified classes $[(\hat u, \hat v, \hat w)]$, and call it  a modified partition of $A$.
In a summary, elements $[(u, v, w)]$ in $\hat{\mathcal{U}}$ satisfy either of the following.
\begin{enumerate}
  \item[(a)]  $[(u, v, w)]=A_0\cup \cup_{\alpha\in C} \alpha$; every element in this class is a triple $(u', v', w')$ and  $N_{u', v'}^{w'}=0$.
  \item[(b)] Elements in $[(u, v, w)]$ are $(m+m'+1)$-tuples  $(u_1, \cdots, u_m, v_1, \cdots, v_{m'}, w')$, for most of which $m=m'=1$;
  $N_{u_1, \cdots, u_m, v_1,\cdots, v_{m'}}^{w'}=N_{u, v}^w$.
\end{enumerate}
Now we rewrite both $(u', v', w')$ and $(u_1, \cdots, u_m, v_1, \cdots, v_{m'})$ uniformly in the form $(u_1', \cdots, u_{m+1}')$, and restatement Conjecture \ref{mainconj00} as follows.
\begin{conjecture}\label{mainconj33}
   For any  $[({u, v},{w})]\in \hat{\mathcal{U}}$,  there exists $(u_1',\cdots, u_{m+1}',{w'})\in [({u, v},{w})]$ such that
     $\tilde u_1\cdot X^{u'_1}\cap\cdots\cap\tilde u_{m+1} \cdot  X^{u'_{m+1}}\cap X_{w'}$ is a  transversal intersection   for some
    $\tilde u_1,\cdots, \tilde u_{m+1}\in S_n$.
\end{conjecture}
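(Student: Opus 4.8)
The plan is to reduce the transversality assertion entirely to the combinatorial criterion furnished by Theorem \ref{thmTransInterviatoric}. Fix a class $[(u,v,w)]\in\hat{\mathcal{U}}$. For a class of type (b) and a chosen representative $(u_1',\dots,u_{m+1}',w')$, it suffices to exhibit Weyl group elements $\tilde u_1,\dots,\tilde u_{m+1}\in S_n$ for which the set
$$\mathcal{S}=\bigcap_{i=1}^{m+1}\Delta(\tilde u_i,u_i')\cap\Delta(w_0,\pi(w_0w'))$$
is contained in $V^X$; for a class of type (a) one instead wants to arrange $\mathcal{S}=\emptyset$, which is the degenerate case already built into Theorem \ref{thmTransInterviatoric} and forces $N=0$. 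Granting the former, Theorem \ref{thmTransInterviatoric} immediately yields transversality of $\tilde u_1X^{u_1'}\cap\cdots\cap\tilde u_{m+1}X^{u_{m+1}'}\cap X_{w'}$ together with $N^{w'}_{u_1',\dots,u_{m+1}'}=\sharp\mathcal{S}$. Thus the problem is transported, via the toric degeneration of Theorem \ref{thmToricDeg}, into the combinatorics of faces of the Gelfand-Cetlin polytope $\Delta$.

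First I would analyze a single factor $\Delta(\tilde u_i,u_i')$. By Proposition \ref{propWSchubertdivisor} the opposite Schubert divisor components of $X^{u_i'}$ are recorded by a collection of positive paths, and translating by $\tilde u_i$ permutes their horizontal-step sets; each $\Delta(\tilde u_i,u_i')$ is then an explicit intersection of unions of facets $F_e$ indexed by the effective edges lying on the translated paths. The dimension bookkeeping is automatic: the hypothesis $\ell(w')=\sum_i\ell(u_i')$ forces the expected codimension of $\mathcal{S}$ in $\Delta$ to equal $\dim_{\mathbb{R}}\Delta$, so $\mathcal{S}$ has expected dimension zero. The real content of the conjecture is therefore not a dimension count but the claim that the translations can be tuned so that $\mathcal{S}$ is \emph{genuinely} zero-dimensional and, moreover, consists only of regular vertices.

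The key step is to choose the $\tilde u_i$ so that the unions of facets cut each other down cleanly. My approach would be to track, edge by edge, which facets a prospective vertex $z$ of $\Delta$ lies on: $z$ survives in $\mathcal{S}$ exactly when, for every factor, it lies on at least one admissible effective edge, and replacing $\tilde u_i$ shifts the admissible edge-set and hence shifts which vertices survive. By Corollary \ref{coreqnvertices} each surviving vertex maps to a coordinate point of the ambient product of projective spaces, so the goal is to tune the shifts so that the surviving coordinate points all lie in $X$, i.e.\ in $V^X$. For $F\ell_n$ this means avoiding the non-regular vertices characterized earlier (those on which some quadruple of equalities $\lambda_j^{(i)}=\lambda_{j+1}^{(i+1)}=\lambda_{j}^{(i+1)}=\lambda_{j+1}^{(i+2)}$ holds). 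In concrete low-rank cases this can be carried out by hand, guided by the known Littlewood-Richardson numbers, which is exactly the verification performed for $Gr(2,n)$ and $F\ell_4$ in Section 4.

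The hard part will be the general combinatorial control, and I do not expect a uniform argument. Two obstructions must be excluded simultaneously: $\mathcal{S}$ might acquire a positive-dimensional component (a higher face of $\Delta$ surviving all the facet-unions), in which case the central fiber of the flat family degenerates with excess dimension and the equality $\sharp Y=\sharp\mathcal{S}$ in Theorem \ref{thmTransInterviatoric} fails; or $\mathcal{S}$ might contain a vertex in $V\setminus V^X$, whose Gelfand-Cetlin fiber is positive-dimensional and which therefore corresponds to no honest point of $X$, again breaking the reducedness input to Theorem \ref{thmTransInterviatoric}. Showing that some representative together with some choice of translations $\tilde u_i$ always evades both phenomena is precisely where the conjecture is open; the realistic plan is to prove it class-by-class for structured families---Chevalley products via Lemma \ref{lemverticesGr}, and two-step or small flag varieties---where the surviving vertices admit an explicit parametrization matching the expected structure constant.
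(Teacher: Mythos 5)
Your proposal takes essentially the same approach as the paper: since the statement is a conjecture, the paper offers no general proof, only the observation that type (a) classes hold trivially together with the case-by-case verifications of Section 4 (Chevalley-type constants for $Gr(m,n)$, arbitrary constants for $Gr(2,n)$ and $F\ell_4$, and the $Gr(3,6)$ example), and your reduction via Theorem \ref{thmTransInterviatoric} to tuning the translations $\tilde u_i$ so that $\mathcal{S}$ consists only of regular vertices, followed by class-by-class verification, is exactly that strategy, including your correct identification of the two obstructions (positive-dimensional components of $\mathcal{S}$ and vertices in $V\setminus V^X$) as the open part. The only small divergence is in type (a): the paper disposes of it directly, noting that such a class contains $(u',v',w')\in A_0$ with $u'\not\leq w'$ or $v'\not\leq w'$, so $X^{u'}\cap X_{w'}$ or $X^{v'}\cap X_{w'}$ is already empty and the intersection is vacuously transversal, with no need to route through the polytope and arrange $\mathcal{S}=\emptyset$ as you propose.
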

\noindent We notice that the above conjecture holds for  case (a). In this case, $[(u, v, w)]$ contains an element $(u', v', w')\in A_0$, for which $u'\not\leq w'$ or $v'\not\leq w'$ holds; consequently either of the intersections  $X^{u'}\cap X_{w'}$ and $X^{v'}\cap X_{w'}$ is an emptyset already.

\subsection{Special Schubert structure constants for $Gr(k, n)$}
The partitions  $1^r:=(1, 1, \cdots, 1, 0, \cdots, 0)$ and  $r:=(r, 0,\cdots, 0)$ for $Gr(m, n)$ are special. Geometrically,
the Schubert class  $\sigma^{r}=P.D.[X^{r}]$ (resp. $\sigma^{1^r}=P.D.[X^{1^r}]$)   is the $r$-th Chern class of the tautological quotient bundle (resp. dual subbundle) over the $Gr(m, n)$.
Recall that a partition $\mu$ corresponds to a positive path $\pi_\mu$ and hence a Grassmannian permuation  $w_\mu \in W^P$.
\begin{prop}\label{propsp1Grk}
   For any partitions $\mu, \eta\in \mathcal{P}_{m, n}$ satisfying $\mu\leq \eta$ and  $|\eta|=|\mu|+1$,
   the intersection    $w_\mu X^1\bigcap X^\mu \bigcap X_\eta$ is transversal and  consists of a point.
\end{prop}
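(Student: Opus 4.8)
The plan is to verify the hypotheses of Theorem \ref{thmTransInterviatoric} in the case $m=1$, with $v_1=1$, $u_1=w_\mu$, $v_2=\mu$, $u_2=\mathrm{id}$ and $w=\eta$; the length condition $\ell(\eta)=|\eta|=|\mu|+1=\ell(\mu)+\ell(1)$ holds by hypothesis. First I would identify the three degeneration loci making up $\mathcal{S}=\Delta(w_\mu,1)\cap\Delta(\mathrm{id},\mu)\cap\Delta(w_0,\pi(w_0\eta))$. Since the Chevalley divisor is $X^1=D_{p_{[1,2,\dots,m]}}$, Proposition \ref{propWSchubertdivisor} gives $w_\mu X^1=D_{p_{I_\mu}}$, where $I_\mu=w_\mu(\{1,\dots,m\})$ is exactly the index set of the positive path $\pi_\mu$; hence by Theorem \ref{thmToricDeg} the locus $\Delta(w_\mu,1)$ is the support of $X_0\cap\{p_{I_\mu}=0\}$. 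By Proposition \ref{propTorDegSchuGrass}, $X^\mu$ degenerates to the face $F_\mu$ (boxes below $\pi_\mu$ set to $b$) and $X_\eta$ degenerates to $F_\eta^\vee$ (boxes above $\pi_\eta$ set to $a$). So the task reduces to computing $\mathcal{S}=\big(X_0\cap\{p_{I_\mu}=0\}\big)\cap F_\mu\cap F_\eta^\vee$ and showing it is a single vertex of $\Delta$.

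Next I would describe $F_\mu\cap F_\eta^\vee$ purely combinatorially. Because $\mu\leq\eta$ the path $\pi_\mu$ runs weakly below $\pi_\eta$, and because $|\eta|=|\mu|+1$ the two paths bound exactly one box $\square_0$, the box of the skew shape $\eta/\mu$, which lies above $\pi_\mu$ and below $\pi_\eta$. On $F_\mu\cap F_\eta^\vee$ every box splits into three types: those below $\pi_\mu$ (forced to $b$), those above $\pi_\eta$ (forced to $a$), and the single box $\square_0$, whose value is constrained by the Gelfand-Cetlin inequalities only to the interval $b\leq\square_0\leq a$. Hence $F_\mu\cap F_\eta^\vee$ is the edge $[V_\mu,V_\eta]$ of $\Delta$ joining the vertex $V_\mu$ (at $\square_0=a$, i.e. all boxes above $\pi_\mu$ equal $a$) to the vertex $V_\eta$ (at $\square_0=b$), where $V_\mu,V_\eta$ are the coordinate points of $\pi_\mu,\pi_\eta$ under Lemma \ref{lemverticesGr}.

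It then remains to intersect this edge with $X_0\cap\{p_{I_\mu}=0\}$. The edge $[V_\mu,V_\eta]$ is a toric $\mathbb{P}^1$ in $X_0$ whose two torus-fixed points are the coordinate points $V_\mu$ (where $p_{I_\mu}\neq0$) and $V_\eta$ (where $p_{I_\eta}\neq0$, and in particular $p_{I_\mu}=0$). Restricted to this $\mathbb{P}^1$, the section $p_{I_\mu}$ is nonzero at $V_\mu$ and hence not identically zero; by torus-equivariance its zero locus consists only of torus-fixed points, so it vanishes exactly at $V_\eta$. Therefore $\{p_{I_\mu}=0\}\cap[V_\mu,V_\eta]=\{V_\eta\}$, giving $\mathcal{S}=\{V_\eta\}$. (Equivalently, via Proposition \ref{proptoricdivisor}, $V_\mu$ lies on no facet $F_e$ with $e\subset\pi_\mu$, while on the open edge the equation $a_e=b_e$ forces $a=b$ for edges away from $\square_0$ and $\square_0=b$ for the two edges of $\pi_\mu$ adjacent to $\square_0$, again isolating $V_\eta$.) Since $V_\eta\in V=V^X$ by Lemma \ref{lemverticesGr}, Theorem \ref{thmTransInterviatoric} applies and yields that $w_\mu X^1\cap X^\mu\cap X_\eta$ is transversal with $N_{1,\mu}^{\eta}=\sharp\mathcal{S}=1$.

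I expect the only real work to be the combinatorial identification in the middle paragraph, namely confirming that the constraints of $F_\mu$ and $F_\eta^\vee$ together leave precisely the one free coordinate $\square_0$ ranging over $[b,a]$, so that the intersection is genuinely a one-dimensional edge with the two named vertices as endpoints; this is where the hypotheses $\mu\leq\eta$ and $|\eta|=|\mu|+1$ enter essentially, and it relies on $a>b$ for $Gr(m,n)$. Once the face is correctly matched with the toric $\mathbb{P}^1$ and the Plücker coordinate $p_{I_\mu}$ is recognized as the coordinate nonvanishing at $V_\mu$, the vanishing-only-at-$V_\eta$ conclusion is immediate and the transversality and the count follow formally from the degeneration results already established.
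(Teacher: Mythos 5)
Your proposal is correct and follows essentially the same route as the paper: reduce to Theorem \ref{thmTransInterviatoric}, identify $\Delta(\mathrm{id},w_\mu)$ and $\Delta(w_0,\pi(w_0 w_\eta))$ with $F_\mu$ and $F_\eta^\vee$ via Proposition \ref{propTorDegSchuGrass}, recognize $F_\mu\cap F_\eta^\vee$ as the one-dimensional face with the single free box of $\eta/\mu$, and cut it down to the vertex $V_\eta$ by the facets $F_e$, $e\subset\pi_\mu$. Your parenthetical facet-equation argument is precisely the paper's argument (a $b$-valued box meeting either an $a$-valued box, giving the empty set, or the free box, forcing it to $b$); your main torus-equivariance argument on the toric $\mathbb{P}^1$ is a harmless stylistic variant of the same step.
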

 \begin{proof}
  The argument  can be read off immediately from Figure \ref{figureGrassInt}.
    \begin{figure}[h]
  \caption{Triple intersections for $Gr(5, 8)$}\label{figureGrassInt}
  \bigskip
     \includegraphics[scale=0.85]{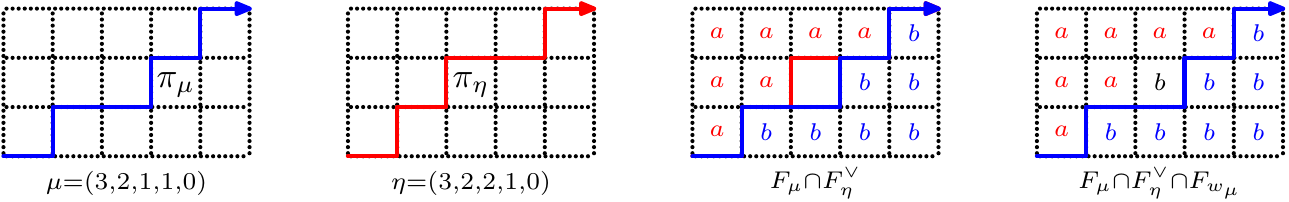}
  \end{figure}
  Indeed, the face $\Delta({\rm id}, w_\mu)$ (resp. $\Delta(w_0, \pi(w_0w_{\eta})$) of $\Delta$ corresponds to the toric subvariety to which $X^{w_\mu}$ (resp. $w_0X^{\pi(w_0w_{\eta})}=X_\eta$) degenerates. Thus it is given by the face $F_\mu$ (resp. $F^\vee_\eta$) by Proposition \ref{propTorDegSchuGrass}.   The intersection   $F_{\eta}^\vee\cap F_\mu$ is a one-dimensional face  with a parameter in the un-valued box bounded by the positive paths $\pi_\mu$ and $\pi_\eta$ (see the third diagram in Figure \ref{figureGrassInt} for an illustration). All the boxes above $\pi_\eta$ (resp. below $\pi_\mu$) take value $a=\lambda^{(n)}_m$ (resp. $b=\lambda^{(n)}_n$). Notice that $w_\mu X^{s_m}=w_\mu X^1$ and $\Delta(w_\mu, s_m)=\bigcup_{e\subset \pi_\mu}  {F_e}$. Thus   any effective edge $e$ on $\pi_\mu$ is the  common edge of a  $b$-valued box with either an $a$-valued box or   the unvalued box. In the former case,  the intersection $F_{\eta}^\vee\cap F_\mu\cap F_e$ is empty;  in the latter case, the intersection
     results in a same vertex of the latter diagram  by evaluating $b$ on the unvalued box. Hence, the set    $\Delta(w_\mu, s_m)\bigcap \Delta({\rm id}, w_\mu)\bigcap \Delta(w_0, \pi(w_0w_\eta))$ consists of a point in $V$, and therefore
       the statement follows by Theorem \ref{thmTransInterviatoric}.
      \end{proof}

 \begin{cor}[Pieri-Chevalley formula for $Gr(m, n)$]
   For any partition $\mu \in \mathcal{P}_{m, n}$, $$\sigma^{1}\cup \sigma^\mu=\sum_{|\eta|=|\mu|+1,\,\, \eta\geq \mu } \sigma^\eta.$$
 \end{cor}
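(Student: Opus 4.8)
The plan is to read the formula off directly from Proposition \ref{propsp1Grk}, separating the expansion of $\sigma^1\cup\sigma^\mu$ into three constraints: a degree constraint, a support constraint, and a coefficient computation. First I would record that $N_{1,\mu}^\eta$ can be nonzero only when $\ell(w_\eta)=\ell(s_m)+\ell(w_\mu)$, that is $|\eta|=|\mu|+1$, since cup product preserves cohomological degree and $\ell(w_\nu)=|\nu|$ for a Grassmannian permutation; this already restricts $\sigma^1\cup\sigma^\mu$ to a sum of $\sigma^\eta$ with $|\eta|=|\mu|+1$. Next I would invoke the standard Bruhat-order vanishing $N_{u,v}^w=0$ unless $u\leq w$ and $v\leq w$ (the fact already used in Section \ref{covering}). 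Taking $u=s_m$, $v=w_\mu$, $w=w_\eta$ forces $w_\mu\leq w_\eta$, which under the order isomorphism $(\{\pi_I\}_I,\leq)\cong(\mathcal{P}_{m,n},\leq)$ of Section \ref{subsecDefEqnGrassman} is exactly the containment $\mu\leq\eta$. Hence only partitions $\eta\geq\mu$ can contribute.

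For each such $\eta$ (so $\mu\leq\eta$ and $|\eta|=|\mu|+1$), Proposition \ref{propsp1Grk} asserts that $w_\mu X^1\cap X^\mu\cap X_\eta$ is transversal and consists of a single point. Since $X^1$, $X^\mu$, $X_\eta$ have codimensions $1$, $|\mu|$, and complementary dimension $|\mu|+1$, this intersection is zero-dimensional of the expected dimension; being transversal, its cardinality computes the triple intersection number. As recalled in the introduction, a transversal intersection $gX^u\cap g'X^v\cap X_w$ in complementary dimensions has cardinality exactly $N_{u,v}^w$, and this applies to the specific Weyl translate by $w_\mu$ because transversality, not genericity, is what the count requires. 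Therefore $N_{1,\mu}^\eta=1$ for every admissible $\eta$, and assembling the three steps yields $\sigma^1\cup\sigma^\mu=\sum_{|\eta|=|\mu|+1,\ \eta\geq\mu}\sigma^\eta$.

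The genuine content is entirely contained in Proposition \ref{propsp1Grk}; granting it, the corollary is bookkeeping. The one place I would slow down is the passage in the coefficient step from ``a transversal single point for a fixed Weyl translate'' to ``equals the structure constant'': this is precisely the mechanism of Theorem \ref{thmTransInterviatoric}, where the hypothesis that the polytope intersection $\mathcal{S}$ lands in $V^X$---here a single regular vertex---guarantees both the transversality of the corresponding $W$-translated Schubert varieties and the enumerative identity $N=\sharp\mathcal{S}$. Thus I expect no real obstacle in the corollary itself; the substantive work was discharged upstream in verifying that the toric-polytope intersection reduces to a single vertex of $V^X$.
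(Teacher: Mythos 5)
Your proposal is correct and follows essentially the same route as the paper: the paper likewise restricts the expansion to $\eta\geq\mu$ with $|\eta|=|\mu|+1$ (phrased via nonemptiness of the Richardson variety $X^\mu\cap X_\eta$, which is equivalent to your Bruhat-order vanishing) and then cites Proposition \ref{propsp1Grk} for the coefficient. Your extra care about why a transversal point for a \emph{specific} Weyl translate computes $N_{1,\mu}^\eta$ is well placed, and it is indeed discharged by Theorem \ref{thmTransInterviatoric}, exactly as you say.
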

 \begin{proof}
 Notice that $\sigma^\eta$ occurs in $\sigma^1\cup \sigma^\mu$ only if $\eta\geq \mu$ and $|\eta|=|\mu|+1$, for which    the Richardson variety $X^\mu\cap X_\eta$ is nonempty. We are done by Proposition \ref{propsp1Grk}.
 \end{proof}

\begin{prop}\label{propspeciala0Gr}
Let $r, q\in \mathcal{P}_{m, n}$ be special partitions  with    $r+q\leq n-m$. There exists $u\in S_n$ such that the intersection   $uX^r\bigcap X^q\bigcap X_{r+q}$
    is transversal and consists of  a point. In particular $N_{r, q}^{r+q}=1$.
\end{prop}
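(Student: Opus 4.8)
The plan is to apply Theorem \ref{thmTransInterviatoric} with $m=1$, taking as translating elements $u_1=w_q$ for the factor $X^r$ and $u_2=\mathrm{id}$ for the factor $X^q$, where $w_q\in W^P$ is the Grassmannian permutation attached to the single-row partition $q$. Note that the case $r=1$ is exactly Proposition \ref{propsp1Grk} (with $\mu=q$ and $\eta=q+1$), so the present statement is the natural generalization to arbitrary $r$, and I would try to follow the same geometry: translating the special class $X^r$ by the Grassmannian permutation of the \emph{untranslated} partition $q$. Since $\ell(w_{r+q})=r+q=\ell(w_r)+\ell(w_q)$, it then suffices to show that
$$\mathcal{S}:=\Delta(w_q,w_r)\cap\Delta(\mathrm{id},w_q)\cap\Delta(w_0,\pi(w_0w_{r+q}))$$
consists of a single vertex of $\Delta$; because $X=Gr(m,n)$, Lemma \ref{lemverticesGr} gives $V^X=V$, so such a vertex automatically lies in $V^X$.

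First I would pin down the two untranslated faces. By Proposition \ref{propTorDegSchuGrass}, $X^q$ degenerates to $X_{F_q}$ and $X_{r+q}$ to $X_{F^\vee_{r+q}}$, so $\Delta(\mathrm{id},w_q)=F_q$ and $\Delta(w_0,\pi(w_0w_{r+q}))=F^\vee_{r+q}$. Since $q\le r+q$ with $|(r+q)|-|q|=r$, the last part of Proposition \ref{propTorDegSchuGrass} shows $F_q\cap F^\vee_{r+q}$ is a reduced, irreducible toric face of dimension $r$. For single-row partitions the bounding paths $\pi_q$ and $\pi_{r+q}$ differ only in the rightmost column, so the gap between them is a vertical chain of $r$ boxes $c_1,\dots,c_r$; the Gelfand--Cetlin inequalities force their values into a chain $b\le x_1\le\cdots\le x_r\le a$. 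Hence $F_q\cap F^\vee_{r+q}$ is an $r$-simplex whose $r+1$ vertices are $z_q,z_{q+1},\dots,z_{r+q}$, where $z_{q+j}$ sets the lowest $j$ of the gap boxes equal to $b$ and the remaining $r-j$ equal to $a$; in particular $z_{r+q}$ is the vertex with all $r$ gap boxes equal to $b$.

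The heart of the argument is to show that $\Delta(w_q,w_r)$ meets this simplex only in the top vertex $z_{r+q}$, matching the Chevalley case where the single unvalued box was set to $b$. Writing $X^r=X\cap\{p_I=0\mid \pi_I\not\ge\pi_{r}\}$, so that $\Delta(w_q,w_r)=\bigcap_{\pi_I\not\ge\pi_{r}}\bigcup_{e\subset\pi_{w_q(I)}}F_e$, I would analyze the translated paths $\pi_{w_q(I)}$ and show that, after restricting to the chain $c_1,\dots,c_r$, the effective edges they contribute are exactly the bottom edges $e_1,\dots,e_r$ of the gap boxes ($e_i$ separating $c_i$ from the box directly below it). Setting $x_0:=b$, each facet $F_{e_i}$ imposes $x_i=x_{i-1}$ on the simplex, so their common refinement inside $\Delta(w_q,w_r)$ pins every $x_i$ to $b$ and leaves only $z_{r+q}$. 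As $z_{r+q}\in V=V^X$, Theorem \ref{thmTransInterviatoric} then yields that $w_qX^r\cap X^q\cap X_{r+q}$ is transversal with $N_{r,q}^{r+q}=\sharp\mathcal{S}=1$.

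The main obstacle is precisely this last verification. Unlike the Chevalley case $r=1$, where $X^1=X^{s_m}$ is a single divisor and the gap is one box, here $X^r$ has codimension $r$ and is cut out by several Plücker coordinates, so $\Delta(w_q,w_r)$ is an intersection of several unions of facets rather than a single such union. One must check that, collectively, the translated paths $\{\pi_{w_q(I)}\}_{\pi_I\not\ge\pi_r}$ contribute \emph{exactly} the $r$ bottom edges $e_1,\dots,e_r$ within the gap: not fewer (otherwise some $x_i$ remains free and $\mathcal{S}$ is positive-dimensional), and no spurious facet forcing some $x_i=a$ (which would empty the intersection). I expect this to reduce to a direct but careful reading of the ladder diagram, entirely parallel to the picture-based argument in Proposition \ref{propsp1Grk}, organized around a single figure for $Gr(m,n)$ with the chain $c_1,\dots,c_r$ drawn between $\pi_q$ and $\pi_{r+q}$.
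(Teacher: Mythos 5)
Your global strategy---degenerate everything, reduce to showing that $\mathcal{S}$ is a single vertex of the $r$-simplex $F_q\cap F^\vee_{r+q}$, then invoke Lemma \ref{lemverticesGr} and Theorem \ref{thmTransInterviatoric}---is exactly the paper's. But your choice of translating element $u_1=w_q$ is wrong for $r\geq 2$, and the step you yourself flagged as the ``main obstacle'' is precisely where it fails; indeed with this choice the conclusion is false, not merely unproven. The defining paths of $X^r$ are the $\pi_I$ with $i_m\leq m+r-1$, the crucial ones being $I_i=[1,\ldots,m-1,m+i]$ for $0\leq i\leq r-1$. Since $w_q\in W^P$, it sends $m\mapsto m+q$ but sends $m+i\mapsto m+i-1$ for $1\leq i\leq q$; so it sends $I_0$ to $\pi_q$ (whose facets cut out $x_1=b$ on the simplex), but it sends $I_1,\ldots,I_{r-1}$ \emph{downward}, to paths lying on or below $\pi_q$, whose facets restrict trivially to the simplex (conditions of the form $b=b$, or $a=b$ which are empty). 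Consequently $\Delta(w_q,w_r)$ never produces the needed facets $x_{i+1}=x_i$ for $i\geq 1$, and $\mathcal{S}$ is a face of positive dimension. Concretely, take $Gr(2,5)$ with $r=2$, $q=1$: then $w_q=s_2$, and $X^{(2,0)}=X\cap\{p_{12}=p_{13}=p_{23}=0\}$ has its index set permuted by $s_2$, so $w_qX^{(2,0)}=X^{(2,0)}\subset X^{(1,0)}$, and the triple intersection is the Richardson variety $X^{(2,0)}\cap X_{(3,0)}\cong\mathbb{P}^1$, which is one-dimensional rather than a transversal (zero-dimensional) intersection.

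The paper instead translates by the permutation $u$ defined by $u(j)=j$ for $1\leq j\leq m-1$ and $u(j)=j+q$ for $m\leq j\leq m+r-1$: one must shift the whole block $[m,m+r-1]$ up by $q$, not only the single value $m$. This $u$ agrees with $w_q$ on $[1,m]$---which is why your proposal is correct when $r=1$, since the divisor $X^1$ is cut out by the single coordinate $p_{[1,\ldots,m]}$ and only $u([1,\ldots,m])$ matters there---but for $r\geq 2$ it sends $I_i$ to $\pi_{1,\ldots,m-1,m+q+i}=\pi_{q+i}$, exactly the single-row paths bounding the gap boxes from below. Their facets impose $x_1=b$, $x_2=x_1$, \ldots, $x_r=x_{r-1}$ successively, cutting the simplex down to the vertex $F_{r+q}\cap F^\vee_{r+q}$, and one then checks that this vertex lies in $\bigcup_{e\subset\pi_{u(I)}}F_e$ for every remaining defining path $I$ of $X^r$. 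With that one replacement (plus this final check, which the paper carries out), your outline becomes the paper's proof.
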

\begin{proof}
  Take $u\in S_n$ such that $u(j)=\begin{cases}
     j,&\mbox{if } 1\leq j\leq m-1;\\
     j+q,&\mbox{if } m\leq j\leq m+r-1.
  \end{cases}$
Denote by $w_r$ the Grassmannian permutation of the special partition $r=(r, 0,\cdots, 0)$ in $\mathcal{P}_{m, n}$.
  Notices that $\Delta({\rm id}, w_q)\bigcap\Delta(w_0, \pi(w_0w_{r+q}))=F_q\cap F^\vee_{r+q}$ is the face of $\Delta$ that takes value $a$ on every box above the positive path $\pi_{1,2,\cdots, m-1, m+r+q}$ and takes value $b$ on every box below the positive path $\pi_{1,2,\cdots, m-1, m+q}$.
Clearly,   the intersection of
  $F_q\cap F^\vee_{r+q}$ with 
   $$\bigcup_{e\subset \pi_{1,\cdots, m-1, m+q}}F_e=\{\lambda_{m}^{(m+q)}=\lambda_m^{(m+q+1)}\}\bigcup\bigcup_{i=1}^q\{\lambda_{m-1}^{(m-1+i)}=\lambda_{m}^{(m+i)}\},$$
gives the face $F_{q+1}\cap F^\vee_{q+r}$, the union in which consists of facets given by $\pi_{1,\cdots, m-1,j+q}$.
By induction,  the intersection of
  $F_q\cap F^\vee_{r+q}$ with the intersection of union of facets given by $\{\pi_{1,\cdots, m-1, j+q}\mid m\leq j\leq m+r-1\}$
  gives the vertex $F_{r+q}\cap F^\vee_{r+q}$. It is easy to see that this point belongs to the union of facets given by any one of the remaining positive path $\pi_{u(I)}$ satisfying $\pi_I\not \neq \pi_{1,\cdots, m-1,m+q}$.
  Therefore,  $\Delta(u, w_r)\bigcap\Delta({\rm id}, w_q)\bigcap\Delta(w_0, \pi(w_0w_{r+q}))=F_{r+q}\cap F^\vee_{r+q}$ consists of a vertex of $\Delta$. Hence, the statement follows from Lemma \ref{lemverticesGr} and Theorem \ref{thmTransInterviatoric}.
\end{proof}
\begin{remark}\label{rmkGr1n}
  The  case   $m=1$ tells that   Conjecture \ref{mainconj00} holds for $Gr(1, n)=\mathbb{P}^{n-1}$.
\end{remark}

\subsection{Transversal intersections in $Gr(2, n)$}
Recall that $C$ denotes the cyclic permutation $(2, 3,\cdots, n, 1)$. We will see in Theorem \ref{thmGr2n} that Conjecture \ref{mainconj00} holds for $X=Gr(2, n)$.
\begin{prop}\label{propPierGr2n}
  Let $\mu, \eta\in \mathcal{P}_{2, n}$. If  $\eta=\mu+(1,1)$, then
   $X^{(1,1)}\bigcap C\cdot  X^\mu\bigcap X_\eta$ is a transversal intersection  and consists of a point. If $\eta-\mu=(2,0)$ or $(0, 2)$, then
     $C^k \cdot X^{(1,1)}\bigcap X^\mu\bigcap X_\eta=\emptyset$ for some $k\in \mathbb{Z}$.
\end{prop}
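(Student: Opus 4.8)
The plan is to run everything through Theorem~\ref{thmTransInterviatoric}, using that for $X=Gr(2,n)$ we have $V^X=V$ by Lemma~\ref{lemverticesGr}, so it suffices to compute the relevant vertex set $\mathcal{S}$ and check it lies in $V$. Recall a vertex $z_\nu$ of $\Delta$ is the assignment giving value $a$ above and $b$ below the positive path $\pi_\nu$, so vertices are indexed by partitions $\nu\in\mathcal{P}_{2,n}$, equivalently by $2$-subsets $\mathrm{ind}(\nu)=\{\nu_2+1,\nu_1+2\}$. First I would record the dictionary to be used repeatedly: by Proposition~\ref{propTorDegSchuGrass}, $z_\nu\in\Delta(\mathrm{id},w_\mu)=F_\mu$ iff $\nu\ge\mu$ and $z_\nu\in\Delta(w_0,\pi(w_0w_\eta))=F_\eta^\vee$ iff $\nu\le\eta$; and, reading the $t=0$ fibre of Theorem~\ref{thmToricDeg} as coordinate points, $z_\nu$ lies in the degeneration $\Delta(C^k,w_v)$ of $C^k\cdot X^v$ iff $p_{C^k(I)}(z_\nu)=0$ for every defining path $\pi_I\not\ge\pi_v$ of $X^v$, that is, iff $C^{-k}(\nu)\ge v$.

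For the first statement I would intersect the three conditions cutting out $\mathcal{S}_1=\Delta(\mathrm{id},w_{(1,1)})\cap\Delta(C,w_\mu)\cap\Delta(w_0,\pi(w_0w_\eta))$. The first and third give the Richardson window $(1,1)\le\nu\le\eta$, on which the smallest index $\nu_2+1\ge 2$, so $C^{-1}$ acts without wrap-around and one computes directly $C^{-1}(\nu)=\nu-(1,1)$. Hence the middle condition $C^{-1}(\nu)\ge\mu$ becomes $\nu\ge\mu+(1,1)=\eta$, which together with $\nu\le\eta$ forces $\nu=\eta$. Thus $\mathcal{S}_1=\{z_\eta\}$ is a single vertex lying in $V=V^X$, and Theorem~\ref{thmTransInterviatoric} yields that $X^{(1,1)}\cap C\cdot X^\mu\cap X_\eta$ is transversal and a single reduced point, recovering $N_{(1,1),\mu}^{\mu+(1,1)}=1$.

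For the second statement I would use that $C^k\cdot X^{(1,1)}$ degenerates to the single codimension-two face $\Delta_{(k)}$ (Proposition~\ref{proptorforGr2n}), so $\mathcal{S}_2(k)=\Delta_{(k)}\cap F_\mu\cap F_\eta^\vee$ consists of the vertices $z_\nu$ with $\mu\le\nu\le\eta$ and $C^{-k}(\nu)\ge(1,1)$. Since the defining paths of $X^{(1,1)}$ are the $\pi_{\{1,j\}}$, the last condition simplifies to $(k+1\bmod n)\notin\mathrm{ind}(\nu)$. When $\eta-\mu=(2,0)$ (resp. $(0,2)$) the interval $[\mu,\eta]$ is the three-element chain obtained by growing only the first (resp. second) row, and every vertex in it shares the common index $\mu_2+1$ (resp. $\mu_1+2$). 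Choosing $k=\mu_2$ (resp. $k=\mu_1+1$), taken mod $n$, therefore places $k+1$ in $\mathrm{ind}(\nu)$ for all $\nu\in[\mu,\eta]$, so $\mathcal{S}_2(k)=\emptyset$ and Theorem~\ref{thmTransInterviatoric} gives $C^k\cdot X^{(1,1)}\cap X^\mu\cap X_\eta=\emptyset$ and $N=0$, matching the Pieri rule for $\sigma^{(1,1)}$.

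The genuinely delicate step is the dictionary in the first paragraph rather than either combinatorial deduction: one must justify that the set-theoretic degeneration of a $W$-translated Schubert variety is detected on vertices exactly by these coordinate-point conditions, i.e. that $\Delta(C^k,w_v)$ meets $V$ precisely in $\{z_\nu\mid C^{-k}(\nu)\ge v\}$, which requires checking that whenever two positive paths differ they differ along an \emph{effective} edge, and tracking the cyclic wrap-around of $C^{-k}$ near index $n$ (both harmless in the ranges above, since $\nu\ge(1,1)$ already excludes the index $1$). Once this is in place, both parts reduce to the elementary order-theoretic statements above, and I expect no further obstacle; the whole content is that the $C$-twist on $X^\mu$ shifts the relevant path by $(1,1)$, which is exactly the vertical strip in the classical Pieri rule.
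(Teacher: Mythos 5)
Your argument is sound in substance, but it is not the paper's argument, and it needs one additional (easy) step before Theorem \ref{thmTransInterviatoric} can legitimately be invoked. On the comparison: for the first statement the paper does not degenerate at all; it observes that $X^{(1,1)}\cap C\cdot X^{\mu}\cap X_{\eta}$ is cut out of $X$ by the Pl\"ucker hyperplanes $\{p_I=0 \mid \pi_I\neq\pi_\eta\}$ and that this locus is a single reduced coordinate point of $\mathbb{P}^{{n\choose 2}-1}$, whence transversality directly. For the second statement the paper does use the degeneration, but it tracks faces through the box values $a$, $b$ and the unvalued boxes, rather than through vertices. Your uniform treatment of both statements via the vertex dictionary ($z_\nu\in\Delta(C^k,w_v)$ iff $C^{-k}(\nu)\geq v$, etc.) is a genuine alternative: it replaces the paper's case-by-case face bookkeeping by two order-theoretic computations, at the cost of invoking the full strength of Theorem \ref{thmTransInterviatoric} even in the first case, where a direct argument is shorter. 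Incidentally, for $\eta-\mu=(0,2)$ you take $k=\mu_1+1$ whereas the paper takes $v=C^{2+\mu_1}$; under the convention fixed in Proposition \ref{proptorforGr2n} (namely $C^k\cdot X^{(1,1)}=X\cap\{p_{k+1,j}=0\}$), the common index of all vertices in the window $[\mu,\eta]$ is $\mu_1+2$, so it is your exponent that makes the argument close; the paper's stated exponent appears to be off by one.

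The step you must add: the hypothesis of Theorem \ref{thmTransInterviatoric} is that $\mathcal{S}$ itself---the intersection of unions of facets, a closed subset of $\Delta$---lies in $V^X$, whereas your computations only determine $\mathcal{S}\cap V$; writing ``$\mathcal{S}_1=\{z_\eta\}$'' and ``$\mathcal{S}_2(k)$ consists of the vertices $z_\nu$ with\dots'' silently identifies the two. The gap is filled by a standard convexity remark which you should state: each $\Delta(u,v)$ is a finite union of faces of $\Delta$ (distribute the intersection over the unions; any intersection of facets is a face), hence so is each $\mathcal{S}$; and every nonempty face of a polytope is the convex hull of the vertices of $\Delta$ lying on it. Consequently $\mathcal{S}\cap V=\emptyset$ forces $\mathcal{S}=\emptyset$ (your second case), and $\mathcal{S}\cap V=\{z_\eta\}$ forces every nonempty face occurring in $\mathcal{S}$ to equal $\{z_\eta\}$, i.e.\ $\mathcal{S}=\{z_\eta\}$ (your first case); in the second case you could alternatively note that $\Delta_{(k)}\cap F_\mu\cap F_\eta^\vee$ is an intersection of faces, hence itself a face, so emptiness follows from having no vertices. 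This, and not the dictionary you single out at the end (which is indeed justified by Lemma \ref{lemverticesGr}, Proposition \ref{proptoricdivisor} and Theorem \ref{thmToricDeg}), is the point where your write-up falls short of a complete proof; once it is inserted, your argument goes through.
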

\begin{proof}
   If $(\eta_1, \eta_2)=(\mu_1, \mu_2)+(1, 1)$, then we conclude that    $ X^{(1,1)}\bigcap C\cdot X^\mu\bigcap X_\eta=$ $\{p_I=0\mid \pi_I\neq \pi_\eta\}$, which is a reduced coordinate point of $\mathbb{P}^{{n\choose 2}-1}$ so that the first statement follows.
   Indeed, we notice that $X_\eta=X\cap\{p_I=0\mid \pi_I\not\leq \pi_\eta\}$,  $C X^\mu=X\cap\{p_{C(I)}=0\mid \pi_I\not\geq \pi_\mu\}$, and $X^{(1, 1)}=X\cap \{p_{1j}=0\mid 2\leq j\leq n\}$.
   Thus $p_I=0$ with $I=(i_1, i_2)$ does not occur in the triple intersection if and only if $\pi_I\leq \pi_\eta, i_1>1$ and $\pi_{(i_1-1, i_2-1)}\geq \pi_\mu$ all hold; that is, $(i_1, i_2)\leq (\eta_1, \eta_2), i_1>1, (i_1-1, i_2-1)\geq (\mu_1, \mu_2)=(\eta_1,\eta_2)- (1,1)$.  Hence, $I=\eta$ and we are done.

   Now we assume $(\eta_1, \eta_2)=(\mu_1, \mu_2)+(2, 0)$ and let $v=C^{\mu_2}$. Then $\mu_1+2\leq n-2$. Notice that  $v X^{(1, 1)}=X\cap \{p_{1+\mu_2,j}=0\mid 1\leq j\leq n, j\neq  1+\mu_2\}$, and that $F_\mu\cap F_\eta^\vee$ takes value $a$ (resp. $b$) on each box above (resp. below)
   the positive path $\pi_\eta$ (resp. $\pi_\mu$), and the only   unvalued boxes are labeled by  $\lambda_2^{(\mu_1+3)}$ and  $\lambda_2^{(\mu_1+4)}$.  Thus  for any effective edge $e$ on the positive path $\pi_\mu=\pi_{(1+\mu_2, 2+\mu_1)}$, the intersection of the facet $F_e$ with  $F_\mu\cap F_\eta^\vee$ is empty unless $e$ is the common edge of the boxes labelled by $\lambda_2^{(\mu_1+2)}$ and $\lambda_2^{(\mu_1+3)}$, which forces the $\lambda_2^{(\mu_1+3)}$-box to take value $b$.
   The further intersection with the facets determined by  $\pi_{(1+\mu_2, 3+\mu_1)}$ will make the $\lambda_2^{(\mu_1+4)}$-box to take value $b$. This will result in an empty set after one more intersection with the facets  determined by  $\pi_{(1+\mu_2, 4+\mu_1)}$.
   Therefore we have  $\Delta(v, w_{(1,1)})\cap \Delta(\mbox{id}, w_{\mu})\cap \Delta(w_0, \pi(w_0w_{\eta}))=\emptyset\subset \Delta$.
   If $(\eta_1, \eta_2)=(\mu_1, \mu_2)+(0, 2)$, then  we can also conclude the empty intersection    by similar arguments with respect to $v=C^{2+\mu_1}$. 
    It follows from Theorem \ref{thmTransInterviatoric} that  $vX^{(1,1)}\bigcap X^\mu\bigcap X_\eta=\emptyset$ in these two cases.
\end{proof}

\begin{cor}[Pieri rule for $Gr(2, n)$]\label{corPierGr2n}
Let $\mu=(\mu_1,\mu_2)\in \mathcal{P}_{2, n}$. We have  $\sigma^{(1,1)}\cup \sigma^{(\mu_1,\mu_2)}=\sigma^{(\mu_1+1,\mu_2+1)}$ if $\mu_1<n-2$, or $0$ otherwise.
\end{cor}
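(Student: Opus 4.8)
The plan is to expand $\sigma^{(1,1)}\cup\sigma^{\mu}$ in the Schubert basis and read off each coefficient from Proposition \ref{propPierGr2n}. Writing $\sigma^{(1,1)}\cup\sigma^{\mu}=\sum_{\eta}N_{(1,1),\mu}^{\eta}\,\sigma^{\eta}$, the only partitions $\eta\in\mathcal{P}_{2,n}$ that can contribute are those with $|\eta|=|\mu|+2$ and $\eta\geq\mu$; the latter is forced because $N_{(1,1),\mu}^{\eta}\neq 0$ requires the Richardson variety $X^{\mu}\cap X_{\eta}$ to be nonempty, i.e.\ $\mu\leq\eta$. First I would enumerate these candidates: inside the $2\times(n-2)$ rectangle the only ways to add two boxes to $\mu=(\mu_1,\mu_2)$ while staying a partition are $\eta=(\mu_1+2,\mu_2)$, $\eta=(\mu_1+1,\mu_2+1)$, and $\eta=(\mu_1,\mu_2+2)$, subject respectively to the size constraints $\mu_1+2\leq n-2$, $\mu_1+1\leq n-2$, and $\mu_2+2\leq\mu_1$. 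Since $\mu$ has only two rows, these three increments exhaust every admissible $\eta$.

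Next I would reduce each surviving coefficient to Proposition \ref{propPierGr2n}, using Theorem \ref{thmTransInterviatoric} as the bridge from $W$-translated intersections to structure constants. For $\eta=(\mu_1+1,\mu_2+1)=\mu+(1,1)$, the first assertion of that proposition gives that $X^{(1,1)}\cap C\cdot X^{\mu}\cap X_{\eta}$ is a transversal intersection consisting of a single point; the associated combinatorial set $\mathcal{S}$ is then a single vertex, so $N_{(1,1),\mu}^{\eta}=\sharp\mathcal{S}=1$. For the two remaining candidates, with $\eta-\mu=(2,0)$ or $(0,2)$, the second assertion produces an element $C^{k}\in W$ for which $C^{k}\cdot X^{(1,1)}\cap X^{\mu}\cap X_{\eta}=\emptyset$; by Theorem \ref{thmTransInterviatoric} the corresponding $\mathcal{S}$ is empty, whence $N_{(1,1),\mu}^{\eta}=0$ in both cases.

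Assembling these, when $\mu_1<n-2$ the partition $(\mu_1+1,\mu_2+1)$ lies in $\mathcal{P}_{2,n}$ and is the unique term with nonzero coefficient, giving $\sigma^{(1,1)}\cup\sigma^{\mu}=\sigma^{(\mu_1+1,\mu_2+1)}$. The final step is the boundary case $\mu_1=n-2$: here both $(\mu_1+1,\mu_2+1)$ and $(\mu_1+2,\mu_2)$ leave the rectangle and so index no Schubert class, while the only possibly-valid candidate $(\mu_1,\mu_2+2)$ already has coefficient zero, so the product vanishes. I do not expect a genuine obstacle in this corollary, since all the geometric content is carried by Proposition \ref{propPierGr2n}; the one point to treat with care is confirming that the three increments above are exhaustive—so that no hidden term escapes the proposition's dichotomy—and that the size constraints reproduce precisely the stated case split at $\mu_1=n-2$.
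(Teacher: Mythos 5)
Your proposal is correct and follows essentially the same route as the paper: restrict the possible $\eta$ to the three increments $\eta-\mu\in\{(2,0),(1,1),(0,2)\}$ using $|\eta|=|\mu|+2$ and $\eta\geq\mu$, then read off each coefficient from Proposition \ref{propPierGr2n} (transversal one-point intersection for $(1,1)$, empty intersection for the other two). Your extra care with the boundary case $\mu_1=n-2$ and the explicit appeal to Theorem \ref{thmTransInterviatoric} just spell out details the paper leaves implicit.
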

\begin{proof}
  $\sigma^{\eta}$ occurs in   $\sigma^{(1,1)}\cup \sigma^{(\mu_1,\mu_2)}$ only if $n-2\geq \eta_2\geq \eta_1\geq 0$,  $\eta\geq \mu$, $\eta\geq (1, 1)$ and $|\eta|=|\mu|+2$. Therefore $\eta-\mu$ can only be given by
   $(2, 0), (1,1)$ or $(0, 2)$. Hence, the statement follows from Proposition \ref{propPierGr2n}.
   \end{proof}

\begin{thm}\label{thmGr2n}
  For any $\lambda, \mu, \eta\in \mathcal{P}_{2, n}$, there exist $\lambda', \mu', \eta'\in \mathcal{P}_{m, n}$ for some $m$, such that
  $N_{\lambda, \mu}^\eta=N_{\lambda', \mu'}^{\eta'}$  and   $uX^{\lambda'}\bigcap X^{\mu'}\bigcap X_{\eta'}$ is a transversal intersection in $Gr(m, n)$ for some $u\in S_n$.
\end{thm}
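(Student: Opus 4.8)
The plan is to exploit that all Schubert structure constants of $Gr(2,n)$ are $0$ or $1$, so that the target is modest: realize the value $1$ as a single reduced point cut out transversally, and the value $0$ as an empty intersection. First I would record this multiplicity-freeness. Since $N_{\lambda,\mu}^\eta=c_{\lambda\mu}^\eta$ is a Littlewood--Richardson number for two-row shapes, any Littlewood--Richardson skew tableau of shape $\eta/\lambda$ and content $\mu$ has its first row forced to be $1$'s and its second row forced to read $1\cdots 1\,2\cdots 2$; hence it is uniquely determined once it exists, giving $c_{\lambda\mu}^\eta\in\{0,1\}$. Thus it suffices, for each triple, to produce a $Gr(m,n)$-model whose face intersection $\mathcal S$ in Theorem \ref{thmTransInterviatoric} is either empty or a single regular vertex.

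Next I would reduce to a Pieri-type configuration. Using commutativity $N_{\lambda,\mu}^\eta=N_{\mu,\lambda}^\eta$ from \eqref{eqnid} together with the $(1,1)$-shift, which by Corollary \ref{corPierGr2n} and Proposition \ref{propPierGr2n} yields $N_{\lambda,\mu}^\eta=N_{\lambda+(1,1),\,\mu-(1,1)}^{\eta}$, I would transfer all the depth of $\mu$ into $\lambda$. After $\mu_2$ steps this reaches $N_{\lambda',\mu'}^{\eta}$ with $\mu'=(\mu_1-\mu_2,0)$ a one-row (special) partition and $\lambda'=(\lambda_1+\mu_2,\lambda_2+\mu_2)$; if $\lambda_1+\mu_2>n-2$ then $\sigma^{\lambda'}=0$, so $\sigma^{\lambda}\cup\sigma^{\mu}=0$, the constant vanishes, and an empty intersection does the job. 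This turns the problem into isolating the single term $\sigma^\eta$ of the Pieri product $\sigma^{\mu'}\cup\sigma^{\lambda'}$, whose coefficient is $1$ exactly when $\eta/\lambda'$ is a horizontal $(\mu_1-\mu_2)$-strip.

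Then I would realize this Pieri coefficient geometrically, as in the model cases already treated. By Proposition \ref{propTorDegSchuGrass} the faces $\Delta(\mathrm{id},w_{\mu'})=F_{\mu'}$ and $\Delta(w_0,\pi(w_0w_\eta))=F_\eta^\vee$ are the degenerations of $X^{\mu'}$ and $X_\eta$, and their intersection $F_{\mu'}\cap F_\eta^\vee$ is the face freezing every box above $\pi_\eta$ to the value $a$ and every box below $\pi_{\mu'}$ to the value $b$, leaving a strip of boxes free. I would then choose a Weyl translate $u$ of the explicit type appearing in Propositions \ref{propspeciala0Gr} and \ref{propPierGr2n} (a product of cyclic factors, or the permutation built from a prescribed increasing pattern) so that $\Delta(u,w_{\lambda'})=\bigcup_{e\subset\pi_{u(\lambda')}}F_e$ cuts this free strip down to exactly one regular vertex when the horizontal-strip condition holds, and misses it otherwise; Theorem \ref{thmTransInterviatoric} then delivers simultaneously the equality $N_{\lambda,\mu}^\eta=\sharp\mathcal S$ and the transversality. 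The subcases where one factor is $\sigma^{1}$, or where $\eta$ is a single row, are precisely Propositions \ref{propsp1Grk} and \ref{propspeciala0Gr}; the latter, taken with $m=1$, also covers the $\mathbb P^{n-1}$ situation of Remark \ref{rmkGr1n}. The freedom to change the value $m$ is exactly what lets me route an awkward triple into such an already-proven model, for instance through the transpose identity $N_{u,v}^w=N_{w_0uw_0,\,w_0vw_0}^{w_0ww_0}$ or the complement identity $N_{u,v}^w=N_{u,\,w_0w}^{w_0v}$ of \eqref{eqnid}.

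I expect the genuine difficulty to lie entirely in this last step. For a general two-row $\lambda'$ the Pieri product $\sigma^{\mu'}\cup\sigma^{\lambda'}$ carries many terms, and extracting one prescribed $\eta$ as a single regular vertex requires both the correct choice of $u$ and an explicit check that $\Delta(u,w_{\lambda'})\cap F_{\mu'}\cap F_\eta^\vee$ meets $V^{X}$ in exactly the expected number of points. This transversal face-counting is the common thread of Propositions \ref{propsp1Grk}, \ref{propspeciala0Gr} and \ref{propPierGr2n}, and the real work is to show it persists uniformly; a clean way to organize it is an induction on $|\mu|$ (or on $\eta_1$) that strips one box at a time, using the descent recurrence of Proposition \ref{propreduction} together with the symmetries of \eqref{eqnid} to descend to the base cases $\mu'=\sigma^1$ and the concatenation $\sigma^r\cup\sigma^q=\sigma^{r+q}$, where transversality is already known. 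By comparison, the vanishing cases — a failed strip condition, or $\lambda\not\le\eta$ in the Bruhat order placing the triple in $A_0$ — should correspond routinely to an empty $\mathcal S$, and hence to an empty transversal intersection.
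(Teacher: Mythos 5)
Your toolkit is the right one, but there is a genuine gap at exactly the step you yourself flag as ``the genuine difficulty'': you never carry out the realization of a general Pieri coefficient $N_{\lambda',\,(p,0)}^{\eta}$, with $\lambda'$ a genuine two-row partition, as a transversal face intersection in $Gr(2,n)$. No result in the paper covers that configuration: Proposition \ref{propsp1Grk} only treats the factor $\sigma^{1}$, Proposition \ref{propPierGr2n} only treats $\sigma^{(1,1)}$, and Proposition \ref{propspeciala0Gr} requires \emph{all three} partitions to be special. So choosing ``a Weyl translate $u$ of the explicit type appearing in Propositions \ref{propspeciala0Gr} and \ref{propPierGr2n}'' and asserting that $\Delta(u,w_{\lambda'})\cap F_{\mu'}\cap F_{\eta}^{\vee}$ is a single regular vertex (or empty) is an unproved claim, not a reduction to known cases; your proposed induction on $|\mu|$ is likewise only announced, not executed.

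The paper's proof avoids this configuration altogether, by a mechanism your sketch only gestures at. After using Corollary \ref{corPierGr2n} to strip $(1,1)$'s from \emph{both} $\lambda$ and $\mu$ (not just from $\mu$, as you do), one reaches $N_{(a,0),(b,0)}^{(c,d)}$ with $a=\lambda_1-\lambda_2$, $b=\mu_1-\mu_2$, $c=\eta_1-\lambda_2-\mu_2$, $d=\eta_2-\lambda_2-\mu_2$. Then Proposition \ref{propreduction} is applied through an explicit chain of identities which \emph{changes the Grassmannian}: one round converts the Grassmannian permutations for $Gr(2,n)$ into Grassmannian permutations for $Gr(3,n)$, turning the constant into $N_{(a-1,0,0),(b-1,0,0)}^{(c-1,d-1,0)}$, and after $d$ rounds one lands on $N_{(a-d,0,\cdots,0),(b-d,0,\cdots,0)}^{(c-d,0,\cdots,0)}$ for $Gr(d+2,n)$, where all three partitions are special and Proposition \ref{propspeciala0Gr} applies directly. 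Note that this descent recurrence, not the identities of \eqref{eqnid}, is what realizes ``the freedom to change $m$'' in the statement: the transposition $w\mapsto w_0ww_0$ you invoke only exchanges $Gr(m,n)$ with $Gr(n-m,n)$ and can never turn the two-row target $(c,d)$ into a one-row one. Your closing remark that one should descend via Proposition \ref{propreduction} to the base case $\sigma^{r}\cup\sigma^{q}=\sigma^{r+q}$ is indeed the paper's route, but in your write-up the chain of identities, the bookkeeping identifying the endpoint, and the verification of the length hypotheses of Proposition \ref{propreduction} at each step are all missing; without them the theorem is not proved.
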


\begin{proof}
   By Corollary \ref{corPierGr2n}, we have $N_{\lambda, \mu}^\eta=N_{(a, 0), (b,0)}^{(c, d)}$ where $a=\lambda_1-\lambda_2, b=\mu_1-\mu_2$, $c=\eta_1-\lambda_2-\mu_2$ and $d=\eta_2-\lambda_2-\mu_2$.
   Clearly, $N_{(a, 0), (b,0)}^{(c, d)}$ is nonzero only if $n-2\geq c\geq d\geq 0$, $c+d=a+b$ and $c\geq \max\{a, b\}$, which implies $d\leq \min\{a, b\}$.
   Notice that $u=s_{a+1}s_a\cdots  s_2$, $v=s_{b+1}s_b\cdots s_2$ and $w=s_{d}\cdots s_1s_{c+1}s_c\cdots s_2$ are the Grassmannian permutations associated to the partitions $(a, 0), (b, 0)$ and $(c, d)$ respectively.
   By Proposition \ref{propreduction}, we have
   $$
       N_{u, v}^w =N_{us_1,v}^{ws_1}=N_{us_1, vs_2}^{ws_1s_2}=N_{u, vs_2}^{ws_1s_2s_1}=N_{us_2, vs_2}^{ws_1s_2s_1s_2}
                  =N_{s_{a+1}\cdots s_3, s_{b+1}\cdots s_3}^{s_d\cdots s_2 s_{c+1}\cdots s_3}.
$$
The elements ${s_{a+1}\cdots s_3, s_{b+1}\cdots s_3}, {s_d\cdots s_2 s_{c+1}\cdots s_3}$ are Grassmannian permutations associated to the partitions $(a-1, 0, 0), (b-1, 0,0)$ and $(c-1, d-1, 0)$ for $Gr(3, n)$.
By induction and using Proposition \ref{propreduction} repeatedly, we conclude that $N_{\lambda, \mu}^\eta=N_{u, v}^w$ is equal to the Schubert structure constant
   $N_{\lambda', \mu'}^{\eta'}$ for $Gr(d+2, n)$ with $\lambda'=(a-d, 0,\cdots, 0), \mu'=(b-d, 0, \cdots, 0)$ and $\eta'=(c-d, 0, \cdots, 0)$. Hence, the statement follows from Proposition \ref{propspeciala0Gr}.
\end{proof}


\subsection{Discussions for complete flag varieties}

A face of the Gelfand-Celtin polytope $\Delta$ for complete flag variety $F\ell_n$   is called a (\textit{dual}) \textit{Kogan face}, if all the equalities are of the form $\lambda^{(i)}_j=\lambda^{(i-1)}_j$ (resp. $\lambda^{(i)}_j=\lambda^{(i+1)}_{j+1}$). In other words, a (dual) Kogan face $F$ (resp. $F^*$) is indexed by a set of horizontal (resp. vertical) effective edges of the ladder diagram $\Lambda$ (which were defined in terms of the dual graph of $\Lambda$ in \cite{KST}).
For each edge on the bottom (resp. right) of a box on the $i$th row (resp. column) where $1\leq i\leq n-1$, we assign the simple reflection $s_i$. Then every Kogan face $F$   defines a permutation $w(F)$  by going from bottom to top   and then by going from left to right among those edges of $\Lambda$ that define  $F$. Similarly,   every dual Kogan face $F^*$   defines a permutation $w(F^*)$  by going from left to right and then by going from bottom to top   among those edges  defining $F^*$. A (dual) Kogan face is called reduced, if the resulting expression is a reduced decomposition of   the associated permutation.
For example, the first and third faces in Figure \ref{figureKoganfaces} are reduced, while the second Kogan face is not.
Recall that we denote by  $X_F$   the toric subvariety of $X_{\Delta}$ defined by the face $F$ of $\Delta$, and notice that the number of edges defining a (dual) Kogan face equals the codimension of the face in the Gelfand-Cetlin polytope.  Thanks to this identification, we have the following.
\begin{figure}[h]
  \caption{(dual) Kogan faces and the associated permutations}\label{figureKoganfaces}
  \bigskip
   \includegraphics[scale=1]{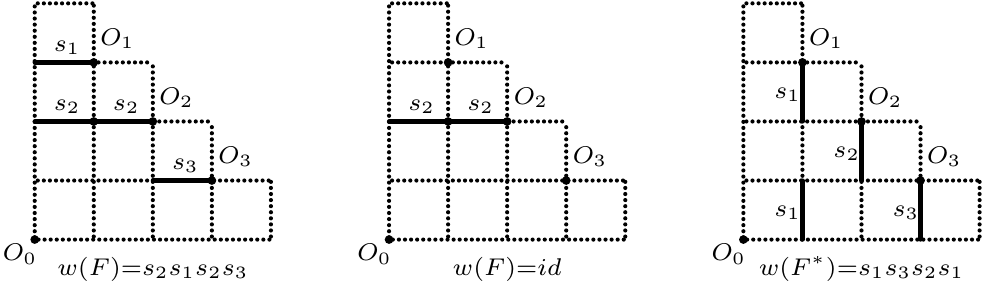}
   \end{figure}

\begin{prop}[Theorem 8 and Remark 10 of \cite{Kogan}]\label{propKMtoricSchub}
  The flat family $\mathcal{X}$ contains two kinds of flat subfamilies, giving the flat degeneration of (opposite) Schubert varieties $X_u$, $X^v$ respectively to the reduced union of toric subvarieties of reduced (dual) Kogan faces {\upshape $$\sum_{w(F)=w_0u}X_F,\qquad \sum_{w(F^*)=v}X_{F*}.$$ }
\end{prop}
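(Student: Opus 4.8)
The plan is to reduce the two assertions to one via the $w_0$-symmetry, to realize the Schubert variety as a flat subfamily of $\mathcal{X}$ cut out by Pl\"ucker hyperplanes, and finally to identify the toric central fiber with the reduced union over (dual) Kogan faces. For $F\ell_n$ one has $X_u=w_0X^{w_0u}$, and the $w_0$-action reflects the ladder diagram, reversing every positive path and thereby interchanging horizontal with vertical effective edges, hence Kogan faces with dual Kogan faces; matching the two permutation-label conventions under this reflection is a direct combinatorial check. So the two degenerations are equivalent, and I would treat only the opposite Schubert varieties $X^v$, aiming to prove that $X^v$ degenerates to the reduced union $\sum_{w(F^*)=v}X_{F^*}$.

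First I would set up the flat subfamily. Writing $X^v=X\cap\bigcap_{I\in\mathcal{I}_v}\{p_I=0\}$ scheme-theoretically (the standard Pl\"ucker description, cf.\ \cite[section 2.10]{BiLa}), where $\mathcal{I}_v$ collects the Pl\"ucker indices vanishing on $X^v$, set $\mathcal{X}^v:=\mathcal{X}\cap\bigcap_{I\in\mathcal{I}_v}\{p_I=0\}$. Exactly as in the proof of Theorem \ref{thmToricDeg}, the substitution $p_I\mapsto t^{N_I}p_I$ preserves each coordinate hyperplane and exhibits $\mathcal{X}^v|_{\mathbb{C}^*}\cong X^v\times\mathbb{C}^*$ as flat over $\mathbb{C}^*$; by Proposition \ref{propHart} it admits a unique flat extension $\overline{\mathcal{X}^v|_{\mathbb{C}^*}}\subseteq\mathcal{X}^v$, whose central fiber $Z_0$ has dimension $\dim X^v={n\choose 2}-\ell(v)$.

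Next I would read off the toric fiber. By Proposition \ref{proptoricdivisor} and Theorem \ref{thmToricDeg}, the scheme $\mathcal{X}^v|_{t=0}=X_0\cap\bigcap_{I\in\mathcal{I}_v}\{p_I=0\}$ is a union of toric subvarieties $X_F$, one face $F=\bigcap_I F_{e_I}$ for each choice of an effective edge $e_I\subset\pi_I$. Its components of top dimension ${n\choose 2}-\ell(v)$ are those cut out by exactly $\ell(v)$ vertical effective edges, and by the dictionary between such edge sets and words read left-to-right then bottom-to-top these are precisely the \emph{reduced} dual Kogan faces $F^*$ with $w(F^*)=v$. Since $Z_0\subseteq\mathcal{X}^v|_{t=0}$ has dimension ${n\choose 2}-\ell(v)$, it is supported on a union of such top-dimensional faces.

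It remains to show that $Z_0$ is reduced and captures every reduced dual Kogan face labelled $v$. Reducedness is inherited: under the bijection $\phi$ of the previous section the ideal of $\mathcal{X}^v|_{t=0}$ is of the square-free type described in Proposition \ref{proptoricdivisor}, so the union of faces carries reduced structure, whence a union of its top-dimensional components is reduced. The fullness --- that no reduced Kogan component is dropped in passing from $\mathcal{X}^v|_{t=0}$ to the flat limit $Z_0$, which a priori may also contain lower-dimensional faces --- is the main obstacle. I would resolve it by degree bookkeeping: flatness forces $\deg Z_0=\deg X^v$ in the Pl\"ucker embedding, while the identity $\deg X^v=\sum_{w(F^*)=v}\deg X_{F^*}$ (equivalently, the identification of the initial ideal of the Schubert ideal with the radical Stanley--Reisner-type ideal of the union) saturates exactly the reduced dual Kogan faces with label $v$. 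This last identity is the combinatorial content of \cite[Theorem 8 and Remark 10]{KoMi} after the substitution $\lambda^{(i)}_j=\lambda_{n+1-i,j}$, $b_{ij}=a_{n+1-i,j}$ recorded in the previous section, and I expect it, together with reducedness, to be the crux.
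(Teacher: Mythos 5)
First, note that the paper itself offers no proof of this statement: the bracketed attribution means it is imported wholesale from \cite{Kogan} (cf.\ also \cite{KoMi}), and the paper only uses it. So the question is whether your argument is self-contained, and it is not. Your framework --- cutting $\mathcal{X}$ by the Pl\"ucker hyperplanes defining $X^v$, observing flatness over $\mathbb{C}^*$, extending by Proposition \ref{propHart}, and reading the naive central fiber set-theoretically as a union of faces via Proposition \ref{proptoricdivisor} --- is correct and mirrors the paper's own technique in Theorem \ref{thmToricDeg} and Proposition \ref{propTorDegSchuGrass}. But everything that makes the statement true is concentrated in the steps you defer: (i) that among all faces $\bigcap_I F_{e_I}$ arising from choices of effective edges $e_I\subset\pi_I$, the top-dimensional ones are exactly the reduced dual Kogan faces with $w(F^*)=v$ (why only vertical edges, and why reduced words, is never argued); (ii) that the flat limit $Z_0$ is reduced and contains \emph{every} such face; and (iii) the degree identity $\deg X^v=\sum_{w(F^*)=v}\deg X_{F^*}$. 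You resolve (ii) and (iii) by invoking ``the combinatorial content of \cite[Theorem 8 and Remark 10]{KoMi}'' --- but that \emph{is} the proposition being proved. The argument is circular at its crux: in the source, the identity (iii) is not an input but the output of a genuine argument (via Gr\"obner degenerations of Schubert varieties in the spirit of \cite{KnMi}).

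Second, two individual steps would fail as written. The $w_0$-reduction: $X_u=w_0X^{w_0u}$ holds inside $X=\mathcal{X}|_{t=1}$, but the family $\mathcal{X}$ is \emph{not} preserved by $p_I\mapsto\pm p_{w_0(I)}$ --- the exponents $N_I=\sum_r(2n)^{n-r}i_r$ attach the weight $(2n)^{n-r}$ to the $r$-th entry of $I$, so $N_{w_0(I')}+N_{w_0(J')}-N_{w_0(I)}-N_{w_0(J)}\neq N_{I'}+N_{J'}-N_I-N_J$ in general; hence the flat limit of $X_u$ inside $\mathcal{X}$ cannot be obtained by ``reflecting'' the flat limit of $X^{w_0u}$, and the two halves of the statement genuinely require parallel but separate treatments (which is exactly why the source states them separately as Theorem 8 and Remark 10). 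Reducedness: it is not ``inherited.'' A closed subscheme of a reduced scheme need not be reduced, and flat limits notoriously acquire embedded or non-reduced structure; moreover Proposition \ref{proptoricdivisor} only establishes reducedness for a \emph{single} coordinate hyperplane section of $X_0$, and for intersections of many such hyperplanes the corresponding quotient of the Hibi-type ring need not a priori be radical. So neither the support statement, nor reducedness, nor completeness of the list of components is actually established by your argument; what remains proved is only the (correct) reduction to these three points.
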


\begin{example}\upshape Let us consider the Schubert structure constant $N_{\lambda, \mu}^\eta$ of $Gr(3, 6)$ where $\lambda=\mu=(2,1,0)$ and $\eta=(3,2,1)$.
As $X_\eta=w_0X^\mu$, this is the most complicated case in the sense that it counts the cardinality of the triple self-intersection  $g_1X^\mu\cap g_2X^\mu\cap w_0X^\mu$ up to a generic translation.
 For any $u, v\in S_6$, it follows from direct calculations that  $uX^\mu\cap vX^\mu\cap X_\eta$ is of positive dimension, and hence it cannot be a transversal intersection.
Nevertheless, we have the following equalities.  The first equality
 is simply the identification of permutations with Grassmanniannian permutations and the natural treatment of Structure constants for $Gr(3, 6)$ as that for $F\ell_6$. The seventh equality applies  identity (2) in section 4.2, where we notice $w_0s_iw_0=s_{6-i}$ in this example. The eighth equality uses the fact $\sigma^{s_2}\cup \sigma^{s_4}=\sigma^{s_2s_4}$. The remaining equalities follow from  Proposition \ref{propreduction}.
\begin{align*}
   N_{\lambda, \mu}^\eta=N_{s_2s_4s_3, s_2s_4s_3}^{s_3s_5s_4s_1s_2s_3}&=N_{s_2s_4s_3, s_2s_4s_3s_1}^{s_3s_5s_4s_1s_2s_3s_1}\\
                          &=N_{s_2s_4s_3, s_2s_4s_3s_1s_2}^{s_3s_5s_4s_1s_2s_3s_1s_2}\\
                          &=N_{s_2s_4, s_2s_4s_3s_1s_2}^{s_3s_5s_4s_2s_3s_1s_2}\\
                           &=N_{s_2s_4, s_2s_4s_3s_1s_2s_3}^{s_3s_5s_4s_2s_3s_1s_2s_3}\\
                            &=N_{s_2s_4, s_2s_4s_3s_1s_2s_3s_1}^{s_3s_5s_4s_2s_3s_1s_2s_3s_1}\\
                            &=N_{s_4s_2, s_4s_2s_3s_5s_4s_3s_5}^{s_3s_1s_2s_4s_3s_5s_4s_3s_5}\\
                            &=N_{s_4, s_2, s_4s_2s_3s_5s_4s_3s_5}^{s_3s_1s_2s_4s_3s_5s_4s_3s_5}.
\end{align*}
\noindent  Denote $v=s_4s_2s_3s_5s_4s_3s_5$ and $w=s_3s_1s_2s_4s_3s_5s_4s_3s_5$.
There is a unique subexpression $s_2s_3s_4s_5s_3s_4s_3$ of $w_0=s_1s_2s_3s_4s_5s_1s_2s_3s_4s_1s_2s_3s_1s_2s_1$ of length $7$ whose product equals $v$, given by the $\{2,3,4,5,8,9,12\}$th positions. This gives the unique reduced dual Kogan face $F^*$ together with the toric subvariety $X_{F^*}$, to which    $X^{v}$ degenerates.
There is a unique subexpression of $w_0=s_5s_4s_3s_2s_1s_5s_4s_3s_2s_5s_4s_3s_5s_4s_5$ of length $(15-9)$ whose product equals $w_0w$, given by the $\{2,3,4,5,8,9\}$th positions. This gives the unique reduced Kogan face $F$ together with the toric subvariety $X_{F}$, to which $X_{w}$ degenerates. The faces $F^*$, $F$ are precisely given by $\Delta(\mbox{id}, v)$ and $\Delta(w_0, w_0w)$ respectively in Figure \ref{figuredimTwoSch}. Let $\tilde u$ (resp. $\tilde v$) denote the Grassmannian permutation  associated to   the positive path $\pi_{\{2, 3\}}$ (resp. $\pi_{\{1,4,5,6\}}$).
The intersection of $F^*\cap F$ with $\Delta(\tilde u, s_2)$ (i.e. with the union of faces corresponding to the $W$-translated Schubert divisor $\tilde uX^{s_2}$) is the union of two one-dimensional faces of $\Delta$.
The intersection $F^*\cap F\cap\Delta(\tilde u, s_2)\cap \Delta(\tilde v, s_4)$ consists of two regular vertices of $\Delta$ for $F\ell_6$. Therefore it follows from Theorem \ref{thmTransInterviatoric} that
$\tilde u X^{s_2}\cap \tilde v X^{s_4}\cap X^v\cap X_w$ is a transversal intersection and consists of two regular vertices of $\Delta$, and consequently $N_{s_4, s_2, v}^w=2$.
   \begin{figure}[h]
  \caption{Intersections  corresponding to $\tilde u X^{s_2}\cap \tilde v X^{s_4} \cap X^v\cap X_w$ in $F\ell_6$}\label{figuredimTwoSch}
  \bigskip
     \includegraphics[scale=1]{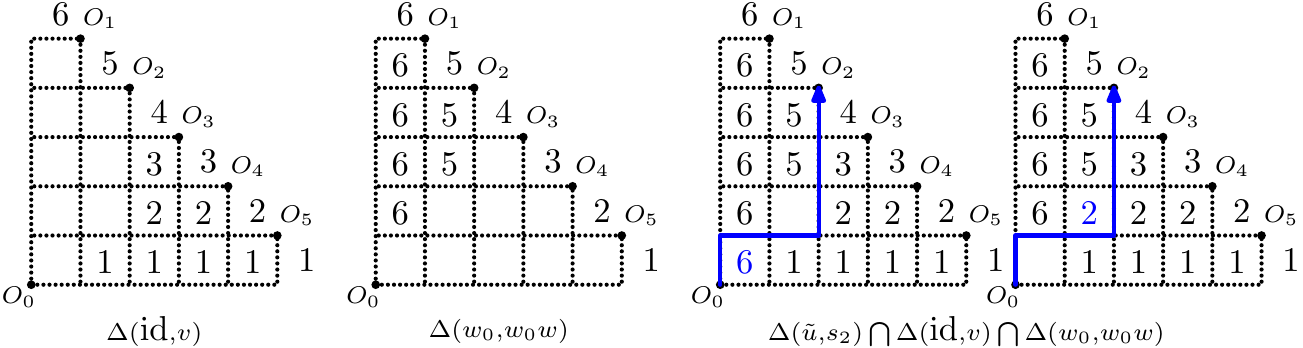}
     \begin{center}
              \includegraphics[scale=1]{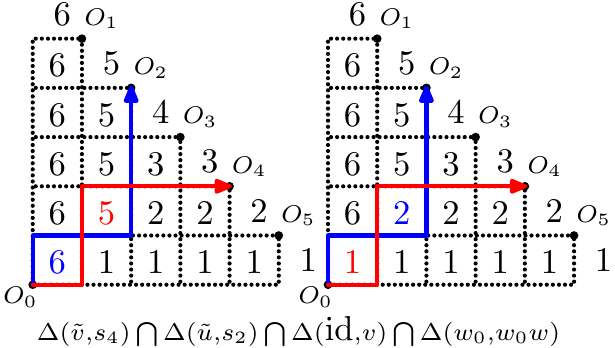} \end{center}

  \end{figure}
\end{example}

\begin{example}{\upshape
   Let $X=F\ell_4$. It suffices to consider the Schubert structure constants $N_{u, v}^w$ where $u, v, w\in S_4$ satisfying $\ell(w)=\ell(u)+\ell(v)$ and $u\leq w, v\leq w$ with respect to the Bruhat order. The case for $u=\mbox{id}$ or $v=\mbox{id}$ is trivial.  Noting that $X^u=w_0X_{w_0u}$ is of dimension $(6-\ell(u))$, we have $N_{u, v}^w=N_{w_0w, v}^{w_0u}$. Thus   we can also assume $\ell(w)\leq 4$ and use the      identity (2) (i.e. $N_{u, v}^{w}=N_{w_0uw_0, w_0vw_0}^{w_0ww_0}$) in section 4.2, where $w_0uw_0$ is obtained from an reduced expression of $u$ by interchanging $s_1$ and $s_3$.
    Then we do a further reductions by using the  Proposition \ref{propreduction}.   It turns out that all the remaining $N_{u, v}^w$ are either equal to $0$ (for instance we have $N_{s_1, s_1s_2s_1}^{s_2s_3s_1s_2}=N_{s_1, s_2s_1}^{s_2s_1s_3}=0$) or reduced to one of the following cases.     }
\end{example}
\begin{enumerate}

   \item  $N_{s_1, s_2s_3s_2}^{s_2s_3s_1s_2}=N_{s_1, s_2s_3}^{s_2s_3s_1}=N_{s_1, s_2}^{s_2s_1}=N_{s_1, s_2s_3}^{s_2s_1s_3}=N_{s_1s_2, s_2s_3}^{s_2s_1s_3s_2}$;

     \noindent  $N_{s_1, s_2s_3s_2}^{s_1s_2s_3s_2}=N_{s_1, s_2s_3}^{s_1s_2s_3}=N_{s_1, s_2}^{s_1s_2}=N_{s_1, s_2s_3}^{s_1s_2s_3}=N_{s_1s_2, s_2s_3}^{s_1s_2s_3s_2}$;
  \begin{align*}
      N_{s_1, s_3s_1s_2}^{s_3s_1s_2s_1}&=N_{s_1, s_3s_1}^{s_3s_2s_1}=N_{s_1s_2, s_3s_1}^{s_3s_2s_1s_2}\\&=N_{s_1s_2, s_3s_1}^{s_3s_2s_1s_2}=N_{s_1s_2, s_3}^{s_3s_1s_2}=N_{s_1, s_3}^{s_1s_3}=N_{s_1, \scriptstyle\rm id}^{s_1}=N_{s_1, s_2}^{s_1s_2};
  \end{align*}
  $N_{s_2, s_2s_3s_1}^{s_2s_3s_1s_2}=N_{{\rm id}, s_2s_3s_1}^{s_2s_3s_1}=N_{{\rm id}, s_2}^{s_2}=N_{s_1, s_2}^{s_2s_1}$. 

  \item $\mbox{}$
 \vspace{-0.5cm}
  \begin{align*}
 N_{s_1, s_3s_1s_2}^{s_2s_3s_1s_2}&=N_{s_1, s_3s_1}^{s_2s_3s_1}=N_{s_1, s_1}^{s_2s_1}=N_{s_1,s_1s_2}^{s_2s_1s_2}=N_{s_1, s_1s_2s_3}^{s_2s_1s_2s_3}\\
             &=N_{s_1s_2, s_3s_1}^{s_2s_3s_1s_2}\qquad\quad\,\,\!=N_{s_1s_3,s_1s_2}^{s_2s_1s_2s_3},
  \end{align*}
  $$N_{s_1, s_3s_1s_2}^{s_2s_3s_1s_2}=N_{s_1s_3, s_3s_1s_2}^{s_2s_3s_1s_2s_3}=N_{s_1s_3, s_3s_1}^{s_2s_1s_3s_2},\quad N_{s_1, s_1s_2s_3}^{s_2s_1s_2s_3}=N_{s_1s_2, s_1s_2s_3}^{s_2s_1s_2s_3s_2}=N_{s_1s_2, s_1s_2}^{s_2s_1s_3s_2};$$
       \begin{align*}
       N_{s_1, s_1s_2s_1}^{s_3s_1s_2s_1}&=N_{s_1, s_2s_1}^{s_3s_2s_1}=N_{s_1s_2, s_2s_1}^{s_3s_2s_1s_2}=N_{s_1s_2s_3, s_2s_1}^{s_3s_2s_1s_2s_3}=N_{s_1s_2s_3, s_2}^{s_3s_1s_2s_3} \\
                &=N_{s_1s_3, s_2s_1}^{s_3s_2s_1s_3}=N_{s_1s_3s_2, s_2s_1}^{s_3s_2s_1s_3s_2}=N_{s_1s_3s_2, s_2}^{s_2s_3s_1s_2}.\\
    \end{align*}
   \vspace{-1cm}
     \item  $N_{s_1s_2, s_3s_2}^{s_2s_1s_3s_2}$;
   \begin{align*}
      N_{s_2, s_2s_3s_1}^{s_1s_2s_3s_1}&=N_{s_2, s_2s_3}^{s_1s_2s_3}=N_{s_2, s_2}^{s_1s_2}=N_{s_2s_3, s_2}^{s_1s_2s_3}=N_{s_2s_3, s_2s_1}^{s_1s_2s_3s_1}=N_{s_2s_3, s_2s_1s_2}^{s_1s_2s_3s_1s_2}=N_{s_2, s_2s_1s_2}^{s_2s_1s_3s_2}\\
          &=N_{s_2, s_2s_3s_2}^{s_1s_2s_3s_2}.
   \end{align*}

\end{enumerate}
In case (1), it is reduced to the trivial case $N_{{\rm id}, w}^w=1$. In case (2), it is reduced to the Schubert structure constants $N_{s_1, s_1}^{s_2s_1}$ and $N_{s_1, s_2s_1}^{s_3s_2s_1}$ for $Gr(1, 4)$, and hence is done by Remark \ref{rmkGr1n}. In case (3),
it is reduced to the Schubert structure constants $N_{s_2, s_2}^{s_1s_2}$ and $N_{s_1s_2, s_3s_2}^{s_2s_1s_3s_2}$ for $Gr(2, 4)$, and hence is done by Propositions \ref{propsp1Grk} and \ref{propPierGr2n}.
The above identities result  in a consistent relation on the set $A$. Without going further to the modified partition $\hat{\mathcal{U}}$, we   have already verified the transversality of the corresponding triple intersections. Therefore  Conjecture \ref{mainconj00} holds for $F\ell_4$.

\section{An anti-canonical divisor $-K_{X}$}
As an   application of  $W$-translated Schubert divisors, we specify an anti-canonical divisor $-K_X$ of $X=F\ell_{n_1, \cdots, n_k; n}=SL(n, \mathbb{C})/P$,
which  may have potential applications in the study of
 Strominger-Yau-Zaslow
 mirror symmetry for   $X$.  For instance for $X=F\ell_{1, n-1; n}$, a special Lagrangian fibration for the open Calabi-Yau manifold $X\setminus -K_X$ was constructed in \cite{CLL} with respect to such $-K_X$.

 The  anti-canonical divisor class 
 is given by    (see \cite[Lemma 3.5]{FuWo} for the formula of $c_1(T_{G/P})$ for general Lie types)
 \begin{align*}
    [-K_X]=c_1(T_X)=\sum_{i=1}^k(n_{i+1}-n_{i-1})\sigma^{s_{n_i}}.
\end{align*}

\noindent By Proposition \ref{propWSchubertdivisor}, all positive paths towards $O_i$ represent the same Schubert divisor class $\sigma^{s_{n_i}}$.
Now we introduce the following notion, in order to   specify a representative of $[-K_X]$ with a property compatible with the toric degeneration of $X$.
\begin{defn}
  Let  $e$ be an edge   on the roof of $\Lambda$. We define the \textbf{special path} associated with $e$ to be the (unique)  positive path $p$ with the fewest corners among
   those positive paths having a corner  containing  $e$.
  \end{defn}

\begin{example}
  In Figure \ref{PPforFl}, $\pi''$ is the special path    associated to the   edge $e'$, while $\pi'$ is not.
\end{example}

\begin{remark}    The special paths defined above correspond to the   partitions   of the zero rectangle or \textit{extremal}  rectangles  in \cite{MaRi}, i.e., rectangles whose length equal to $m$ or width equal to $n-m$. 
 \begin{figure}[h]
  \caption{Special paths and their corresponding   partitions for $Gr(4,7)$}\label{PTforFl}
  \bigskip
  \begin{align*}
    &\hspace{0.13cm} \pi_{1237} \hspace{1.15cm} \pi_{1267}\hspace{1.0cm} \pi_{1567}\hspace{1.01cm} \pi_{4567}\hspace{1.0cm} \pi_{3456} \hspace{1cm} \pi_{2345}\hspace{1cm} \pi_{1234}
\end{align*}

      \includegraphics[scale=0.6]{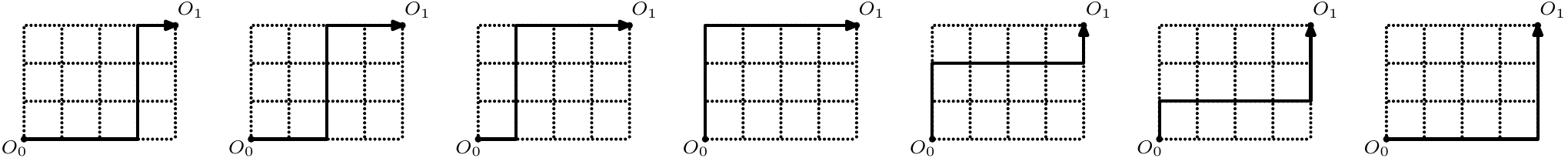}
  \begin{align*}
    &\hspace{0.33cm} \scriptstyle(3,0,0,0) \hspace{0.9cm} (3,3,0,0)\hspace{0.9cm} (3,3,3,0)\hspace{1.01cm} (3,3,3,3)\hspace{0.7cm} (2,2,2,2) \hspace{0.7cm} (1,1,1,1)\hspace{0.7cm} (0,0,0,0)
\end{align*}

          \includegraphics[scale=1]{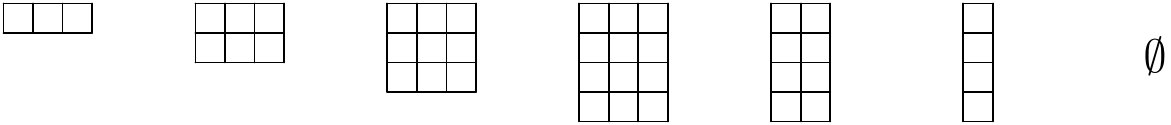}
 \end{figure}

\end{remark}

We observe that there are exactly $\sum_{i=1}^k(n_{i+1}-n_{i-1})$ edges on the roof of $\Lambda$. Furthermore
for each $1\leq i\leq k$, there are precisely $(n_{i+1}-n_{i-1})$ special paths towards $O_i$. It follows immediately that
\begin{prop-defn}\label{propAntiCano} An anti-canonical divisor of $X$ 
is given by
\begin{equation}
   -K_X:=\sum_{\pi_I\scriptsize\mbox{ is a special path}} D_{p_I}
\end{equation}
 Moreover,  $-K_X$  degenerates to the anti-canonical divisor $-K_{X_0}$ of the Gelfand-Cetlin toric variety $X_{\Delta}$ along the flat subfamily
  $\{\prod_{\pi_I\scriptsize\mbox{ is a special path}} p_I =0\}\bigcap \mathcal{X}$.
\end{prop-defn}
\noindent Here  $-K_{X_0}$ denotes the canonical representative of the anti-canonical divisor class of the toric variety  $X_{\Delta}$, given by the sum
 $\sum_{e \scriptsize \mbox{ is an effective edge of } \Lambda}  X_{F_e}$ of toric divisors.
The second statement is a direct consequence of Theorem \ref{thmToricDeg} together with the definition of  $-K_X$.

\begin{remark}
 The above $-K_X$  coincides with the anti-canonical divisor $-K'$ given by the sum of projected Richardson hypersurfaces by Knutson, Lam and Speyer \cite{KLS} in the cases of complex flag varieties and complex Grassmannians, while they are different in general. Indeed, the number of irreducible divisors in $-K_X$ is $n+n_k-n_1=\sum_{i=1}^{k}(n_{i+1}-n_{i-1})$, while that for $-K'$ is equal to $n+k-1$.
 \end{remark}

\section*{Acknowledgements}

The authors would like to thank    Cheol-Hyun Cho,    Leonardo C. Mihalcea, Ezra Miller, Ziv Ran, Vijay Ravikumar and Chi Zhang  for useful discussions and helpful comments. The authors also thank the anonymous referees for their very helpful comments.  D. Hwang  was supported by the Samsung Science and Technology
Foundation under Project SSTF-BA1602-03.   J. Lee was supported by the National Research Foundation of Korea (NRF)
grant funded by the Korea government (MSIT) (NRF-2019R1F1A1058962). 
 C. Li  was  supported by National Natural Science Foundation of China (Grants No.  11771455, 11822113, 11831017) and Guangdong Introducing Innovative and Enterpreneurial Teams No. 2017ZT07X355.

\end{document}